\numberwithin{equation}{section}
\theoremstyle{plain}
	\newtheorem{theo}[equation]{Theorem}
	\newtheorem{prop}[equation]{Proposition}
	\newtheorem{lemm}[equation]{Lemma}
	\newtheorem{corr}[equation]{Corollary}
	\newtheorem{lem/defn}[equation]{Lemma/Definition}
\theoremstyle{definition}
	\newtheorem{defi}[equation]{Definition}
	\newtheorem*{ackn}{Acknowledgements}
		\newtheorem*{outl}{Outline}
	\newtheorem{exam}[equation]{Example}
	\newtheorem{hypo}[equation]{Hypothesis}
	\newtheorem{cons}[equation]{Construction}
	\newtheorem{caut}[equation]{Caution}
	\newtheorem{nota}[equation]{Notation}
\theoremstyle{remark}
	\newtheorem{rema}[equation]{Remark}
	\newtheorem{var}[equation]{Variant}
\def\nc{\newcommand}
\def\on{\operatorname}
\def\co{\colon\thinspace}
\newcommand{\RomanNumeralCaps}[1]
    {\MakeUppercase{\romannumeral #1}}
\newcommand*{\rom}[1]{\expandafter\@slowromancap\romannumeral #1@}
\nc{\edit}[1]{\marginpar{\footnotesize{#1}}}
\newcommand{\lv}{\lvert}
\newcommand{\rv}{\rvert}
\nc{\Z}{\mathbb{Z}}
\nc{\z}{\mathbb{Z}}
\nc{\zp}{\mathbb{Z}_p}
\nc{\zpl}{\mathbb{Z}_{(p)}}
\nc{\PP}{\mathbb{P}}
\nc{\R}{\mathbb{R}}
\nc{\ot}{\otimes}
\nc{\I}{\mathbb{I}}
\nc{\f}{\mathbb{F}}
\nc{\fp}{\mathbb{F}_p}
\nc{\hz}{H\mathbb{Z}}
\nc{\hzp}{H\mathbb{Z}_p}
\nc{\hzpl}{H\mathbb{Z}_{(p)}}
\nc{\fq}{\mathbb{F}_q}
\nc{\fpn}{\mathbb{F}_{p^n}}
\nc{\wfq}{W(\mathbb{F}_q)}
\nc{\wfpn}{W(\mathbb{F}_{p^n})}
\nc{\pmot}{\frac{p-1}{2}}
\nc{\sph}{\mathbb{S}}
\nc{\sphp}{\mathbb{S}_{p}}
\nc{\sphpl}{\mathbb{S}_{(p)}}
\nc{\sphwq}{\mathbb{S}_{W(\mathbb{F}_q)}}
\nc{\sphwpn}{\mathbb{S}_{W(\mathbb{F}_{p^n})}}
\nc{\sphwk}{\mathbb{S}_{W(k)}}
\nc{\sphxk}{\sph[x_k]}
\nc{\sphxm}{\sph[x_{m}]}
\nc{\sphxmk}{\sph[x_{mk}]}
\nc{\sphsk}{\sph[\sigma_k]}
\nc{\sphsmk}{\sph[\sigma_{mk}]}
\nc{\sphsell}{\sph[\sigma_\ell]}
\nc{\sphpsk}{\sph_{(p)}[\sigma_k]}
\nc{\sphpsmk}{\sph_{(p)}[\sigma_{mk}]}
\nc{\sphpsell}{\sph_{(p)}[\sigma_\ell]}
\nc{\musgmpn}{MU[\sigma_{2p^n-2}]}
\nc{\muallspi}{MU[\sigma_{2p^i-2} \mid i< n]}
\nc{\muallspinpo}{MU[\sigma_{2p^i-2} \mid i< n+1]}
\nc{\wdgmuallspi}{\wedge_{MU[\sigma_{2p^i-2} \mid i< n]}}
\nc{\thhmuallspi}{\ensuremath{\textup{THH}^{MU[\sigma_{2p^i-2} \mid i<n]}}}
\nc{\musi}{MU[\sigma_i]}
\nc{\musk}{MU[\sigma_k]}
\nc{\muspi}{MU[\sigma_{2p^i-2}]}
\nc{\sgmk}{\sigma_k}
\nc{\sgmmk}{\sigma_{mk}}
\nc{\sgml}{\sigma_{\ell}}
\nc{\sgmpi}{\sigma_{2p^i-2}}
\nc{\xrtmx}{X(\sqrt[m]{x})}
\nc{\artmx}{A(\sqrt[m]{a})}
\nc{\hzplsk}{H\mathbb{Z}_{(p)}[\sigma_{k}]}
\nc{\hzplsmk}{H\mathbb{Z}_{(p)}[\sigma_{mk}]}
\nc{\wdgsphsmk}{\wdg_{\sph[\sigma_{mk}]}}
\nc{\wdgsphsk}{\wdg_{\sph[\sigma_{k}]}}
\nc{\wdgsphpsmk}{\wdg_{\sph_{(p)}[\sigma_{mk}]}}
\nc{\wdgsphpsk}{\wdg_{\sph_{(p)}[\sigma_{k}]}}
\nc{\bpn}{BP \langle n \rangle}
\nc{\hfp}{H\mathbb{F}_p}
\nc{\T}{\mathbb{T}}
\nc{\vo}{V(1)}
\nc{\vos}{V(1)_*}
\nc{\ttw}{T(2)}
\nc{\pis}{\pi_*}
\nc{\ttws}{T(2)_*}
\nc{\gdna}{\gdn(A\hmod)}
\nc{\gdnr}{\gdn(R\hmod)}
\nc{\wdg}{\wedge}
\nc{\wdgp}{\wedge_{\mathbb{S}_p}}
\nc{\wdgfp}{\wedge_{H\mathbb{F}_p}}
\nc{\wdgmu}{\wedge_{MU}}
\nc{\AAA}{\mathbb{A}}
\nc{\LL}{\mathbb{L}}
\nc{\OO}{\mathcal{O}}
\nc{\X}{\EuScript{X}}
\nc{\sZ}{\EuScript{Z}}
\nc{\id}{{\on{id}}}
\nc\cone{{\on{cone}}}
\nc{\Rep}{{\on{Rep}}}
\nc\Ob{{\on{Ob}}}
\nc\Spec{{\on{Spec}}}
\mathchardef\mhyphen="2D
\newcommand{\hmod}{\mhyphen\mathsf{Mod}}
\nc\coMod{{\on{coMod}}}
\nc\Perf{{\on{Perf}}}
\nc\End{{\on{End}}}
\nc{\into}{\hookrightarrow}
\nc{\tr}{\on{tr}}
\nc{\ev}{\on{ev}}
\nc{\im}{\on{im}}
\nc{\hfps}{H{\mathbb{F}_p}_*}
\nc{\Mot}{\on{Mot}}
\nc{\pt}{\on{pt}}
\nc{\coker}{\on{coker}}
\nc{\rk}{\on{rank}}
\nc{\TOP}{\on{Top}_{\mathbb{C}}^{s}}
\nc{\gr}{\on{Gr}}
\nc{\Catperf}{\text{Cat}^{\text{perf}}}
\nc{\Sym}{\on{Sym}}
\nc{\xra}{\xrightarrow}
\nc{\lra}{\xleftarrow}
\nc{\Bet}{\mathbf{Betti}_{X}}
\nc{\codim}{\on{codim}}
\nc{\Fred}{\on{Fred}}
\nc{\colim}{\on{colim}}
\nc{\KK}{{\bf K}}
\nc{\onto}{\twoheadrightarrow}
\nc{\A}{\mathbb{A}}
\nc{\Aff}{\on{Aff}}
\nc{\SH}{\on{SH}}
\nc{\QCoh}{\on{QCoh}}
\nc{\Alg}{\on{Alg}}
\nc{\alg}{\on{Alg}}
\nc{\lmod}{\on{LMod}}
\nc{\rmod}{\on{RMod}}
\nc{\Br}{\on{Br}}
\nc{\ta}{\widetilde{\a}}
\nc{\Shv}{\on{Shv}}
\nc{\GG}{\mathbb{G}}
\nc{\red}{\color{red}}
\nc{\blue}{\color{blue}}
\nc{\an}{\on{an}}
\nc{\D}{\on{D}}
\nc{\Pre}{\on{Pre}}
\nc{\assact}{\on{Ass}_{\on{act}}^{\otimes }}
\nc{\spact}{\on{Sp}_{\on{act}}^{\otimes }}
\nc{\spactprod}{{\on{Sp}^{\Z}_{\on{act}}}^{\otimes }}
\nc{\qc}{\on{qc}}
\nc{\op}{\on{op}}
\nc{\shEnd}{{\mathcal End}}
\nc{\Sph}{\mathbb{S}}
\nc{\Top}{\on{Top}}
\nc{\Map}{\on{Map}}
\nc{\Vect}{\on{Vect}}
\nc{\holim}{\on{holim}}
\newcommand{\cat}[1]{\ensuremath{\EuScript #1}}
\DeclareMathOperator{\map}{\ensuremath{\textup{Map}}}
\DeclareMathOperator{\thh}{\ensuremath{\textup{THH}}}
\newcommand{\thhmu}{\ensuremath{\textup{THH}^{MU}}}
\DeclareMathOperator{\dy}{\ensuremath{\textup{Day}}}
\DeclareMathOperator{\idy}{\ensuremath{\mathbb{I}_\textup{Day}}}
\DeclareMathOperator{\otdy}{\ensuremath{\otimes_\textup{Day}}}
\DeclareMathOperator{\gd}{\ensuremath{\textup{Gr}}}
\DeclareMathOperator{\grn}{\ensuremath{\textup{Gr}}_n}
\DeclareMathOperator{\ntc}{\ensuremath{\textup{TC}^-}}
\DeclareMathOperator{\tp}{\ensuremath{\textup{TP}}}
\DeclareMathOperator{\tc}{\ensuremath{\textup{TC}}}
\newcommand{\kth}{K}
\let\plusminus\pm
\renewcommand{\pm}{{\plusminus1}}
\newcommand{\kup}{ku_p}
\newcommand{
\ltnpo}{L_{T(n+1)}}
\DeclareMathOperator{\hh}{\ensuremath{\textup{HH}}}
\DeclareMathOperator{\hhfp}{\ensuremath{\textup{HH}^{\fp}}}
\newcommand{\hhhfps}{\ensuremath{\textup{HH}_*^{\fp}}}
\DeclareMathOperator{\hhhfp}{\ensuremath{\textup{HH}^{H\fp}}}
\newcommand{\hhfps}{\ensuremath{\textup{HH}^{\fp}_*}}
\nc{\C}{\cat C}
\nc{\V}{\cat V}
\def\A{\mathcal{A}}
\def\a{\alpha}
\def\Perf{\on{Perf}}
\def\Sp{\on{Sp}}
\def\sp{\on{Sp}}
\def\V{\EuScript{V}}
\nc{\W}{\mathbb{W}}
\def\QCoh{\on{QCoh}}
\title{Adjunction of roots, algebraic $\kth$-theory and chromatic redshift} 
\author{Christian Ausoni, Haldun Özgür Bayındır, Tasos Moulinos}
\begin{document}
\maketitle
\begin{abstract}
Given an $E_1$-ring $A$ and a class $a \in \pi_{mk}(A)$
satisfying a suitable hypothesis, we define 
a map of $E_1$-rings $A\to \artmx$ realizing
the adjunction of an $m$th root of $a$.
We define a form of logarithmic THH for $E_1$-rings, and show that 
root adjunction is log-THH-\'etale for suitably tamely ramified extension,
which provides a formula for $\thh(\artmx)$ in terms of THH and log-THH of $A$.
If $A$ is connective, we prove that the induced map
$\kth(A) \to \kth(\artmx)$ in algebraic $\kth$-theory 
is the inclusion of a wedge summand. Using this, we obtain $V(1)_*\kth(ko_p)$ for $p>3$ and also, we deduce that if $\kth(A)$
exhibits chromatic redshift, so does $\kth(\artmx)$.
We interpret several extensions of ring spectra as examples of root
adjunction, and use this to obtain a new proof of the fact that Lubin-Tate spectra satisfy the redshift conjecture.
\end{abstract}

\section{Introduction}

Let $A$ be an $E_1$-algebra spectrum, and let $a \in \pi_{mk}(A)$ with $m>0$ and
even $k\geq0$.  In this paper, we define under a certain Hypothesis~\ref{hypo
root adjunction}, an $E_1$-algebra extension $A\to \artmx$ realizing the
adjunction of an $m$th-root of $a$ in homotopy rings, \[\pis A\to \pis
 \big(\artmx\big)\cong \pis(A)[z]/(z^m-a)  ,\] and then study how the algebraic
$K$-theory of $\artmx$, or its topological Hochschild and cyclic homology,
relates to that of $A$.  Hypothesis~\ref{hypo root adjunction} holds for example
if $A$ is an $E_2$-ring for which $\pis A$ is concentrated in even degrees. 

In general, the existence of a suitable root adjunction to a ring spectrum and its
effect on such invariants is an intriguing question; for example it has been shown by Schwänzl,
Vogt and Waldhausen~\cite[Proposition~2]{schwanzl1998adjroots}, precisely by considering
topological Hochschild homology, that it is not possible, in $E_\infty$-ring spectra, 
to adjoin a fourth root of unity $i$ to the sphere spectrum $\sph$ (in a sense made
precise in loc.~cit.). Nevertheless, Lawson~\cite{lawson2020adjoining} introduces a construction 
that allows, under some assumption, to adjoin roots of a homotopy degree zero unit
in $E_\infty$ ring spectra.

For classes in positive homotopy degrees, examples exist and have shown to be relevant, in
particular in studying redshift for algebraic $K$-theory. 
We have the Adams splitting of connective complex
$K$-theory $ku$ completed at an odd prime $p$,
$$
ku_p \simeq \bigvee_{1\leq i<p-1} \Sigma^{2i} \ell_p\,,
$$
and the extension $\ell_p\to ku_p$ can be interpreted, on homotopy rings, as the root adjunction
$$
\pis \ell_p \cong \Z_p[v_1]\to \Z_p[u]\cong \pis ku_p\,,
$$
were $v_1\mapsto u^{p-1}$, giving \[\pis ku_p\cong(\pis
\ell_p)[u]/(u^{p-1}-v_1).\] Sagave showed
in~\cite[4.15]{sagave2014logarithmistrctsconcomplexkthry} that
$ku_p$ can be constructed as an extension of $\ell_p$, establishing
how $\ell_p\to ku_p$ qualifies as a tamely ramified extension in
$E_\infty$-rings. 
In \cite{ausonirogneskthryoftopologicalkthry,ausoni2010kthryofcomplexkthry},
Rognes and the first author had computed the algebraic $K$-theory of $\ell_p$ and
$ku_p$ with coefficients in a Smith-Toda complex $V(1)=\sph/(p,v_1)$, for $p\geq5$. 
Taking $T(2)=V(1)[v_2^{-1}]$, one has the formula
$$
T(2)_* \kth(ku_p)\cong \big(T(2)_* \kth(\ell_p)\big)[b]/(b^{p-1}+v_2)
$$
relating the two computations, hinting at a chromatic shift (or redshift) of
this tamely ramified root adjunction.

After our construction of root adjunction in $E_1$-rings, we offer an investigation of how 
the obtained extension is reflected in algebraic $K$-theory.
In particular, we have the following spectrum-level splitting of algebraic
$K$-theory in the tamely ramified case, which applies to a wide array of examples.

\begin{theo}[Theorem \ref{theo root adj k theory inclusion}]\label{theo INTRO root adj k theory inclusion}
Assume Hypothesis \ref{hypo root adjunction} with $p\nmid m$ and $\lv a \rv >0$.
Furthermore, assume that $A$ is $p$-local and connective. In this situation, the
map in algebraic $K$-theory \[\kth(A) \to \kth(\artmx)\]
induced by the extension $A\to \artmx$ is the inclusion of a wedge summand.
\end{theo}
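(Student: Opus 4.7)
The plan is a transfer argument. Under the hypothesis, the extension $A \to \artmx$ is free of rank $m$ on homotopy, so if this freeness can be promoted to an $A$-module splitting at the spectrum level, the restriction-of-scalars transfer will split the $K$-theory extension map up to multiplication by $m$, which is a unit after $p$-localization.

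First, I would establish an equivalence $\artmx \simeq \bigvee_{i=0}^{m-1} \Sigma^{ik} A$ of left $A$-modules. Under Hypothesis~\ref{hypo root adjunction}, the graded $\pi_*(A)$-module $\pi_*(\artmx)$ is isomorphic to $\pi_*(A)[z]/(z^m-a)$, free on $\{1,z,\dots,z^{m-1}\}$ with $|z^i| = ik \geq 0$. Since $A$ is connective and each $ik$ is non-negative (because $|a|>0$), each basis class is represented by a left-$A$-module map $\Sigma^{ik}A \to \artmx$; summing these yields a map which is a $\pi_*$-isomorphism between connective $A$-modules, hence an equivalence. In particular $\artmx$ is perfect as an $A$-module, so the forgetful functor $\Perf(\artmx) \to \Perf(A)$ is well-defined and induces a transfer $\on{tr}\colon \kth(\artmx) \to \kth(A)$.

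Next, I would identify the composition $\on{tr} \circ f\colon \kth(A) \to \kth(A)$, where $f$ denotes the extension map in $K$-theory. This composition is induced by the endofunctor $M \mapsto M\otimes_A \artmx$ on $\Perf(A)$, which by the decomposition above is equivalent to $\bigvee_{i=0}^{m-1} \Sigma^{ik}(-)$. The suspension $\Sigma$ acts as multiplication by $-1$ on $K$-theory of any stable $\infty$-category; since $k$ is even, each $\Sigma^{ik}$ acts as the identity on $\kth(A)$, and the composition becomes multiplication by $m$. Finally, because $A$ is $p$-local the spectrum $\kth(A)$ is $p$-local, and $p\nmid m$ makes $m$ act invertibly on it; hence $m^{-1}\on{tr}$ retracts $f$ up to homotopy, exhibiting $\kth(A)$ as a wedge summand of $\kth(\artmx)$.

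The main obstacle is the first step. While the freeness of $\pi_*(\artmx)$ over $\pi_*(A)$ is built into the hypothesis, promoting it to a genuine equivalence of $A$-modules uses both the connectivity of $A$ and the positivity of $|a|$ to lift the homotopy basis to left-$A$-module maps landing in the correct connectivity range. Once this spectrum-level decomposition is in place, the transfer construction, the computation of the composite as $m\cdot\mathrm{id}$, and the inversion of $m$ are all formal.
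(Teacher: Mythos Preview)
Your transfer argument has a genuine gap in the final step: the spectrum $\kth(A)$ is \emph{not} $p$-local even when $A$ is $p$-local and connective, so multiplication by $m$ is not an equivalence on it. For instance, $\kth_0(A) \cong \kth_0(\pi_0 A)$ always contains a copy of $\Z$ generated by $[A]$, and this $\Z$ is not a $\Z_{(p)}$-module. Concretely, $\kth_0(\Z_{(p)}) = \Z$ and $\kth_1(\Z_{(p)}) = \Z_{(p)}^\times$, neither of which is $p$-local. Thus $m^{-1}\on{tr}$ does not exist, and your argument only shows that $\kth(A)[1/m] \to \kth(\artmx)[1/m]$ is a split inclusion---a strictly weaker statement than the theorem. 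There is also a smaller gap earlier: to identify the composite endofunctor with $\bigvee_i \Sigma^{ik}(-)$ you need $\artmx \simeq \bigvee_i \Sigma^{ik}A$ as $A$-\emph{bimodules}, not merely as left $A$-modules; this can be fixed using that $\sphsk$ is $E_2$ (so right multiplication by $\sigma_k^i$ commutes with the full $\artmx$-bimodule structure), but you did not argue it.

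The paper's proof avoids both issues by working on the $\tc$ side. The $m$-grading on $\artmx$ induces an $m$-grading on $\thh(\artmx)$ as an $S^1$-spectrum, and one shows $\thh(A) \simeq \thh(\artmx)_0$. Since $p \nmid m$, the cyclotomic Frobenius preserves the ``weight zero vs.\ nonzero weight'' dichotomy, yielding a splitting $\tc(\artmx) \simeq \tc(A) \vee \tc(\artmx)_1$. One then constructs a map of $m$-graded $E_1$-rings $\artmx \to H\pi_0 A$ (using $k>0$) with $H\pi_0 A$ concentrated in weight zero; the Dundas--Goodwillie--McCarthy pullback square for this map then splits into two squares, giving $\kth(\artmx) \simeq \kth(A) \vee \tc(\artmx)_1$ integrally. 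The complement is a piece of $\tc$, not $\kth$ of anything, which explains why a purely $K$-theoretic transfer cannot see the full splitting.
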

This is deduced from the corresponding result for topological cyclic homology,
see Theorem~\ref{theo weight zero splitting for tc}.
An analogous splitting result for algebraic $K$-theory in the non-connective case is
provided in Corollary~\ref{corr nonconnective alg kthry inclusion for root adj}.

For an integer $n>0$, we say that a spectrum $E$ is \emph{height} $n$ if $L_{T(n)}
E \not \simeq 0$ and $L_{T(m)}E \simeq 0 $ for $m>n$, where, $T(n)$ denotes a
height $n$ telescope. We say an $E_1$-ring $A$ of height $n$ \emph{exhibits
redshift} if $\kth(A)$ is of height $n+1$. Due to \cite[Purity
Theorem]{land2020purity}, $\kth(A)$ is of height at most $n+1$, so $A$ will
exhibit redshift if $L_{T(n+1)} \kth(A)\not\simeq0$.
The following result is thus an immediate consequence of our splitting results,
Theorem~\ref{theo root adj k theory inclusion} and Corollary~\ref{corr
nonconnective alg kthry inclusion for root adj}:

\begin{corr}\label{corr intro root adj also satisfy redshift}
Assume Hypothesis \ref{hypo root adjunction} with $p\nmid m$ and $\lv a \rv >0$.
If $A$ exhibits redshift, then so does $\artmx$.
\end{corr}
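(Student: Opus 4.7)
The plan is to deduce this as an essentially immediate consequence of the wedge splitting results by localizing with respect to $T(n+1)$ and bracketing the height of $\kth(\artmx)$ between lower and upper bounds. First, I would confirm that $\artmx$ has the same height $n$ as $A$. This should follow from the construction of root adjunction, which produces $\artmx$ as a free $A$-module of finite rank $m$ (with generators $1,z,\ldots,z^{m-1}$); consequently $L_{T(k)}\artmx \simeq \artmx \wedge L_{T(k)}\sph$ splits as a wedge of $m$ shifts of $L_{T(k)}A$, so it is nonzero precisely when $L_{T(k)}A$ is, and the heights agree.

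Next, I would apply $L_{T(n+1)}$ to the wedge-summand inclusion $\kth(A)\hookrightarrow \kth(\artmx)$ furnished by Theorem~\ref{theo root adj k theory inclusion} in the connective case, or by Corollary~\ref{corr nonconnective alg kthry inclusion for root adj} more generally. Because any functor preserves retracts, $L_{T(n+1)}\kth(A)$ sits as a wedge summand inside $L_{T(n+1)}\kth(\artmx)$. The hypothesis that $A$ exhibits redshift gives $L_{T(n+1)}\kth(A)\not\simeq 0$, and hence $L_{T(n+1)}\kth(\artmx)\not\simeq 0$, so $\kth(\artmx)$ has height at least $n+1$.

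To match this with an upper bound, I would invoke the Purity Theorem of~\cite{land2020purity}: since $\artmx$ has height $n$, we get $L_{T(m)}\kth(\artmx)\simeq 0$ for all $m>n+1$, i.e.\ $\kth(\artmx)$ has height at most $n+1$. Combined with the previous step this pins the height of $\kth(\artmx)$ at exactly $n+1$, which is the assertion that $\artmx$ exhibits redshift.

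Given the splitting theorems, this argument is essentially forced, and there is no serious obstacle. The only substantive verification is the height preservation in the first step, and even that reduces to the finite free nature of $\artmx$ as an $A$-module and the fact that telescopic localizations are smashing.
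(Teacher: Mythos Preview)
Your proposal is correct and follows exactly the approach the paper intends: the corollary is stated as an immediate consequence of Theorem~\ref{theo root adj k theory inclusion} and Corollary~\ref{corr nonconnective alg kthry inclusion for root adj} together with the Purity Theorem, and you have simply spelled out the details (including the height preservation for $\artmx$, which follows from its being a finite free $A$-module). The paper even records the non-connective version of the conclusion directly in the second sentence of Corollary~\ref{corr nonconnective alg kthry inclusion for root adj}.
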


A key feature of the defined root adjunction
is that $\artmx$ is endowed with the structure of an $E_1$-algebra in the
symmetric monoidal category $\on{Fun}(\z/m,\sp)$ of $m$-graded spectra, which is reflected in an Adams' type splitting of spectra
$$
\artmx\simeq\bigvee_{0\leq i <m}\Sigma^{ik}A\,.
$$
Such a grading on a spectrum, compatible with further additional algebraic
structures, has already proven to be very useful:
let us mention the computations of Hesselholt-Madsen
\cite{hesselholt1997polytopesandtruncatedpolynomial} of the $\kth$-theory of
truncated polynomial algebras, and of the second and third authors
\cite{bayindir2020kthryofthh} of the $K$-theory of the free $E_1$-algebras in degree $2$ over finite fields. 
In the present paper, the grading corresponding to the root adjunction, and the
induced splitting of $\thh$ as developed in~\cite[Appendix
A]{antieau2020beilinson}, are essential ingredients in the proofs
of our various splitting results.

We use the theory of \emph{logarithmic topological Hochschild homology} for an in depth study of the THH of $\artmx$. Hesselholt and Madsen \cite{hesselholt2003ktheoryoflocalfields}
introduced logarithmic topological Hochschild
homology for studying the algebraic $K$-theory of complete discrete valuation rings in mixed
characteristic, and proved a descent property of log~THH in the
case of tamely ramified extensions.
Rognes~\cite{rognes2009topologicallogarithmic} then initiated a study 
of logarithmic structures, logarithmic André-Quillen homology and 
of $\mathrm{log}\thh$ in the context of $E_\infty$ ring spectra.
With Sagave and Schlichtkrull~\cite{rognes2015localization,rognes2018logthhofku}, they then established
the existence of localization sequences for $\mathrm{log}\thh$ and proved that
it satisfies tamely ramified descent in the example of the extension $\ell_p\to
ku_p$.

Here, we offer an alternative definition of log THH that applies to a more general class of ring spectra. More precisely, we define log THH for a given $E_1$-ring spectrum $A$ and a
class $a\in\pi_{mk}(A)$ satisfying Hypothesis~\ref{hypo root adjunction},
associated to the pre-log structure
given by the monoid generated, under multiplication, by $a\in \pi_*(A)$ (see 
Definition~\ref{defi log thh of general X}); it is denoted $\thh(A \mid
a)$. For instance, this applies to the Morava $K$-theory spectrum $k(n)$ for $v_n \in \pis k(n)$.  We prove the following form of tamely ramified descent (see also Theorem \ref{theo root adj is log thh etale }):

\begin{theo}[Theorem \ref{theo thh after root adjunction}]\label{theo intro thh after root adjunction}
If $A$ is $p$-local and $p\nmid m$, there is an equivalence of spectra:
\[\thh(\artmx) \simeq \thh(A) \vee \big(\bigvee_{0<i<m}\Sigma^{ik}\thh(A \mid a) \big).\]
\end{theo}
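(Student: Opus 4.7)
The plan is to exploit the fact that root adjunction equips $\artmx$ with the structure of an $E_1$-algebra in the symmetric monoidal category $\on{Fun}(\Z/m,\Sp)$ of $\Z/m$-graded spectra, with the underlying splitting $\artmx \simeq \bigvee_{0 \leq i < m} \Sigma^{ik} A$ placing $\Sigma^{ik} A$ in weight $i$. Applying the graded refinement of topological Hochschild homology from~\cite[Appendix A]{antieau2020beilinson}, we obtain a canonical $\Z/m$-grading on $\thh(\artmx)$ and hence a wedge decomposition
\[
\thh(\artmx) \simeq \bigvee_{0\leq j<m} \thh(\artmx)_j.
\]
It then remains to identify the summands as $\thh(\artmx)_0 \simeq \thh(A)$ and $\thh(\artmx)_j \simeq \Sigma^{jk}\thh(A\mid a)$ for $0 < j < m$.

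For the weight-zero identification, I would use Theorem~\ref{theo root adj is log thh etale}, the log-THH-\'etaleness of root adjunction, which should provide the direct comparison between $\thh(A)$ and the weight-zero component of $\thh(\artmx)$. The tameness hypotheses ($A$ being $p$-local with $p\nmid m$) are essential here: without tameness, cyclic words in the generator $\sqrt[m]{a}$ of nonzero weights summing to $0\pmod m$ would leave residual contributions in weight zero, whereas in the tame case such contributions are absorbed into the higher-weight log-THH summands via a $\Z/m$-averaging argument that requires $m$ to be invertible $p$-locally.

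For the weight-$j$ pieces with $0<j<m$, the goal is to identify $\thh(\artmx)_j$ with $\Sigma^{jk}\thh(A\mid a)$ directly from Definition~\ref{defi log thh of general X}. This should amount to matching the cyclic nerve of $\artmx$ restricted to weight $j$ with the cyclic bar construction defining log THH of $A$ with respect to the pre-log structure generated by $a$: cyclic tuples $(i_1,\dots,i_n)$ in $\Z/m$ with $\sum_\ell i_\ell \equiv j \pmod m$ encode words in powers of $a$ recorded by this pre-log structure, while the prefactor $\Sigma^{jk}$ accounts for the internal degree of the generator $\sqrt[m]{a}$ contributed by each such tuple. The main obstacle I anticipate is in carrying out this matching rigorously and uniformly in $j$: one needs to verify that the weight-$j$ summand is a single copy of $\Sigma^{jk}\thh(A\mid a)$ rather than a wedge of several, and this again rests on the tameness hypothesis to cleanly identify the $\Z/m$-equivariant structure on the cyclic nerve with the pre-log combinatorics defining $\thh(A\mid a)$.
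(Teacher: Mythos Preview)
Your overall strategy—decompose $\thh(\artmx)$ via the $\Z/m$-grading and identify each graded piece—matches the paper's, but you have essentially swapped the roles of the two key inputs.

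For the weight-zero identification $\thh(\artmx)_0 \simeq \thh(A)$, Theorem~\ref{theo root adj is log thh etale } is the wrong tool: log-THH-\'etaleness concerns $\thh(\artmx\mid\sqrt[m]{a})$, and its weight-zero statement reads $\thh(\artmx\mid\sqrt[m]{a})_0 \simeq \thh(A\mid a)$, not $\thh(\artmx)_0 \simeq \thh(A)$. The paper instead invokes Theorem~\ref{theo weight zero of THH after root adjunction}, proved earlier without any log THH: writing $\thh(\artmx) \simeq \thh(A)\wdg_{\thh(\sphpsmk)}\thh(\sphpsk)$ via Proposition~\ref{prop thh of relative smash product}, one reduces to showing $\thh(\sphpsmk)\xrightarrow{\simeq}\thh(\sphpsk)_0$ (Proposition~\ref{prop spherical root degree zero thh}), which is a B\"okstedt spectral sequence computation where $m$ being a $p$-local unit enters exactly in the way your ``averaging'' intuition suggests.

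For the nonzero weights, the paper \emph{does} use log-THH-\'etaleness—this is where Theorem~\ref{theo root adj is log thh etale } belongs—but combines it with the localization cofiber sequence of Theorem~\ref{theo cofiber seq for thh to log thh}. The cofiber of $\thh(\artmx)\to\thh(\artmx\mid\sqrt[m]{a})$ is $\Sigma\thh(\artmx/\sqrt[m]{a})$, and since $\artmx/\sqrt[m]{a} \simeq A\wdg_{\sphsmk}\sph$ is concentrated in weight $0$, so is its THH; hence $\thh(\artmx)_j\simeq\thh(\artmx\mid\sqrt[m]{a})_j$ for $j\neq 0$, and the latter is $\Sigma^{jk}\thh(A\mid a)$ by log-THH-\'etaleness. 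This cleanly sidesteps the combinatorial matching you flag as the ``main obstacle'': your direct approach via cyclic tuples is not obviously wrong, but it is not carried out, and it would require real work given that $\thh(A\mid a)$ is defined as a relative smash product over $\thh(\sphsk)$ rather than via any explicit cyclic construction.
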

We also prove the existence of a localization cofibre sequence 
 \[\thh(A) \to \thh(A \mid a) \to \Sigma \thh(A/a),\]
see Theorem \ref{theo cofiber seq for thh to log thh}. This is an analogue in the
present setting of the localization sequences constructed
in~\cite{rognes2015localization}, but note that our definition only applies to
the case of a pre-log structure given by a monoid on a single generator. 
We would like to point out also the recent preprint of Binda, Lundemo, Park and
Østv{\ae}r~\cite{lundemo}, where a version
of logarithmic Hochschild homology for simplicial commutative rings is
constructed.

We now mention examples of application of the above results.

\subsection*{Topological $K$-theory}
We prove in Theorems~\ref{theo complex kthry as root adjunction} and~\ref{prop real kthry as root adjunction}
that, at an odd prime $p$, there are  equivalences of $E_1$-ring spectra
\begin{equation*}
	ku_{p} \simeq \ell_p (\sqrt[p-1]{v_1}) \ \ \textup{and}\ \ \
	ko_p \simeq \ell_p (\sqrt[\frac{p-1}{2}]{v_1})\,.
\end{equation*}
These equivalences upgrade the splitting result of Adams
into a $\z/(p-1)$-graded, respectively $\Z/(\frac{p-1}{2})$-graded $E_1$-ring structure.
We prove that when $p=1$ in $\z/m$, the splitting 
in Theorem~\ref{theo INTRO root adj k theory inclusion} can be improved to a more refined splitting of $\kth(\artmx)$. 
In the case of $ku_p\simeq \ell_p (\sqrt[p-1]{v_1})$, this more refined splitting reads as
\begin{equation}\label{intro-split-Kku}
\kth(ku_p)\simeq \bigvee_{0 \leq i <p-1} \kth(ku_p)_i\,.
\end{equation}
Here $\kth(ku_p)_0 \simeq \kth(\ell_p)$, and for $p>3$, and we can compute the
$V(1)$-homotopy groups of each of the $i$-th-graded piece $\vos\kth(ku_p)_i$
using first author's computation of $\vos \kth(ku_p)$. 

Using this refined splitting, the second author makes a simplified
computation of $T(2)_*\kth(ku)$ in \cite{bayindir2023algkthryofmrvykthry}. 
We also obtain  complete descriptions of $\vos
\kth(ko_p)$ and $\ttws \kth(ko_p)$, see Theorem \ref{theo kthry of ko}. 

We note that the splitting~\eqref{intro-split-Kku} can be considered as a version of Adams'
splitting for the cohomology theory represented by $K(ku)$, with classes
corresponding to $2$-categorical complex vector bundles, as developped in~\cite{baas2011stablebundlesrig}.

\subsection*{Johnson-Wilson spectra and Morava $E$-theory}
Let $n\geq 1$ be an integer, and let $E(n)$ and $E_n$ be the Johnson-Wilson and Morava $E$-theory
spectra. Let $\widehat{E(n)}$ be the $K(n)$-localization of $E(n)$. These
spectra have coefficient rings given as
$$
\pis E(n) \cong \zp[v_1,...,v_{n-1},v_n^\pm] , \hspace{7.5mm} 
\pis E_n \cong W(\fpn)[\lv u_1, \dots,u_{n-1}\rv][u^\pm]
$$
and $\pis \widehat{E(n)} \cong \pis E(n)^{\wedge}_{I_n}$, 
where $\lv u_i\rv = 0$, $\lv u\rv = -2$, and $I_n=(p,v_1,\dots,v_n)$.
The Galois group $Gal=\text{Gal}(\fpn | \fp)$ acts on $E_n$, and let
$E_n^{hGal}$ be the homotopy fixed point spectrum with coefficients
$\pis E_n^{hGal} \cong \Z_p[\lv u_1, \dots,u_{n-1}\rv][u^\pm]$.

We prove in Theorem~\ref{theo morava E theories as root adjunction}
that there are equivalences of $E_1$-rings 
\begin{equation}\label{eq intro mrv ethry as root adj}
\begin{aligned}
    E_n& \simeq   \sphwpn \wdgp \widehat{E(n)}(\sqrt[p^n-1]{v_n}),\ \
    \textup{and}\\
E_n^{hGal}&\simeq \widehat{E(n)}(\sqrt[p^n-1]{v_n})\,.
\end{aligned}
\end{equation}  
This promotes the $E_1$-ring structure on  Morava
$E$-theories to a non-trivial $\z/(p^n-1)$-graded $E_1$-ring structure.
 
Using this description of $E_n$ and the log THH \'etaleness of root adjunction, we show
in Theorem \ref{theo thh basechange for morava e theory}
that  the canonical map 
 \[\thh(\widehat{E(n)})\wdg_{\widehat{E(n)}} E_n \xrightarrow{\simeq_p}
 \thh(E_n),\]
 is an equivalence after $p$-completion.
 The relationship between such equivalences and the Galois
 descent question for THH are studied in \cite{mathew2017thhbasechng}.

Applying Corollary~\ref{corr nonconnective alg kthry inclusion for root adj} to
these root adjunctions of non-connective spectra, we deduce the following
result.
\begin{theo}[Theorem \ref{theo K theory of Morava E theories}]\label{theo intro ktrh of mrv e theory}
The canonical maps:
\begin{align*}
\kth(E(n)) \to&\  K(E_n^{hGal})\\
\kth(\sphwpn \wdg E(n)) \to & \  \kth(E_n)
\end{align*}
are inclusions of  wedge summands after $T(n+1)$-localization.
\end{theo}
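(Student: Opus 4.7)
The plan is to reduce the statement to the wedge-summand splitting of Corollary \ref{corr nonconnective alg kthry inclusion for root adj} applied to the root-adjunction descriptions \eqref{eq intro mrv ethry as root adj}, and to bridge from $\widehat{E(n)}$ back to $E(n)$ by invoking chromatic purity of $T(n+1)$-local algebraic $K$-theory.

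First I would verify the hypotheses of Corollary \ref{corr nonconnective alg kthry inclusion for root adj} for the class $v_n \in \pi_{2(p^n-1)}\widehat{E(n)}$. Since $\widehat{E(n)}$ is an $E_\infty$-ring with homotopy concentrated in even degrees, Hypothesis \ref{hypo root adjunction} holds, and $\gcd(p,p^n-1)=1$, so the corollary applies and yields that
\[
\kth(\widehat{E(n)}) \longrightarrow \kth\bigl(\widehat{E(n)}(\sqrt[p^n-1]{v_n})\bigr) \simeq \kth(E_n^{hGal})
\]
is a wedge summand inclusion after $T(n+1)$-localization. Because root adjunction is constructed as a pushout of $E_1$-algebras, it commutes with base change along the $E_\infty$-map $\sphp \to \sphwpn$, so
\[
\sphwpn \wdgp \widehat{E(n)}(\sqrt[p^n-1]{v_n}) \simeq \bigl(\sphwpn \wdgp \widehat{E(n)}\bigr)(\sqrt[p^n-1]{v_n}),
\]
and a second application of the corollary exhibits $\kth(\sphwpn \wdgp \widehat{E(n)}) \to \kth(E_n)$ as a $T(n+1)$-local wedge summand inclusion.

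It then remains to compare $\kth(E(n))$ with $\kth(\widehat{E(n)})$, and $\kth(\sphwpn \wdgp E(n))$ with $\kth(\sphwpn \wdgp \widehat{E(n)})$, after $T(n+1)$-localization. For this I appeal to the purity theorem of \cite{land2020purity}: the $K(n)$-localization $E(n) \to \widehat{E(n)}$ is a completion at $I_n = (p,v_1,\dots,v_{n-1})$, whose fibre is assembled from spectra of chromatic height strictly less than $n$, and the algebraic $K$-theory of such objects has height at most $n$, hence vanishes after $T(n+1)$-localization. Consequently $L_{T(n+1)} \kth(E(n)) \to L_{T(n+1)} \kth(\widehat{E(n)})$ is an equivalence, and the analogous statement holds after smashing with $\sphwpn$. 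Composing with the wedge summand inclusions of the previous paragraph produces the desired $T(n+1)$-local wedge summand inclusions.

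The main obstacle will be making this chromatic reduction step precise: the purity theorem gives an upper bound on the chromatic height of $\kth$, and one must take care to promote this into $T(n+1)$-local insensitivity of $\kth$ to the $K(n)$-completion $E(n) \to \widehat{E(n)}$, in a form compatible with composition with the root-adjunction inclusion. Once this identification is in hand, the wedge-summand splittings from Corollary \ref{corr nonconnective alg kthry inclusion for root adj} transfer formally to yield the theorem.
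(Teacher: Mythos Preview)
Your approach is essentially the same as the paper's: apply Corollary~\ref{corr nonconnective alg kthry inclusion for root adj} to $\widehat{E(n)}$ (and to $\sphwpn \wdg \widehat{E(n)}$ via the base-change compatibility you indicate), and pass from $E(n)$ to $\widehat{E(n)}$ using chromatic purity.

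Two remarks. First, the step you flag as the ``main obstacle'' is in fact routine: the map $E(n)\to\widehat{E(n)}=L_{K(n)}E(n)$ is a $T(n)\vee T(n+1)$-equivalence because its fiber is $L_{n-1}$-local and hence $T(n)$- and $T(n+1)$-acyclic; the Purity Theorem of \cite{land2020purity} then gives directly that $L_{T(n+1)}\kth(E(n))\to L_{T(n+1)}\kth(\widehat{E(n)})$ is an equivalence, with no further care needed. Second, there is a small gap in your treatment of the second map: the theorem concerns $\sphwpn\wdg E(n)$ (smash over $\sph$), whereas your base-change argument and the identification $E_n\simeq\sphwpn\wdgp E_n^{hGal}$ use $\wdgp$. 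The paper inserts the extra map $\sphwpn\wdg E_n^{hGal}\to\sphwpn\wdgp E_n^{hGal}$ into the composite and observes that it too is a $T(n)\vee T(n+1)$-equivalence, hence a $T(n+1)$-equivalence on $\kth$ by purity; you should include this step as well.
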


\subsection*{Lubin-Tate spectra}
We can also apply our results to Lubin-Tate spectra that can be constructed, in
several steps, from the truncated Brown-Peterson spectra $\bpn$, 
with coefficients $\pis\bpn\cong\Z_{(p)}[v_1,\dots,v_n]$.
In more precise terms, we consider an $E_3$ $MU[\sigma_{2(p^n-1)}]$-algebra form of
$\bpn$ as constructed by Hahn and Wilson~\cite[Remark 2.1.2]{hahn2020redshift}.
In this situation, we can construct $\bpn(\sqrt[p^n-1]{v_n})$ as an $E_3$ $MU[\sigma_2]$-algebra
(Remark \ref{rema bpnadj root is ethree}). 

Let be $k$ a
perfect field of characteristic $p$, and let $\sph_{W(k)}$ denote the spherical Witt vectors spectrum.
We prove in Proposition~\ref{prop bpn adj root is lubin tate}
that the $MU[\sigma_2]$-orientation above provides a formal group law $\Gamma$ of
height $n$ over $k$, and that there is an equivalence of $E_3$-rings 
\[L_{K(n)} (\sph_{W(k)} \wdg\bpn)(\sqrt[p^n-1]{v_n}) \simeq E_{(k,\Gamma)}\,,\]
where $E_{(k,\Gamma)}$ denotes the Lubin-Tate spectrum corresponding to $\Gamma$.
Due to  \cite[Purity Theorem]{land2020purity}, we have an equivalence
\[L_{T(n+1)}\kth\big(\sph_{W(k)}\wdg\bpn(\sqrt[p-1]{v_n})\big) \simeq
L_{T(n+1)}\kth(E_{(k,\Gamma)}).\]
 By \cite{hahn2020redshift}, $\bpn$ satisfies the redshift conjecture; following an argument suggested to us by Hahn, we show that $\sph_{W(k)} \wdg\bpn$ also satisfies
the red-shift conjecture, c.f. Proposition~\ref{prop bpn with witt coefficients satisfy the redshift conjecture}.
By Corollary~\ref{corr intro root adj also satisfy redshift}, we deduce that $E_{(k,\Gamma)}$ satisfies the redshift conjecture. Indeed, we  deduce from this (see Theorem~\ref{theo lubin tate
redshift}) a new proof of Yuan's result~\cite{yuan2021examples} that
 all Lubin-Tate spectra satisfy the redshift conjecture. We also obtain the following from Corollary \ref{corr nonconnective alg kthry inclusion for root adj}.
\begin{theo}[Theorem~\ref{theo kthry splitting from bpn to lubin tate}]
The induced map 
$$
L_{T(n+1)} \kth(\sph_{W(k)}\wdg\bpn) \to L_{T(n+1)}\kth(E_{(k,\Gamma)})
$$
is the inclusion of a \emph{non-trivial} wedge summand.
\end{theo}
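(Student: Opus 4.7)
The plan is to combine the wedge-summand splitting of algebraic $\kth$-theory under root adjunction, established in Corollary~\ref{corr nonconnective alg kthry inclusion for root adj}, with the identification of $\ltnpo\kth$ after $K(n)$-localization that is recorded, via the Purity Theorem of \cite{land2020purity}, in the display preceding the theorem. The canonical map of the statement factors as
$$\sphwk \wdg \bpn \longrightarrow (\sphwk \wdg \bpn)(\sqrt[p^n-1]{v_n}) \longrightarrow L_{K(n)}\bigl((\sphwk \wdg \bpn)(\sqrt[p^n-1]{v_n})\bigr) \simeq E_{(k,\Gamma)},$$
where the last identification is Proposition~\ref{prop bpn adj root is lubin tate}. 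The first arrow is the $(p^n{-}1)$-th root adjunction for the class $v_n$, with parameters $m = p^n-1$ satisfying $p \nmid m$ and $\lv v_n \rv = 2(p^n-1) > 0$.

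I would then apply Corollary~\ref{corr nonconnective alg kthry inclusion for root adj} to the first arrow, producing an inclusion of wedge summand
$$\ltnpo \kth(\sphwk \wdg \bpn) \longrightarrow \ltnpo \kth\bigl((\sphwk \wdg \bpn)(\sqrt[p^n-1]{v_n})\bigr).$$
Composing with the $\ltnpo\kth$-equivalence induced by $K(n)$-localization---namely the displayed Purity-based equivalence situated just above the theorem---identifies the target with $\ltnpo\kth(E_{(k,\Gamma)})$ and yields the desired wedge-summand inclusion. For non-triviality it suffices to show that $\ltnpo \kth(\sphwk \wdg \bpn) \not\simeq 0$, which is precisely the redshift statement for $\sphwk \wdg \bpn$ proved in Proposition~\ref{prop bpn with witt coefficients satisfy the redshift conjecture}.

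The main obstacle, really a bookkeeping task, is to verify that the hypotheses of Corollary~\ref{corr nonconnective alg kthry inclusion for root adj} are met by the pair $(\sphwk\wdg\bpn, v_n)$: one must check Hypothesis~\ref{hypo root adjunction} for $v_n \in \pis(\sphwk \wdg \bpn)$, together with $p$-locality and coprimality. These should follow from the $E_3$ $MU[\sigma_2]$-algebra form of $\bpn$ constructed by Hahn--Wilson, combined with the evenness of $\pis \bpn$ and the fact that $\sphwk$ is a $p$-local spherical lift of $W(k)$; these features have already been exploited in Remark~\ref{rema bpnadj root is ethree} and Proposition~\ref{prop bpn adj root is lubin tate}. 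Once these verifications are in place, the theorem follows by concatenation of the results cited above.
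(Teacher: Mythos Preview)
Your proposal is correct and follows essentially the same approach as the paper: apply Corollary~\ref{corr nonconnective alg kthry inclusion for root adj} for the wedge-summand inclusion, use the Purity Theorem to pass between $\sphwk\wdg\bpn$ (or its root adjunction) and the $K(n)$-local version, and invoke Proposition~\ref{prop bpn with witt coefficients satisfy the redshift conjecture} for non-triviality. The only difference is the order of operations: the paper applies root adjunction to $L_{K(n)}(\sphwk\wdg\bpn)$ and then uses Purity on the source, whereas you adjoin the root to $\sphwk\wdg\bpn$ first and use Purity on the target. One small inaccuracy: in your displayed factorization you write $L_{K(n)}\bigl((\sphwk\wdg\bpn)(\sqrt[p^n-1]{v_n})\bigr)\simeq E_{(k,\Gamma)}$ and attribute this to Proposition~\ref{prop bpn adj root is lubin tate}, but that proposition proves $\bigl(L_{K(n)}(\sphwk\wdg\bpn)\bigr)(\sqrt[p^n-1]{v_n})\simeq E_{(k,\Gamma)}$, with the operations in the other order; this does not affect your argument, since you ultimately rely on the displayed Purity equivalence rather than on this identification.
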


We expect the above result to be relevant also for explicit computations. For example, in
\cite{angelini2022kthryofelliptic}, the authors compute $V(2)_*\tc(BP\langle
2\rangle)$ for $p\geq 7$ which, provides an explicit description of
$T(3)_*\kth(BP\langle 2 \rangle)$. Through the inclusion above, we deduce that
$T(3)_*\kth(BP\langle 2 \rangle)$ maps isomorphically to a summand of
$T(3)_*\kth(E_{(\fp,\Gamma)})$ for a height $2$ formal group law $\Gamma$ over
$\fp$. To our knowledge, this is the first explicit, quantitative result on the
algebraic $K$-theory groups of Lubin-Tate spectra for height larger than~$1$.

Note that it is not known if the $E_3$ $MU$-algebra forms of $\bpn$ constructed
in \cite{hahn2020redshift} map into the Morava $E$-theories mentioned in the
preceding sub-section.

\begin{rema}
    In \cite[Theorem G]{burklund2022chromaticnullstellensatz}, the authors construct an $E_\infty$-map $MU[\sigma_2] \to E_{(\overline{\mathbb{F}}_p,\Gamma')}$ for the unique height $n$ formal group law $\Gamma'$ on $\overline{\mathbb{F}}_p$ solving an open question on the existence of orientations on Lubin-Tate spectra. Our constructions provide a similar orientation for a ``smaller'' form of  Lubin-Tate spectra. Namely, we obtain that $E_{(\fp,\Gamma)}$ is an $E_3$ $MU[\sigma_2]$-algebra where $\sigma_2$ acts through $u^{-1} \in \pis E_{(\fp,\Gamma)}$; see Example \ref{exa for the telescope conjecture}. Moreover, there is a grading on $E_{(\fp,\Gamma)}$  that respects this structure. To be precise, $E_{(\fp,\Gamma)}$ is an $E_3$ $MU[\sigma_2]$-algebra in the $\infty$-category of $\z/(p^n-1)$-graded spectra where $u^{-1} \in \pis E_{(\fp,\Gamma)}$ is of weight $1$. 
\end{rema}


\begin{rema}\label{rema telescope}
The above construction of Lubin-Tate spectra by root adjunction is used in an
essential way in the construction of a counter-example to the telescope
conjecture in forthcoming work by Burklund, Hahn, Levy and Schlank.
\end{rema}

\subsection*{Morava $K$-theory}

In Section \ref{sec two periodic morava ktheory}, we construct two-periodic Morava $K$-theories  from Morava $K$-theories   through root adjunction. By Corollary \ref{corr intro root adj also satisfy redshift}, we deduce that two-periodic Morava $K$-theories satisfy the redshift conjecture if the redshift conjecture holds for Morava $K$-theories   (Corollary \ref{corr redshift for two periodic mrv kthry}). 

For $p>3$, the $V(1)$-homotopy of $\kth(k(1))$ is computed by the first author and Rognes in \cite{ausoni2021kthrymrvKthry} where it is also shown that $k(1)$ satisfies the redshift conjecture. From this, we deduce that the two-periodic first Morava $K$-theory  $ku/p$ also satisfies the redshift conjecture (Corollary \ref{corr first two perdc
mrv ktrhy redshift}). Moreover, through the interpretation of  $ku/p$ as $k(1)(\sqrt[p-1]{v_1})$, the second author makes the first  computation of 
$T(2)_*\kth\big(ku/p\big)$ in  \cite{bayindir2023algkthryofmrvykthry}.

\begin{outl}
We begin with a quick introduction of graded objects in Section \ref{sect graded ring spectra}.  In Section \ref{sect e2 polynomial algebras}, we construct a family of graded $E_2$ ``polynomial" algebras and establish their even cell decompositions. In Section \ref{sect adjoining roots and thh},  we provide our central construction for root adjunctions (Construction \ref{cons adjroots}) and prove our first splitting result on the THH of ring spectra obtained via a root adjunction. In Section \ref{sect adj roots and algebraic k theory} we prove Theorem \ref{theo intro ktrh of mrv e theory}. Section \ref{sect log thh} is devoted to studying the variant of log THH  we set forth, as well as the logarithmic THH-\'etaleness of root adjunctions. Section \ref{sec alg k theory of cmplx and real k theories} contains our results on the algebraic $\kth$-theory of real and complex topological $\kth$-theories. We apply our results to Lubin-Tate spectra in Section \ref{sect kthry and redshift for LT spectra}. In Section \ref{sec algebraic kthry of morava e theories}, we study the THH and the algebraic $K$-theory of Morava $E$-theories. 
\end{outl}

\begin{nota}\label{notation introduction}
\begin{enumerate}

\item We work freely in the setting of $\infty$-categories and higher algebra from \cite{lurie2009higher, lurie2016higher}. 
\item For an $E_2$-algebra $R$ in a symmetric monoidal $\infty$-category, when we say $T$ is an $R$-algebra (or an $E_1$ $R$-algebra), we mean that it is an $E_1$-algebra in \textit{right} $R$-modules. If we mean an $E_1$-algebra in left $R$-modules, we call this a \textit{left} $E_1$ $R$-algebra. If $R$ is $E_\infty$, we do not need to denote the distinction.
\item When we say $E_n$-ring, we mean an $E_n$-algebra in the $\infty$-category of spectra $\sp$.

\end{enumerate}
\end{nota}

\begin{ackn}
We would like to thank Sanath Devalapurkar for many of the ideas in Section
\ref{sect e2 polynomial algebras}. We also thank Jeremy Hahn for showing us the
proof of the redshift conjecture for $\bpn$ with Witt vector coefficients. We
benefited from various conversations with Andrew Baker and Robert Burklund and
we would like to thank them as well. We are very grateful to Robert Burklund
for pointing out a mistake in the proof of the claim, in a previous version of
this paper, that the Morava $K$-theories satisfy redshift; the claim has been
removed from the present version. 
We would like to thank Oscar
Randal-Williams for explaining to us in depth the constructions in
\cite{galatius2018cellular}. 
Finally, we would like to thank anonymous referrees for many very useful
suggestions or corrections.

The first and second authors acknowledge support from the project
ANR-16-CE40-0003 ChroK. The second author acknowledges support from the
Engineering and Physical Sciences Research Council (EPSRC)  grant EP/T030771/1.
The third author acknowledges support from the grant NEDAG ERC-2016-ADG-741501.
\end{ackn}
\section{Recollections on graded objects}\label{sect graded ring spectra}

Let $m\geq 0$ be an integer and let $\Z/m$ denote the discrete $\infty$-groupoid whose objects are the elements of the set of integers modulo $m$. For a presentably symmetric monoidal $\infty$-category $\cat V$,  we define the \textbf{$\infty$-category of $m$-graded objects in $\cat V$} to be the functor category $\on{Fun}(\Z/m, \cat V)$. For a functor $F$ in $\on{Fun}(\Z/m,\cat V)$, we denote $F(i)$ by $F_i$ for every $i \in \Z/m$. Since $\z/m$ is discrete, we have:
\[\on{Fun}(\Z/m, \cat V)\simeq \prod_{i \in \Z/m}\cat V.\]

For $m=0$, this is given by $\on{Fun}(\z, \cat V)$ where $\z$ is the corresponding discrete $\infty$-groupoid. In this case, we omit $m$ and we call $\on{Fun}(\z, \cat V)$ the \textbf{$\infty$-category of graded objects in $\cat V$}. We are mainly interested in the case $\cat V = \sp$. We call an object of $\on{Fun}(\z/m,\sp)$ an \textbf{$m$-graded spectrum}; for $m=0$, we drop $m$ and call it a \textbf{graded spectrum}. 

Using the symmetric monoidal structure on $\z/m$ given by addition, we equip  $\on{Fun}(\Z/m, \cat V)$ with the Day convolution closed symmetric monoidal structure \cite{glasman2016dayconvolution}. Since  $\Z/m$ is discrete, this boils down to the following. 

\begin{equation*}
    (F \ot_{\textup{Day}} G)_k = \coprod_{i+j = k \textup{\ in\ } \Z/m} F_i \ot G_j
\end{equation*}

\subsection{Algebras in  graded spectra}

We are interested in $E_n$-algebras in the $\infty$-category of $m$-graded spectra and the algebras over these $E_n$-algebras. 
\begin{defi}
An \textbf{$m$-graded $E_n$-ring} $A$ is an $E_n$-algebra in $\on{Fun}(\Z/m, \sp)$. For $k<n$, an $m$-graded $E_k$ $A$-algebra is an $E_k$ $A$-algebra in  $\on{Fun}(\Z/m, \sp)$. Similarly, an $m$-graded (left) right $A$-module is a (left) right $A$-module in   $\on{Fun}(\Z/m, \sp)$.
\end{defi}

\begin{rema}
Note that the notion of an $m$-graded $E_k$-algebra in $\sp$ is in general different than the notion of an $m$-graded object in the $\infty$-category of $E_k$-algebras in  $\sp$.
\end{rema}

\subsection{Manipulations on graded objects}\label{subsec manipulations on grade objects}
For a symmetric monoidal functor $\z/m\to \z/m'$, there is an induced adjunction between $\on{Fun}(\z/m,\sp)$ and $\on{Fun}(\z/m',\sp)$ where the left adjoint is symmetric monoidal and given by left Kan extension \cite[Corollary 3.8]{nikolaus2016stablemultyoneda}. The right adjoint is given by restriction. This provides the following adjunctions which
allow us to move between various gradings. Let $n>0$ and $s \geq 0$ be integers.
\begin{itemize}
\item  We let $D^n_{sn} \dashv Q$   denote the adjunction induced by the quotient map $\z/sn \to \z/n$ sending $1$ to $1$.
\item We often use  $D^{n}_{sn}$ for $s=0$ which allows us to obtain an $n$-graded object out of a graded object $X$ in $\sp$. We let $D^{n}$ denote $D^n_{0} \co \on{Fun}(\z,\sp) \to \on{Fun}(\z/n,\sp)$ and we have
\[D^n(X)_i \simeq \bigvee_{j\in \Z \mid j \equiv i\textup{\ mod n}}X_j.\] 
\item For $n = 1$, we denote $D^1_s$ by $D$. In this case, 
\[D \co \on{Fun}(\z/s,\sp) \to \sp\]  is given by $D(X) \simeq \vee_{j \in \z/s} X_j$, i.e.\ left Kan extension along $\z/s \to 0$. For an $s$-graded (spectrum) $E_n$-ring $X$, we call $D(X)$ the \textbf{underlying (spectrum) $E_n$-ring} of $X$. We often omit $D$ in our notation.

\item For $s \in \z$, let  $L_s \dashv R_s $ denote the adjunction on  $\on{Fun}(\z, \sp)$ induced by the  map $\z \xrightarrow{\cdot s} \z$ given by multiplication by $s$. For a graded spectrum  $X$, we have
\[L_s(X)_{si} \simeq X_i\]
for every $i$ and $L_s(X)_{j} \simeq 0$ whenever $s \nmid j$. 

\item Let $F\dashv G$ denote the adjunction induced by the trivial map $0 \to \z/m$. We have $G(X) = X_0$. For an $m$-graded $E_n$-ring $A$,  $F(G(A))$ is given by $A_0$ in weight $0$ and it is trivial on the other degrees. Therefore, we sometimes abuse notation and denote the $m$-graded $E_n$-ring $F(G(A))$ by $A_0$. The counit of this adjunction provides a map 
\[A_0 \to A\]
of $m$-graded $E_n$-rings. If $A_i \simeq 0$ for $i \neq 0$, then this map is an equivalence and we say that $A$ is \textbf{concentrated in weight zero}. The following lemma states that in this situation, there is an equivalence of $E_n$-rings between the underlying $E_n$-ring of $A$ and the weight zero piece $G(A) = A_0$ of $A$. Therefore, we often do not distinguish between $A$, $G(A) = A_0$ and $D(A)$ in our notation when $A$ is concentrated in weight zero.
\end{itemize}
\begin{lemm}\label{lem underlying of conc in weight zero}
Let $A$ be an $m$-graded $E_n$-ring concentrated in weight zero. There is an equivalence of $E_n$-rings
\[A_0 \simeq D(A)\]
where $A_0$ denotes $G(A)$. In particular,  we have  $F(D(A)) \simeq A$ as $m$-graded $E_n$-rings.
\end{lemm}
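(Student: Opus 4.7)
The plan is to identify both desired equivalences as instances of the same fact: the composite $\Z/m \to 0 \to \Z/m$ is not the identity, but the composite $0 \to \Z/m \to 0$ is, so the composition $D \circ F$ is equivalent to the identity functor on $\sp$.

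First I would unwind the hypothesis. Saying that $A$ is concentrated in weight zero means $A_i \simeq 0$ for all $i \neq 0$. The counit of the adjunction $F\dashv G$ evaluated at $A$ gives a map $F(G(A)) \to A$ of $m$-graded $E_n$-rings. Its value in weight $0$ is the identity $A_0 \to A_0$, and in weight $i\neq 0$ it is the canonical map $0 \simeq F(A_0)_i \to A_i \simeq 0$. Hence the counit is an equivalence of $m$-graded spectra, and since both sides are $E_n$-algebras and the counit is a map of $E_n$-algebras, it is an equivalence of $m$-graded $E_n$-rings. This already gives the second assertion once the first is established, because it yields $F(D(A)) \simeq F(A_0) = F(G(A)) \simeq A$.

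Next I would prove $D(A) \simeq A_0$ as $E_n$-rings. Recall from the discussion preceding the lemma that both $F$ and $D$ are left Kan extensions along symmetric monoidal functors of discrete monoids ($0 \to \Z/m$ and $\Z/m \to 0$ respectively), and hence are themselves symmetric monoidal by \cite[Corollary~3.8]{nikolaus2016stablemultyoneda}; in particular each takes $E_n$-algebras to $E_n$-algebras. The composite $\Z/m \to 0 \to \Z/m$ is not relevant, but the composite $0 \to \Z/m \to 0$ is manifestly the identity. By functoriality of the assignment $X \mapsto \on{Fun}(X,\sp)$ and of left Kan extension, we obtain a symmetric monoidal equivalence $D\circ F \simeq \id_{\sp}$. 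Applying this to $A_0 = G(A)$ and using the $E_n$-ring equivalence $F(G(A))\simeq A$ from the first step gives
\[
D(A) \simeq D(F(G(A))) \simeq G(A) = A_0
\]
as $E_n$-rings.

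There is no real obstacle; the argument is formal once one observes that both $F$ and $D$ are symmetric monoidal left Kan extensions and that their composite corresponds to the identity endofunctor of the terminal $\infty$-groupoid. The only point requiring attention is that all the equivalences above take place in $E_n$-algebras, not merely on underlying (graded) spectra, which is guaranteed by the cited symmetric monoidality of left Kan extension along maps of discrete monoids.
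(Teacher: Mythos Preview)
Your proof is correct and follows essentially the same approach as the paper: both arguments use that the counit $F(G(A)) \to A$ is an equivalence when $A$ is concentrated in weight zero, and that $D\circ F$ is left Kan extension along the composite $0 \to \Z/m \to 0 = \id$, hence equivalent to the identity, so that $D(A) \simeq D(F(G(A))) \simeq G(A) = A_0$. You simply present these two ingredients in the opposite order and supply a bit more detail on why the counit is an equivalence and why the equivalences respect $E_n$-structures.
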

\begin{proof}
Since $A$ is concentrated in weight zero, we have $D(A)\simeq DFG(A)$. As $D \circ F$ Kan extends through the composite $0\to \z/m \to 0$, it is equivalent to the identity functor. We obtain that $DFG(A) \simeq G(A) \simeq A_0$. 

For the second statement, note that $F(D(A)) \simeq F(G(A))$ due to the first statement. Since $A$ is concentrated in weight $0$, we have $F(G(A))\simeq A$.
\end{proof}

\section{A family of \texorpdfstring{$E_2$}{e2} polynomial rings in graded spectra}\label{sect e2 polynomial algebras}
In this section, we introduce the construction of a family of $E_2$-algebras in graded spectra. These have appeared in the work of Hahn and Wilson in \cite{hahn2020redshift} and are also studied in greater depth in \cite{logstructures}. These will be central to our constructions. For every $r, w \in \Z$, one constructs a graded  $E_2$-ring $\mathbb{S}[\sigma_{2r}]$ which may be thought of as a ``polynomial" algebra with a generator in homotopical degree $2r$ and grading weight $w$. However, these are not polynomial algebras in the precise sense, as they are only demonstrated to admit $E_2$ structures.  By mapping these $E_2$-rings into each other, we will be able to construct  $E_2$-ring extensions.

\subsection{Shearing preliminaries}
The main mechanism underlying this construction is that of shearing, which we now briefly review. It has appeared in \cite{raksit2020hochschild}, and is also studied in \cite{logstructures}. In what follows, $\on{Gr}(\sp)$ denotes $\on{Fun}(\z,\sp)$, i.e.\ the $\infty$-category of graded spectra.

\begin{prop} \label{shearing} 
There exists an endofunctor on graded spectra  
$$
\on{sh}: \on{Gr}(\Sp) \to \on{Gr}(\Sp) 
$$
given by 
$$
\on{sh}(M)_i := M_i[-2i]
$$
 with the following properties:
\begin{itemize}
    \item $\on{sh}$ is an equivalence, with inverse given by $\on{sh}^{-1}(M_i) = M_{i}[2i]$
    \item $\on{sh}$ admits an $E_2$-monoidal structure, with respect to the Day convolution product on $\on{Gr}(\Sp)$.
\end{itemize}
\end{prop}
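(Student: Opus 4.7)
The plan is to first construct $\on{sh}$ as a plain functor on $\on{Gr}(\Sp)$, to observe that invertibility is immediate from the product decomposition, and then to upgrade the construction to an $E_2$-monoidal equivalence by identifying shearing with the tensor action of a suitable graded invertible algebra. Concretely, I would use the product decomposition $\on{Gr}(\Sp)\simeq\prod_{i\in\Z}\Sp$ to define $\on{sh}$ componentwise as the $(-2i)$-fold desuspension in the $i$-th factor, so that by construction $\on{sh}(M)_i\simeq\Sigma^{-2i}M_i$; the candidate inverse $\on{sh}^{-1}$ is built in the same way using $+2i$-fold suspensions, and on each factor the composites $\Sigma^{2i}\circ\Sigma^{-2i}$ and $\Sigma^{-2i}\circ\Sigma^{2i}$ are naturally equivalent to $\id_{\Sp}$, which gives the claimed equivalence of $\infty$-categories.

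For the $E_2$-monoidal refinement my plan is to realise $\on{sh}$ as the action by Day convolution of a graded spectrum $T$ which is invertible in each weight with $T_n\simeq\Sigma^{-2n}\Sph$. Endowing $T$ with the structure of a graded $E_k$-algebra is equivalent to upgrading the additive character $\chi\co\Z\to\on{Pic}(\Sp)$ sending $1\mapsto\Sigma^{-2}\Sph$ to an $E_k$-monoid map (with $\Z$ carrying its additive $E_\infty$-structure), after which tensoring with $T$ promotes $\on{sh}$ to an $E_k$-monoidal endofunctor of $\on{Gr}(\Sp)$. The consistency at the level of underlying data is already easy to see: one has $(\on{sh}(M)\otdy\on{sh}(N))_k\simeq\bigvee_{i+j=k}\Sigma^{-2(i+j)}M_i\otimes N_j\simeq\Sigma^{-2k}(M\otdy N)_k$, so only the higher coherences remain.

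The principal obstacle is precisely this $E_2$-lift of $\chi$, equivalently the $E_2$-algebra structure on $T$. The conceptual input is that the element $\Sigma^{-2}\Sph\in\on{Pic}(\Sp)$ admits a coherent $E_2$-structure, which can be organised through the $J$-homomorphism together with the $E_2$-coherence of a suitable map $\mathbb{CP}^{\infty}\to B\on{gl}_1(\Sph)$ classifying the complex tautological line. Rather than redo the higher-coherence book-keeping, I would invoke the explicit constructions of this graded $E_2$-algebra carried out in \cite{raksit2020hochschild} and \cite{logstructures}, where the corresponding action on $\on{Gr}(\Sp)$ is shown to implement $\on{sh}$.
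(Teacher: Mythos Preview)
Your proposal is correct and follows essentially the same approach as the paper: both reduce the $E_2$-monoidal structure on $\on{sh}$ to the existence of an $E_2$-monoidal map $\Z\to\on{Pic}(\Sp)$ with $n\mapsto\Sph^{-2n}$, and both treat invertibility by exhibiting the obvious inverse. The only notable difference is packaging and citation: the paper invokes Lurie's construction of this $E_2$-map in \cite{lurie2015rotation} and builds $\on{sh}$ directly as the adjoint of $\Z\times\on{Gr}(\Sp)\xrightarrow{(\phi,\on{ev})}\on{Pic}(\Sp)\times\Sp\xrightarrow{\otimes}\Sp$, whereas you phrase the same move as Day-tensoring with a graded $E_2$-invertible object and point to \cite{raksit2020hochschild} and \cite{logstructures} for the coherence input.
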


\begin{proof}
This appears in the $\Z$-linear setting in  \cite{raksit2020hochschild} and is also studied in \cite{logstructures}. However, for the sake of completeness, we sketch the basic ideas underlying the construction. 
In \cite{lurie2015rotation}, Lurie constructs an $E_2$-monoidal map of spaces 
$$
\phi: \Z \to \on{Pic}(\Sp) 
$$
sending $n \mapsto  \Sph^{-2n}$.  We now define $\on{sh}$ as the functor, obtained by adjunction, from the
assignment 
$$
\Z \times \gr(\Sp) \to \Sp
$$
given by the composition 
$$
\Z \times \gr(\Sp) \xrightarrow{(\phi, \on{ev})}  \on{Pic}(\Sp) \times \Sp \xrightarrow{\otimes} \Sp.
$$
Here, the first map sends $(n,M) \mapsto (\phi(n),M_n)$. The fact that this latter composition is $E_2$ follows from the fact that $\phi$ is itself $E_2$. This further implies that $\on{sh}$ is itself $E_2$ monoidal.
To see that this is an equivalence, one displays, as in \cite{raksit2020hochschild}, an inverse in the same way by precomposing $\phi$ with the map $\z\xrightarrow{-1\cdot}\z$.
\end{proof}

\begin{var}\label{var higher shearing}
One can precompose the map $\phi: \Z \to \on{Pic}(\Sp)$ with the map $\cdot (k) : \Z \to \Z$. We denote the composition by 
$$
\phi^k: \Z \to \on{Pic}(\Sp)
$$
As in the above, we use this to define an endofunctor 
$$
\on{sh}^k : \on{Gr}(\Sp) \to \on{Gr}(\Sp).
$$
This acquires the same formal properties as above, e.g it will be an $E_2$-monoidal autoequivalence on $\on{Gr}(\Sp)$. Furthermore, one  has the description  $(\on{sh}^k M)_i \simeq M_i[-2ki]$.

\end{var}
\subsection{Sheared polynomial algebras} 
Recall that there exists an $E_\infty$ algebra $\sph[t] \in \on{Gr}(\Sp)$, which gives a graded enhancement of the ``flat" polynomial algebra. One can obtain this, for example, by observing that the restriction map from filtered spectra to graded spectra 
$$
\on{Res}: \on{Fil}(\Sp) \to  \on{Gr}(\Sp)
$$
is lax symmetric monoidal. In more detail, this will be the restriction 
$$
\on{Fil}(\Sp) = \on{Fun}((\Z, \leq), \Sp) \to \on{Fun}(\Z^{\on{ds}}, \Sp) = \on{Gr}(\Sp) 
$$
along $\Z \hookrightarrow (\Z, \leq)$ so that in particular we forget the structure maps of the filtration, cf \cite{lurie2015rotation}. We remind the reader that this is different from the associated graded functor. One then sets $\sph[t] := \on{Res}(\mathbf{1})$, where $1$ denotes the unit of the symmeric monoidal structure on $\on{Fil}(\Sp)$. Thus, $\mathbb{S}[t]$ (which is given by $\Sph$ in nonpositive weights and $0$ in positive weights) acquires the structure of  an $E_{\infty}$-algebra in graded spectra.

\begin{cons}\label{cons of graded e2 polynomials}
As described in Proposition \ref{shearing}, there exists an $E_2$-monoidal autoequivalence  $\on{sh}$. We set 
$$
\Sph[\sigma_2] := \on{sh}(\sph[t]) ,
$$
and more generally for $k>0$,
$$
\Sph[\sigma_{2k}] : =  \on{sh}^{k}(\sph[t]);
$$
that is, one applies $\on{sh}^{k}$ to $\sph[t]$ to obtain a family of $E_2$-algebras in graded spectra. For $k$=0, we set $\sph[\sigma_0] := \sph[t]$. It follows by inspection that the underlying graded $E_1$-ring of $\sph[\sigma_{2k}]$ is the free graded $E_1$-ring on $\sph^{2k}(-1)$ where $\sph^{2k}(-1)$ is $\sph^{2k}$ concentrated in  weight $-1$.
\end{cons}

\begin{rema}
For $w\in \z$ and even $k\geq 0$, we apply the functor $L_{-w}$ which left Kan extends along the multiplication map 
$$
\cdot (-w): \Z \to \Z
$$
to obtain weight sifted variants of $\sph[\sigma_k]$. We often omit $L_{-w}$ when the weight of $\sigma_k$ is clear from the context but when we wish to be explicit, we write $\Sph[\sigma_{k, w}]$ for the graded $E_2$-ring $L_{-w}\sph[\sigma_k]$ where $\sigma_{k,w}$ is in weight $w$. As before, the underlying graded $E_1$-ring of $\Sph[\sigma_{k, w}]$ is the free graded $E_1$-ring  $\on{Free}_{E_1}(  \Sph^{k}(w))$.  
\end{rema}

To adjoin roots, we often start with $\sphsmk$ and $\sphsk$ with $\sigma_{mk}$ and $\sigma_k$ in weights $m$ and $1$ respectively where $m>0$ and $k>0$ is even.

\begin{prop}\label{prop map between free spheres}
In the situation above, there exists a map of graded $E_2$-rings 
\[\sphsmk \to \sphsk\]
that carries $\sgmmk$ to $\sgmk^m$ in homotopy. This provides a map of  $m$-graded $E_2$-rings 
\[D^m(\sphsmk) \to D^m(\sphsk)\]
where $\sgmk \in \pis D^m(\sphsk)$ is of weight $1$ and $D^m(\sphsmk)$ is concentrated in weight $0$.

Furthermore,  we have  $D^m(\sphsmk)\simeq F(D(\sphsmk))$ as $m$-graded $E_2$-rings; here $F$ Kan extends through $0 \to \z/m$. 
\end{prop}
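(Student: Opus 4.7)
The strategy is to build the graded $E_2$-ring map by leveraging the fact that $\sphsmk$ and $\sphsk$ are both obtained from the $E_\infty$-graded-ring $\sph[t]$ via shearing and regrading. I will construct the map as the image under $\on{sh}^{k/2}$ of an $E_\infty$-map between regradings of $\sph[t]$; the $m$-graded statement will then follow by applying the symmetric monoidal quotient $D^m$, and the final equivalence will be a direct consequence of Lemma~\ref{lem underlying of conc in weight zero}.

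For the core construction, consider the adjunction $L_m \dashv R_m$ on $\on{Gr}(\sp)$ induced by the $E_\infty$-monoidal map $\cdot m\co \z \to \z$: here $L_m$ is strong symmetric monoidal and $R_m$ is lax. A direct computation shows $R_m(\sph[t]) \simeq \sph[t]$ as graded $E_\infty$-rings, since both are $\sph$ in nonpositive weights with the identity multiplication. By adjunction, this equivalence corresponds to an $E_\infty$-map $L_m\sph[t] \to \sph[t]$, which on homotopy is the ``$m$-th power'' on the generator in weight $-m$. Applying the $E_2$-monoidal autoequivalence $\on{sh}^{k/2}$, together with the natural identifications $\on{sh}^{k/2}\circ L_m \simeq L_m \circ \on{sh}^{mk/2}$ and the symmetric monoidal regrading $L_{-1}$, yields the desired graded $E_2$-ring map $\sphsmk \to \sphsk$. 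Tracking the generator through this construction, $\sgmmk$ maps to $\sgmk^m$.

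Applying $D^m$, which is symmetric monoidal, produces the induced $E_2$-map $D^m(\sphsmk) \to D^m(\sphsk)$ of $m$-graded $E_2$-rings. Since $\sgmk$ has weight $1$ in $\sphsk$, its image has weight $1\pmod m$ in $D^m(\sphsk)$. For $\sphsmk$, the underlying graded spectrum is supported only in weights of the form $jm$ (with the $jm$-th piece $\sph^{jmk}$ spanned by $\sgmmk^j$), so the quotient $\z \to \z/m$ sends the entire support to weight $0$, proving $D^m(\sphsmk)$ is concentrated in weight $0$. Invoking Lemma~\ref{lem underlying of conc in weight zero} together with the identity $D \circ D^m \simeq D$ (both are left Kan extension along $\z \to 0$), we obtain $F(D(\sphsmk)) \simeq D^m(\sphsmk)$, as asserted.

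The principal technical point is verifying that the formal adjunction counit $L_m\sph[t]\to\sph[t]$ indeed realizes $m$-th powering at the homotopy level, and checking that the natural transformations between $\on{sh}^{\bullet}$ and $L_{\pm m}$ are coherent enough at the $E_2$-level to guarantee the final map induces $\sgmmk\mapsto\sgmk^m$. This amounts to a careful but routine unwinding of the shearing and Kan extension constructions, which are recalled in Section~\ref{sect graded ring spectra} and the preceding subsections.
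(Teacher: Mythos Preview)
Your proposal is correct and follows essentially the same approach as the paper: both use the adjunction $L_m \dashv R_m$ and the counit to produce the map, then apply $D^m$ and invoke Lemma~\ref{lem underlying of conc in weight zero} for the last statement. The only minor difference is the order of operations: you first establish $R_m(\sph[t]) \simeq \sph[t]$ in the unsheared world and then apply $\on{sh}^{k/2}$ (which forces you to check the $E_2$-compatibility $\on{sh}^{k/2}\circ L_m \simeq L_m\circ \on{sh}^{mk/2}$), whereas the paper observes directly from the definition of $\phi^{mk}$ as the composite $\z\xrightarrow{\cdot m}\z\xrightarrow{\phi^k}\on{Pic}(\Sp)$ that $\Sph[\sigma_{mk,-1}] \simeq R_m(\Sph[\sigma_{k,-1}])$ already in the sheared world, and then takes the counit. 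The paper's ordering is marginally cleaner since it sidesteps the separate shearing/$L_m$ compatibility check, but the content is the same.
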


\begin{proof}
We first remind the reader that there is an identification 
$$
\Sph[\sigma_{mk,-1}] := \on{sh}^{mk}(\Sph[t]) \simeq R_m(\Sph[\sigma_{k,-1}])
$$
of graded $E_2$-rings. This follows from the very definition the $k$th shearing functor $\on{sh}^{k}$, in particular that it sends  $\Sph[t]$ to the negative weight part of the  graded spectrum given by the map 
$$
\phi^k: \Z  \xrightarrow{\times k} \Z \to \on{Pic}(\Sp). 
$$
Thus we may identify  $\Sph[\sigma_{mk,-1}]$ with $R_m\Sph[\sigma_{k,-1}] \simeq R_m\on{sh}^{k}(\Sph[t])$, negative weight part of the  graded spectrum given by the map 
$$
\phi^{mk}: \Z  \to \on{Pic}(\Sp). 
$$
Therefore, the counit of the adjunction $L_m \dashv R_m$ provides a map of graded $E_2$-rings
$$
\Sph[\sigma_{mk,-m}] \to \Sph[\sigma_{k,-1}].
$$
Applying the functor $L_{-1}$ to this map, we obtain the map of graded $E_2$-rings $\sphsmk \to \sphsk$ claimed in the proposition. 

The functor $D^m$ gives the desired map
$$
D^m(\Sph[\sigma_{mk}]) \to D^m(\Sph[\sigma_{k}])
$$
of $m$-graded $E_2$-algebras. Note that $\sigma_{mk} \in \pi_{*}D^m(\Sph[\sigma_{mk}])$ is of weight $0$, which ensures that $D^m(\sphsmk)$ is concentrated in weight $0$.

To see the last statement, we have
\begin{equation}\label{eq interm triple kan extension}
    F DD^m(\sphsmk) \simeq D^m(\sphsmk)
\end{equation}
due to Lemma \ref{lem underlying of conc in weight zero} since $D^m(\sphsmk)$ is conentrated in weight $0$. Furthermore, $D D^m$ is Kan extension through the composite $\z\to \z/m \to 0$ which is the same as the Kan extending through $\z\to 0$. Therefore, $DD^m(\sphsmk) \simeq D(\sphsmk)$. This, together with \eqref{eq interm triple kan extension} provides the desired equivalence $D^m(\sphsmk) \simeq F(D(\sphsmk))$.
\end{proof}
We often omit the functor $D^m$ in our notation and denote the map of $m$-graded $E_2$-rings $D^m(\sphsmk) \to D^m(\sphsk)$ as $\sphsmk \to \sphsk$.

\begin{rema}\label{rema for degree zero root adjunction}
To adjoin a root to a degree $0$ class, we need the $k=0$ case of the proposition above. In other words, we need an analogous $E_2$-map $\sphsmk \to \sphsk$ for $k=0$. For this, we start with the graded $E_2$-map $\sph[\sigma_{m2}] \to \sph[\sigma_2]$   and apply the functor $\text{sh}$. This procedure provides a graded $E_2$-map $\sph[\sigma_{{0},m}] \to \sph[\sigma_{0,1}]$ that carries $\sigma_{0,m}$ to $\sigma_{0,1}^m$ as desired. 
\end{rema}

\subsection{Cell structures on sheared polynomial algebras}
A key technical result for us will be the even cell decomposition of  $\sph[\sigma_{k}]$ \emph{as an $E_2$-algebra}. As we will see in the remainder of this section, this is what will allow for us to define $E_2$-algebra maps to a given $E_2$-algebra $A$, along which we will then adjoin roots.  
\begin{rema}
    In the second arxiv version of \cite{hahn2020redshift}, Hahn and Wilson also construct $E_2$ even cell decompositions 
on the free $E_1$-algebra $\sphsk$ using a Koszul duality argument for even $k \geq 0$. However, they removed this 
result in the later versions of their paper since they found a simpler argument for their redshift
results that avoid the use of these even cell decompositions. Since this 
does not appear in the published version of \cite{hahn2020redshift}, we give a proof of the $E_2$ even cell decompositions on 
$\sphsk$ that we use. We would like to note that our methods are different than the ones used in 
\cite[Arxiv version 2]{hahn2020redshift}.  

\end{rema}
Before doing this, we make precise what exactly we mean by even cell decomposition. The following notions are heavily inspired by Section 6.3 of \cite{galatius2018cellular}.

\begin{defi} \label{cellularalgebras}
Let $f: S \to R \in \Alg_{E_2}(\Sp^{\Z})$ be a map of $E_2$-algebras in graded spectra. We say $f$ has a \emph{filtered cellular decomposition} if there exists  a tower in $\Alg_{E_2}(\on{Fil}(\on{Fun}(\Z, \Sp)))$ for which 
$$
S  = \on{sk}_{-1}(f) \to \on{sk}_{0}(f) \to \on{sk}_{1}(f) \to \cdots  \to  \colim_{i}\on{sk}_i(f) =: \on{sk}(f) \simeq R
$$
such that each $\on{sk}_{i}(f)$ is obtained from $\on{sk}_{i-1(f)}$ via the following pushout diagram: 

\begin{equation*}
\begin{tikzcd}
    \on{Free}_{E_2}(\bigsqcup_{\alpha \in I_{n_i} }\partial D^{g_{\alpha},n_i}[i-1] ) \ar[r]\ar[d] & \on{sk}_{i-1}(f) \ar[d]\\
    \on{Free}_{E_2}( \bigsqcup_{\alpha \in I_{n_i} } D^{g_{\alpha},n_i}[i] ) \ar[r] & \on{sk}_{i}(f).
    \end{tikzcd}
\end{equation*}
The notation $X[n]$ in the above means that the object $X$ is placed in filtering degree $n$.  

In particular, in each degree $i$ of the tower we are adding  cells in increasing dimension $n_i$. Thus if $i \leq j$, then $n_i \leq n_j$, and $I_{n_i}$ refers to the set of $n_i$ cells of $R$. It may be the case, that $n_i = i$, but we do not require this for the sake of flexibility of the definition, which is a point of departure from the notion in \cite{galatius2018cellular}. If $f: \mathbf{1} \to R$ is the map from the unit, we call this a filtered cellular decomposition of $R$. 
\end{defi}

\begin{defi} \label{unfilteredcellular}
A map $f\co S \to R$ of graded $E_2$-rings admits a cell decomposition if it is the colimit of a tower 
$$
S = \on{sk}_{-1}(f) \to \on{sk}_{0}(f) \to \on{sk}_{1}(f) \to \cdots  \to  \colim_{i}\on{sk}_i(f)  \simeq R
$$
in graded $E_2$-rings where each stage is obtained from the previous via a cell attachement in graded $E_2$-rings. In particular, if $f$ admits a filtered cellular decomposition, taking levelwise colimits provides a cellular decomposition of $f$.

\end{defi}

Our first step is to establish the decomposition for $\sph[t]$.

\begin{prop} \label{evencellS[t]}
As an $E_2$-algebra in graded spectra, $\sph[t]$ admits a (filtered) cellular decomposition with cells in even degrees. 
\end{prop}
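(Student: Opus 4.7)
The plan is to inductively build a filtered cellular decomposition of $\sph[t]$ as a graded $E_2$-algebra, using only cells in even homological degrees. The argument proceeds via the obstruction theory for cellular $E_k$-algebras developed in \cite{galatius2018cellular}, reducing the construction to an evenness property of the derived $E_2$-indecomposables.

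For the base of the induction, I would set $\on{sk}_{-1}(f) := \mathbf{1}$ (the monoidal unit of graded spectra) and attach a single $0$-dimensional $E_2$-cell in weight $-1$ via the pushout
\[
\begin{tikzcd}
\on{Free}_{E_2}(\partial D^{-1,0}[-1]) \ar[r]\ar[d] & \mathbf{1} \ar[d]\\
\on{Free}_{E_2}(D^{-1,0}[0]) \ar[r] & \on{sk}_0,
\end{tikzcd}
\]
yielding $\on{sk}_0 \simeq \on{Free}_{E_2}(\Sph(-1))$ together with a canonical $E_2$-map $\on{sk}_0 \to \sph[t]$ sending the generator to the polynomial variable $t$. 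Inductively, supposing $\on{sk}_{i-1} \to \sph[t]$ has been built using $E_2$-cells of even dimension at most $2(i-1)$, I would attach $E_2$-cells of dimension $2i$ whose attaching maps represent a generating set for the first nonvanishing obstruction group; by the standard theory this group is a shift of the derived $E_2$-indecomposables of the cofiber of $\on{sk}_{i-1} \to \sph[t]$, and the colimit of the tower is $\sph[t]$.

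The main obstacle is the parity statement: I must show that at each stage the relative obstructions are concentrated in an odd degree $2i - 1$, so that attaching cells in even dimension $2i$ suffices to kill them. I would deduce this from the evenness of $\pi_{*,*}\sph[t]$, which is concentrated in homological degree $0$ with the polynomial generator $t$ placed in weight $-1$. A bar-type spectral sequence computing the derived $E_2$-indecomposables of $\sph[t]$ has $E_2$-page built from $\pi_{*,*}\sph[t]$ together with the $E_2$-operadic combinatorics of configurations in $\R^2$; both inputs are concentrated in even total degree under the parity convention inherited from $\sph[t]$, and the differentials preserve this parity. Hence the target is even, and propagating this through the inductive obstruction theory confines each relative obstruction group to odd total degree, which is precisely what is needed to carry out the even cell attachments.
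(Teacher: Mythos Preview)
Your overall architecture matches the paper's: build a skeletal tower in $\Alg_{E_2}(\on{Gr}(\Sp))$ using the obstruction/cell theory of \cite{galatius2018cellular}, with cells governed by the derived $E_2$-indecomposables (equivalently, $E_2$-homology) of $\sph[t]$. The paper invokes \cite[Theorem~11.21]{galatius2018cellular} to get a \emph{minimal} cell structure whose cells lie exactly in the bidegrees where $H^{E_2}_{g,d}(\sph[t];\Z)$ is nonzero, so the problem reduces---as you say---to showing these groups vanish in odd $d$.

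The gap is in your evenness argument. You assert that a ``bar-type spectral sequence'' has both inputs (the homotopy of $\sph[t]$ and the ``$E_2$-operadic combinatorics of configurations in $\R^2$'') concentrated in even total degree, and that differentials preserve parity. But the homology of the $E_2$ operad is \emph{not} concentrated in even degrees: already $H_1(\on{Conf}_2(\R^2);\Z)\cong\Z$ contributes the Browder bracket in degree~$1$, and more generally $H_*(\on{Conf}_n(\R^2))$ lives in degrees $0$ through $n-1$. So a bare parity argument from the operadic side cannot work without further input, and you have not supplied one. The vague appeal to ``the parity convention inherited from $\sph[t]$'' does not pin down a bigrading in which the relevant groups are even.

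The paper's substitute for this step is a concrete identification, not a spectral sequence estimate: by \cite[Theorem~13.7]{galatius2018cellular} the $E_2$-homology is (a shift of) the two-fold bar construction $\on{Bar}^{(2)}(\sph[t])$, and by \cite[Proposition~5.4.9]{lurie2015rotation} there is an equivalence $\on{Bar}^{(2)}(\sph[t])\simeq \on{gr}\big(\sph[\mathbb{C}P^n]_{n\ge 0}\big)$, the associated graded of the skeletal filtration on $\Sigma^\infty_+\mathbb{C}P^\infty$. Tensoring with $\Z$ gives a single cell in each bidegree $(-n,2n-2)$, which is visibly even. This identification with $\mathbb{C}P^\infty$ is the missing ingredient in your proposal; once you have it, the inductive cell attachment you describe goes through exactly as written.
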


\begin{proof}
The two key inputs for our argument are Theorem 11.21 and Theorem 13.7 of \cite{galatius2018cellular}. The former result applied to the map $f: 0 \to  I$  of  non-unital $E_2$-algebras in graded spectra (where $I$ is the augmentation ideal of the map  $\sph[t] \to \sph$)
says that there exists a relative CW decomposition
$$
0 \to \colim \on{sk}_n(f) \simeq I.
$$
Moreover, the proof of this fact in loc. cit. constructs a minimal cell structure, one that has the smallest possible number of cells in a given bidegree (the extra degree here arises since we are working in graded spectra). In particular the colimit they construct, $\colim \on{sk}_n(f)$, will have cells precisely in bidegree $b^{E_2}_{g,d}(\Sph[t]):= \on{dim}_k H^{E_2}_{g,d}(\sph[t], \sph, k) \in \mathbb{N} \cup \{ \infty\}$, where  $H^{\OO}_{g,d}(R ; k)$ is the $\OO$-homology of an $\OO$-algebra $R$, with coefficients in the ring $k$; we will in fact set $k = \Z$.

We sketch the argument given there applied to our particular case for the sake of completeness. 
One proceeds by inductively constructing a factorization
$$
0 = \on{sk}_{-1} \xrightarrow{h_0} \cdots \xrightarrow{h_\epsilon} \on{sk}_{\epsilon} \xrightarrow{f_{\epsilon}} I, 
$$
in the $\infty$-category of (increasingly) filtered objects of $\on{Fun}(\Z, \Sp)$. Here  $h_e: \on{sk}_{e-1} \to \on{sk}_{e}$ comes with the structure of a (filtered) CW attachment of dimension $e$, where $\on{sk}_{i}$ denotes the $i$th skeleton equipped with the skeletal filtration leading up to that degree. Taking the colimit along $\epsilon$ gives an induced map $f_\infty: \colim sk_\epsilon(f) \to I$, which is an equivalence. 

For the inductive step in their argument, they show that the Hurewicz map 
$$
\pi_{*, \epsilon}(\sph[t], \on{sk}_{\epsilon -1}) \to H_{*, \epsilon}^{E_2}(\sph[t],  \on{sk}_{\epsilon -1}; k) 
$$
from relative homotopy to relative $E_2$-homology with coefficients in $k$  is surjective. Using this, one is able to choose  a set of maps 
$$
\{E_\alpha: (D^{\epsilon}, \partial D^{\epsilon}) \to (\Sph[t](g), \on{sk}_{\epsilon -1}(g))\},
$$
whose images generate $H_{*, \epsilon}^{E_2}(\sph[t],  \on{sk}_{\epsilon -1}; k)$ as a $k$-module. 
The boundary maps are then used to attach filtered  cells  $(g, \epsilon)$ to $\on{sk}_{\epsilon -1}$ to form $\on{sk}_{\epsilon}$ and the corresponding $E_\alpha$ is used to extend $f_{\epsilon -1}$ to $f_{\epsilon}$.
Putting all this together, we see that the attachment of the cells is parameterized by the dimensions of the $E_2$-algebra homology groups with coefficients in $k$. 
To see, in our particular setup, that this cell decomposition is concentrated in even degrees, it is therefore enough to verify that $H^{E_2}_{g, e}(\sph[t], k)$ vanishes whenever $e \cong 1 \mod 2$. For this we apply \cite[Theorem 13.7]{galatius2018cellular} which states that the $k$-fold iterated bar construction of an $E_k$ algebra is equivalent to the $k$-suspension of the $E_k$-cotangent complex. By \cite[Proposition 5.4.9]{lurie2015rotation}, there is an equivalence  $\on{Bar}^{(2)}(\sph[t]) \simeq  \on{gr}(\sph[\mathbb{C}P^{n}]_{n \geq 0}) $ in graded spectra, where the right hand side is the associated graded of the filtration on spherical chains on $\mathbb{C}P^\infty$, with filtration induced by the skeletal filtration on infinite projective space. Tensoring this with $k = \Z$ in our particular situation, we obtain (a 2-fold shift) of chains on $\mathbb{C}P^\infty$ with coefficients in $\Z$ which has a cell in each bidegree $(-n, 2n-2)$. By taking into account units, we conclude that $\sph[t]$ may be constructed from $\sph$ by attaching the same cells. 
\end{proof}

\begin{rema}
We remark that we may take the levelwise colimit in the above filtered cellular decomposition to obtain an $E_2$ cellular decomposition for  $\sph \to \sph[t]$ in the sense of Definition \ref{unfilteredcellular}. 
\end{rema}

\begin{corr}
The degree zero piece of the above cellular decomposition is the free algebra $\on{Free}_{E_2}(\sph(-1))$, i.e. the free $E_2$-algebra with generator in degree $0$ and weight $-1$. Moreover, the map 
$f_0: \on{Free}_{E_2}((\sph(-1)) \to  \Sph[t]$ itself admits a cellular decomposition with even cells of positive dimensions 
\end{corr}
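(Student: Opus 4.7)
The plan is to re-enter the proof of Proposition \ref{evencellS[t]} and extract from it an explicit count of cells in each bidegree. Recall that the filtered cellular decomposition constructed there attaches cells in order of increasing homotopical dimension $e$, indexed by a basis of $H^{E_2}_{g,e}(\sph[t];\Z)$ in each weight $g$. Hence to identify $\on{sk}_0(f)$ and to analyse $f_0$, it suffices to read off the bigraded ranks of the $E_2$-homology of $\sph[t]$ from the iterated bar construction description obtained in that proof.

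First I would recall the identification $\on{Bar}^{(2)}(\sph[t]) \simeq \on{gr}(\sph[\mathbb{C}P^n]_{n \geq 0})$ combined with \cite[Theorem 13.7]{galatius2018cellular}, which relates $\on{Bar}^{(2)}$ to a double suspension of the $E_2$-cotangent complex. Tensoring with $\Z$ and accounting for the double suspension and for the weight conventions fixed by $t$ being in weight $-1$, one obtains a single generator of $H^{E_2}_{g,e}(\sph[t];\Z)$ in each bidegree $(g,e) = (-n, 2n-2)$ for $n \geq 1$, and nothing else.

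From this computation the corollary follows by inspection. In homotopical dimension $e = 0$ the only contribution comes from $n = 1$, which gives a unique cell of weight $-1$ in degree $0$. Thus the zeroth skeleton of the filtered cellular decomposition is obtained from the initial $E_2$-algebra $\sph$ by attaching exactly one such cell, giving $\on{sk}_0(f) \simeq \on{Free}_{E_2}(\sph(-1))$. For $n \geq 2$ all remaining cells sit in bidegrees $(-n, 2n-2)$ with $2n-2 \geq 2$, so the induced map $f_0 \co \on{Free}_{E_2}(\sph(-1)) \to \sph[t]$ admits a cellular decomposition in the sense of Definition \ref{unfilteredcellular} with cells concentrated in strictly positive even homotopical degrees.

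The only mild subtlety is book-keeping for the weight: one must check that the shift conventions in $\on{Bar}^{(2)}$ place the generator of $\mathbb{C}P^1$-chains in bidegree $(-1,0)$ rather than some other weight. This is where I expect to be the most careful, but it is forced by the fact that $\sph[t]$ is $\sph$ in nonpositive weights with $t$ in weight $-1$, so the bar construction is compatible with this normalization and the computation goes through as stated.
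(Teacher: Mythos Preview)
Your proposal is correct and takes essentially the same approach as the paper: both arguments extract from the proof of Proposition \ref{evencellS[t]} that the cells of the minimal decomposition sit in bidegrees $(-n,2n-2)$ for $n\geq 1$, and then read off that the unique $0$-cell is in weight $-1$ while all remaining cells have positive even dimension. The only cosmetic difference is that the paper phrases the identification of $\on{sk}_0$ via the explicit pushout square for attaching a $0$-cell (using $\on{Free}_{E_2}(\emptyset)\simeq \sph$ and $\on{Free}_{E_2}(D^0)\simeq \on{Free}_{E_2}(\sph(-1))$), whereas you recount the $E_2$-homology computation and conclude directly from the cell count.
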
 

\begin{proof}
In degree zero, we have the following pushout square in the $\on{Alg}_{E_2}(\on{Fun}(\z, \Sp))$:

\begin{equation*}
\begin{tikzcd}
    \sph \simeq \on{Free}_{E_2}(\emptyset) \ar[r]\ar[d] & \on{sk}_{-1}(\Sph[t]) \simeq \sph \ar[d]\\
    \on{Free}_{E_2}(\sph(-1)) \simeq \on{Free}_{E_2}( D^{0}  ) \ar[r] & \on{sk}_{0}(\Sph[t]).
    \end{tikzcd}
\end{equation*}
Since this is a pushout square we obtain an equivalence
$$
\on{sk}_0(\sph[t]) \simeq \on{Free}_{E_2}(\Sph(-1))
$$
Moreover, by starting in degree zero with the zero cells already attached, we may conclude that the map 
$$
f_0: \on{Free}_{E_2}(\Sph(-1)) \to \Sph[t].
$$
itself admits a cellular decomposition with even cells of positive dimension. 

\end{proof}

By the above corollary, we have a cellular decomposition on the map of graded $E_2$-rings $\on{Free}_{E_2}(\Sph(-1)) \to \sph[t]$. We can apply shearing to this map to obtain a map
$$
\on{Free}_{E_2}(\Sph^{k}(-1)) \to \Sph[\sigma_k].
$$

\begin{prop} \label{evencellS[sigma]}
Let $k>0$ be even. The map $f_0: \on{Free}_{E_2} \sph^{k}(-1) \to \sph[\sigma_k] $ admits a cell decomposition with cells concentrated in even degrees. Left Kan extending along the multiplication map  $\Z \xrightarrow{\times -w} \Z $, we conclude that 
$f_0: \on{Free}_{E_2} \sph^{k}(w) \to \sph[\sigma_{k,w}] $ admits a cell decomposition with cells concentrated in even degrees. 
\end{prop}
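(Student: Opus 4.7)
The plan is to transport the cellular decomposition of $\on{Free}_{E_2}(\sph(-1)) \to \sph[t]$ established in the preceding corollary along the $E_2$-monoidal autoequivalence $\on{sh}^{k/2}$ of $\on{Gr}(\Sp)$ from Variant \ref{var higher shearing}. Since $k$ is even by hypothesis, $k/2$ is a well-defined positive integer and $\on{sh}^{k/2}$ is available. By construction one has $\on{sh}^{k/2}(\sph[t]) \simeq \sph[\sigma_k]$, and since $\on{sh}^{k/2}$ is (even $E_2$-)symmetric monoidal, it commutes with the formation of free $E_2$-algebras, giving $\on{sh}^{k/2}(\on{Free}_{E_2}(\sph(-1))) \simeq \on{Free}_{E_2}(\sph^k(-1))$, where we identify the generator using $\on{sh}^{k/2}(\sph(-1))_{-1} \simeq \sph^{0-2(k/2)(-1)} \simeq \sph^k$.

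Because $\on{sh}^{k/2}$ is an equivalence of $\infty$-categories, it sends the cell attachment pushout squares of Definition \ref{cellularalgebras} that assemble the decomposition of $f_0 \co \on{Free}_{E_2}(\sph(-1)) \to \sph[t]$ to analogous pushout squares exhibiting a cell decomposition of the transported map $\on{Free}_{E_2}(\sph^k(-1)) \to \sph[\sigma_k]$. Under $\on{sh}^{k/2}$, a cell in homotopical dimension $d$ placed in weight $g$ is sent to a cell of homotopical dimension $d - kg$ in the same weight $g$. Since $d$ is even by the previous corollary and $kg$ is even for every integer $g$ because $k$ is even, the transported cells all lie in even dimensions, as required.

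For the weighted variant, we post-compose with the left Kan extension functor $L_{-w}$ along $\z \xrightarrow{\times(-w)} \z$. As recalled in Section \ref{subsec manipulations on grade objects}, $L_{-w}$ is a symmetric monoidal colimit-preserving endofunctor of $\on{Gr}(\Sp)$, hence it preserves free $E_2$-algebras and commutes with pushouts. Under $L_{-w}$, the generator $\sph^k(-1)$ maps to $\sph^k(w)$, as the weight $-1$ is multiplied to $(-1)(-w) = w$, and by definition $\sph[\sigma_k]$ maps to $\sph[\sigma_{k,w}]$. Homotopical dimensions of the attached cells are unaffected, so the resulting cell decomposition of $\on{Free}_{E_2}(\sph^k(w)) \to \sph[\sigma_{k,w}]$ still has cells concentrated in even dimensions.

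The argument is largely formal, and no serious obstacle arises: the substantive input is the even cell decomposition of $f_0 \co \on{Free}_{E_2}(\sph(-1)) \to \sph[t]$ provided by the preceding corollary. The only point requiring care is the parity of the dimension shift under shearing, which is precisely where the evenness hypothesis on $k$ enters.
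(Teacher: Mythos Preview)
Your proof is correct and follows essentially the same route as the paper: apply the $E_2$-monoidal autoequivalence $\on{sh}^{k/2}$ to the known even cell decomposition of $\on{Free}_{E_2}(\sph(-1)) \to \sph[t]$, using that it preserves pushouts and free $E_2$-algebras, and then left Kan extend along $L_{-w}$. You are in fact slightly more careful than the paper in explicitly checking that a cell in bidegree $(g,d)$ is sent to bidegree $(g,d-kg)$, so that evenness of $k$ ensures the transported cells remain in even homotopical dimension.
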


\begin{proof}
 By construction, $\Sph[t]$ may be written as a filtered colimit of a diagram of $E_{2}$ algebras, 
$$
\on{Free}_{E_2}(\sph(-1)) \to \on{sk}_1(f) \to \cdots \to \on{sk}_{i-1}(f) \to \on{sk}_{i}(f) \to  \cdots 
$$
where each $\on{sk}_{i}(f)$  is formed  as a pushout from  $\on{sk}_{i-1}$ along a map $\on{Free}_{E_2}(\sph^{2n+1}) \to \sph$.  
We may apply $\on{sh}^{k/2}$ to this diagram, and take note of the fact that this will commute with colimits along the filtered diagram, together with the free $E_2$-algebra functor. Thus we conclude with an even cell presentation for the induced map:
$$
\on{sh}^{k/2}(\on{Free}_{E_2}(\Sph(-1))) \simeq   \on{Free}_{E_2}(\Sph^k(-1)) \to \Sph[\sigma_{k}].
$$
By left Kan extending along the multiplication by $-w$ map on $\Z$ (i.e.\ applying $L_{-w}$), we conclude analogously for the map
$$
\on{Free}_{E_2}(\Sph^k(w)) \to \sph[\sigma_{k,w}] 
.$$
\end{proof}

\begin{prop} \label{prop map from free spherical to even spectra}
Let $A$ be a (graded) $E_2$-ring whose homotopy groups are concentrated in even degrees and let  $a \in \pi_{k} A$  be a weight $w$ class  for some even $k\geq0$. Then there  is a  (graded) $E_2$-ring map
$$
\sph[\sigma_{k,w}] \to A
$$
which carries $\sigma_{k}$ to $a$. 
\end{prop}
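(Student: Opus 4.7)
The plan is to construct the map by obstruction theory against the even $E_2$-cell decomposition of $\sph[\sigma_{k,w}]$ provided by Proposition \ref{evencellS[sigma]}. First, by the free--forgetful adjunction for $E_2$-algebras in (graded) spectra, the class $a\in\pi_k(A)$ of weight $w$ corresponds to a (graded) $E_2$-ring map $\on{Free}_{E_2}(\sph^k(w))\to A$ that sends the tautological generator to $a$. The task is then to extend this map along the canonical map $f_0\co\on{Free}_{E_2}(\sph^k(w))\to\sph[\sigma_{k,w}]$.

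By Proposition \ref{evencellS[sigma]}, $f_0$ can be written as the colimit of a tower
\[
\on{Free}_{E_2}(\sph^k(w))=\on{sk}_{-1}(f_0)\to\on{sk}_0(f_0)\to\on{sk}_1(f_0)\to\cdots
\]
in (graded) $E_2$-algebras, where each transition $\on{sk}_{i-1}(f_0)\to\on{sk}_i(f_0)$ is a pushout attaching free $E_2$-algebras on cells $D^{g_\alpha,w_\alpha}$ with $g_\alpha$ strictly positive and \emph{even}. The plan is to extend the map to $A$ inductively up this tower. At each stage, extending from $\on{sk}_{i-1}(f_0)\to A$ over an attached cell amounts to producing a nullhomotopy of the composite $\on{Free}_{E_2}(\partial D^{g_\alpha,w_\alpha})\to\on{sk}_{i-1}(f_0)\to A$; by the free--forgetful adjunction, this obstruction is detected by a class in $\pi_{g_\alpha-1}$ of the weight-$w_\alpha$ component of $A$. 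Since $g_\alpha$ is even, $g_\alpha-1$ is odd, and by hypothesis the homotopy of $A$ is concentrated in even degrees (in each weight component, in the graded case), so the obstruction group vanishes and an extension exists at every stage.

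Taking the colimit of the resulting compatible system of extensions will yield the desired (graded) $E_2$-ring map $\sph[\sigma_{k,w}]\to A$, sending $\sigma_k$ to $a$ by construction of the initial map from the free algebra. The main point to verify, and the only potential obstacle, is that the cell decomposition supplied by Proposition \ref{evencellS[sigma]} genuinely produces cells of even topological degree irrespective of their weight; this is inherited from the even cell decomposition of $\sph[t]$ in Proposition \ref{evencellS[t]} via the shearing autoequivalence $\on{sh}^{k/2}$ and Kan extension along $\z\xrightarrow{\cdot(-w)}\z$, neither of which affects topological parity.
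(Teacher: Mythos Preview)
Your proof is correct and follows essentially the same approach as the paper: start from the $E_2$-map $\on{Free}_{E_2}(\sph^k(w))\to A$ determined by $a$, then extend inductively along the even $E_2$-cell decomposition of $f_0$ from Proposition~\ref{evencellS[sigma]}, using that the obstructions land in odd homotopy groups of $A$ and hence vanish. Your explicit remark that shearing and the Kan extension along $\Z\xrightarrow{\cdot(-w)}\Z$ preserve the topological parity of the cells is a helpful addition.
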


\begin{proof}
By Proposition \ref{evencellS[sigma]}, the map 
$$
f: \on{Free}_{E_2}(\Sph^k(w)) \to   \sph[\sigma_{k,w}]
$$
admits an even cell decomposition.  
Let $a \in \pi_{k}(A)$ be as in the hypothesis of the proposition. This induces an $E_2$-algebra map $\on{Free}_{E_2}(\Sph^{k}(w)) \to A $, which we would like to extend inductively along the above tower. In order to do this, it is enough to  note that in degree $i$, we would need to trivialize the induced map $\on{Free}_{E_2}(\sph^{k + 2i -1}) \to A$.  Using the free/forgetful adjunction between $\on{Alg}_{E_2}(\on{Fun}(\Z, \Sp))$ and $\on{Fun}(\Z, \Sp)$ algebras and graded spectra, this will now follow from the fact that $\pi_{2n-1}(A)= 0$ for all $n$. 
\end{proof}

\begin{rema}
 In Remark \ref{rema for degree zero root adjunction}, we mentioned that we are going to use  $\on{sh}(\sph[\sigma_{m2,m}])$ to adjoin roots to degree $0$ classes. We remark that $\on{sh}(\sph[\sigma_{m2,m}])$ also satisfies the lifting property in the proposition above. This follows by the fact that the even cell decomposition for $\sph[\sigma_{m2,m}]$ provides an even cell decomposition for $\on{sh}(\sph[\sigma_{m2,m}])$  since $\on{sh}$ is an $E_2$-monoidal left adjoint functor.
\end{rema}

\begin{rema}
   We remark that another way to construct an $E_2$-algebra map $\Sph[t] \to A$ comes from the filtration on $\Sph[t]$ given its filtered cell decomposition. The mapping space 
    $$
    \on{Map}_{\on{Alg}_{E_2}}(\Sph[t], A)
    $$ 
    obtains a filtration from the filtation on the source; this will have associated graded 
    $$
\on{gr} \Map_{\Alg_{E_2}}(\sph[t], A ) \simeq \Map(\on{Free}_{E_2}(\bigoplus_{n \geq 1} \sph^{2n }, A)  \simeq \Map(\bigoplus_{n \geq 1} \sph^{2n }, A).  
$$
Now if $X$ has even homotopy groups, then so does the associated graded, so that the resulting spectral sequence computing the homotopy groups of the limit collapses. Thus,  
$a \in \pi_{0}(A)$ gives a class in $x \in \pi_0 \Map(\Sph^{2k}, X) \subset \pi_0 \Map(\bigoplus_{n \geq 1} \sph^{2n +2nk -2}, X) $, which will be an infinite cycle, and thus corresponds to an $E_2$-algebra map $\Sph[t] \to A$. We remark that this approach should allow for one to define maps 
$ \Sph[\sigma_k] \to A$ in the general case as well. We thank Oscar Randal-Williams for suggesting this approach. 
\end{rema}

\section{Adjoining roots and THH}\label{sect adjoining roots and thh}
Here, we introduce our construction for adjoining roots to ring spectra and prove our first results on the THH of ring spectra obtained through this construction.

\subsection{Background on algebras over $E_n$-algebras}\label{subsec background on algebras over en algebras}
Here is a quick background on some of the standard facts that we often use from \cite{lurie2016higher}. 

 For an $E_\infty$-algebra $R$ in a presentably symmetric monoidal $\infty$-category $\cat C$, the $\infty$-category of $E_n$ $R$-algebras is a symmetric monoidal $\infty$-category  with the pointwise tensor product \cite[Example 3.2.4.4]{lurie2016higher}. Therefore, for two $E_n$ $R$-algebras $A$ and $B$, $A \otimes_R B$ is an $E_n$ $R$-algebra. 

In this work, we often consider algebras over an $E_n$-algebra $R$ and in this case, the $\infty$-category of $E_m$ $R$-algebras (for $m\leq n-1$)  are not known to carry an appropriate $E_{n-1}$-monoidal structure. To work around this problem, we use the following facts which can be extracted from \cite[Corollary 4.8.5.20]{lurie2009higher}.

The $\infty$-category of (left) right $R$-modules is an $E_{n-1}$-monoidal $\infty$-category. We call an $E_m$-algebra in the $\infty$-category of right $R$-modules an $E_m$ $R$-algebra where $m\leq n-1$.

Furthermore, for  a map  $f\co R \to S$ of  $E_n$-algebras in $\C$,  one obtains an $E_{n-1}$-monoidal functor $-\otimes_{R} S$ between the respective $\infty$-categories of modules.  For every $m\leq n-1$, this induces a functor:
\begin{equation}\label{eq general extension of scalars}
- \otimes_{R} S \co \on{Alg}_{E_{m}}(\rmod_{R}) \to \alg_{E_{m}}(\rmod_{S}).
\end{equation}
 In particular, for an $E_m$ $R$-algebra $A$, $A \otimes_R S$ is an $E_{m}$ $S$-algebra. Furthermore, the forgetful functor induced by $f$, i.e.\ the right adjoint of $- \otimes_R S$, is $E_{n-1}$-lax monoidal and therefore it induces a functor:
 \begin{equation}\label{eq forgetful functor general}
     \on{Alg}_{E_{m}}(\rmod_{S}) \to \alg_{E_{m}}(\rmod_{R}).
 \end{equation}
 The unit of this adjunction provides a map of $E_m$ $R$-algebras:
 \begin{equation}\label{eq unit of the extension of scalars functor}
 A \to A \otimes_R  S.    
 \end{equation}
 
 Since $S$ is the monoidal unit in $\rmod_S$, $S$ is an $E_{n-1}$ $S$-algebra, and forgetting through \eqref{eq forgetful functor general}, it is an $E_{n-1}$ $R$-algebra. In summary, an $E_n$-algebra map $R \to S$ equips $S$ with the structure of an $E_{n-1}$ $R$-algebra. 

\subsection{A construction for adjoining roots to ring spectra}
We now introduce our construction for adjoining roots to ring spectra. For this we use the following hypothesis. Recall that we often omit the functor $D$ and let $\sph[\sigma_k]$ denote the underlying $E_2$-ring of the graded $E_2$-ring $\sphsk$.
\begin{hypo}[Root adjunction hypothesis]\label{hypo root adjunction}
Given an $E_1$-ring $A$ with an $a \in \pi_{mk}A$, there is  a chosen $\sphsmk$-algebra structure on $A$ for which the structure map $\sphsmk \to A$ carries $\sigma_{mk}$ to $a \in \pi_{mk}A$. Here, $m>0$ and $k\geq 0$ is even. See Proposition \ref{prop hypothesis} for the cases of interest where this is satisfied. 
\end{hypo}
The hypothesis above may not seem very natural but it is satisfied in the following general situations.
\begin{prop}\label{prop hypothesis}
Let  $k\geq 0$ be even and $m>0$, an $E_1$-ring $A$ satisfies Hypothesis \ref{hypo root adjunction} for $a \in \pi_{mk}A$ if:
\begin{enumerate}
    \item $A$ is an $E_2$-ring for which $\pis A$ is concentrated in even degrees, or
    \item $A$ is an $R$-algebra for an $E_2$-ring $R$ where $\pis R$ is concentrated in even degrees and $a$ is in the image of the map $\pis R \to \pis A$. 
\end{enumerate}
\end{prop}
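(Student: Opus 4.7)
The proposition asserts existence of an $\sph[\sigma_{mk}]$-algebra structure (meaning an $E_1$-algebra in right $\sph[\sigma_{mk}]$-modules, per Notation~\ref{notation introduction}) on $A$ with structure map carrying $\sigma_{mk}$ to $a$. The plan is to deduce both cases directly from Proposition~\ref{prop map from free spherical to even spectra} together with the formalism recalled in Section~\ref{subsec background on algebras over en algebras}. The key observation is that an $E_2$-ring map $\sph[\sigma_{mk}] \to B$ automatically endows $B$ with the structure of an $E_1$-algebra in right $\sph[\sigma_{mk}]$-modules, since for $n=2$ any $E_n$-algebra map equips its target with an $E_{n-1}$-algebra structure over its source.

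For case (1), since $A$ is an $E_2$-ring with $\pi_*A$ concentrated in even degrees and $mk$ is even, Proposition~\ref{prop map from free spherical to even spectra} (applied in the non-graded setting, omitting $D$) directly produces an $E_2$-ring map $\sph[\sigma_{mk}] \to A$ that sends $\sigma_{mk}$ to $a$. By the above, this gives $A$ the desired $\sph[\sigma_{mk}]$-algebra structure.

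For case (2), I would first choose a lift $\tilde{a} \in \pi_{mk}R$ of $a$ along the ring map $\pi_*R \to \pi_*A$. Applying Proposition~\ref{prop map from free spherical to even spectra} to the $E_2$-ring $R$ and the class $\tilde{a}$ yields an $E_2$-ring map $\sph[\sigma_{mk}] \to R$, which in turn upgrades $R$ to an $E_1$-algebra in right $\sph[\sigma_{mk}]$-modules. Since $A$ is by assumption an $E_1$-algebra in right $R$-modules, I would then apply the lax monoidal forgetful functor from right $R$-modules to right $\sph[\sigma_{mk}]$-modules (which exists because the extension of scalars $-\otimes_{\sph[\sigma_{mk}]} R$ is $E_1$-monoidal, see \eqref{eq forgetful functor general}) to view $A$ as an $E_1$-algebra in right $\sph[\sigma_{mk}]$-modules. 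The resulting structure map is the composite $\sph[\sigma_{mk}] \to R \to A$, which on $\pi_{mk}$ sends $\sigma_{mk} \mapsto \tilde{a} \mapsto a$, as required.

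Neither step presents a genuine obstacle: the content sits entirely in Proposition~\ref{prop map from free spherical to even spectra} (whose proof uses the even cell decomposition of $\sph[\sigma_k]$) and in the standard package of functorialities for algebras over $E_n$-algebras. The mild subtlety to keep track of is that "$\sph[\sigma_{mk}]$-algebra" throughout means $E_1$-algebra in right modules, so one only ever needs the $E_{n-1}=E_1$ part of the module-category structure arising from an $E_2$-map; this is precisely what~\cite[Corollary 4.8.5.20]{lurie2009higher} as cited in Section~\ref{subsec background on algebras over en algebras} supplies.
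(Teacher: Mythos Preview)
Your proposal is correct and matches the paper's proof essentially line for line: case~(2) is handled identically (lift $a$ to $R$, apply Proposition~\ref{prop map from free spherical to even spectra}, forget through the resulting $E_2$-map), and the only cosmetic difference is that the paper deduces case~(1) from case~(2) by taking $R=A$, whereas you prove~(1) directly.
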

\begin{proof}
Assume that $A$ is as in (2), let $r\in \pi_{mk}R$ detect $a$ through the map $\pis R \to \pis A$. We choose an $E_2$-ring map $g\co \sphsmk \to R$ that carries $\sigma_{mk}$ to $r$, see Proposition \ref{prop map from free spherical to even spectra}. Forgetting through $g$, see \eqref{eq forgetful functor general}, one obtains a $\sphsmk$-algebra structure on $A$. Indeed, through this structure, $\sigma_{mk}$ acts through $a$ as desired.

If $A$ is as in (1), then $A$ is an $A$-algebra and $A$ satisfies the assumption in (2). Therefore, $A$ satisfies Hypothesis \ref{hypo root adjunction}.
\end{proof}

For instance, the Morava $\kth$-theory spectrum $K(n)$ and all $E_1$ $MU_{(p)}$-algebra forms of $\bpn$ satisfy Hypothesis \ref{hypo root adjunction} with respect to their non-negative degree homotopy classes.

 Notice that we are not assuming any preexisting non-trivial grading on $A$; in fact this will allow us to view it as an $m$-graded spectrum concentrated in weight zero. Given $a \in \pi_{mk} A$, the following construction adjoins an $m$-root to $a$.

\begin{cons} \label{cons adjroots} 
Assume Hypothesis \ref{hypo root adjunction}. We consider $\sphsmk$ as an $m$-graded $E_2$-ring and $A$ as an $m$-graded $\sphsmk$-algebra, both concentrated in weight $0$, using the functor $F$ from Section \ref{subsec manipulations on grade objects}; we omit $F$ in our notation. 

Due to Proposition \ref{prop map between free spheres} (Remark \ref{rema for degree zero root adjunction} for $k=0$), there is a map  \[\phi: \sphsmk \to \sphsk\]
of $m$-graded $E_2$-rings that carries $\sigma_{mk}$ to $\sigma_k^m$ in homotopy where $\sigma_k$ is of weight $1$ and $\sigma_{mk}$ is of weight $0$. Note that we omit the functor $D^m$ in our notation. Considering the corresponding extension of scalars functor $- \wdg_{\sphsmk} \sphsk$ between the $\infty$-categories of $m$-graded $\sphsmk$-algebras and $m$-graded $\sphsk$-algebras, (see \eqref{eq general extension of scalars}), we define the $m$-graded $E_1$ $\sphsk$-algebra $A(\sqrt[m]{a})$ through:
\[A(\sqrt[m]{a}):= A \wdg_{\sphsmk} \sphsk.\]
This comes equipped with a map  $A \to \artmx$ of $m$-graded $E_1$ $\sphsmk$-algebras, see \eqref{eq unit of the extension of scalars functor}.

 Since $\pis (\sphsk)$ is free as a $\pis (\sphsmk)$-module, one obtains an isomorphism of rings: 
\[\pis A(\sqrt[m]{a})\cong \pis (A)[z]/(z^m-a).\]
 Therefore, we say \textbf{$A(\sqrt[m]{a})$ is obtained from $A$ by adjoining an $m$-root to $a$}.

When $A$ is $p$-local, observe that we have and equivalence of $m$-graded $E_1$ $\sphsk$-algebras:
\[A(\sqrt[m]{a}) \simeq A \wdg_{\sphpsmk} \sphpsk,\]
where $\sph_{(p)}[\sigma_i]$ denotes the $p$-localization of $\sph[\sigma_i]$.  
\end{cons}

It follows that the weight pieces of $\artmx$ are given by the following 
\begin{equation}\label{eq weight pieces of root adj}
\artmx_i \simeq \Sigma^{ik}A
\end{equation}
for each $0\leq i<m$.

Note that $\artmx$ might possibly depend on the $\sph[\sigma_{mk}]$-algebra structure chosen on $A$. Therefore, everytime we apply Construction \ref{cons adjroots}, we fix an $\sph[\sigma_{mk}]$-algebra structure on $A$.

\begin{rema}\label{rema e2rings root adj are xalgebras}
If $A$ is an $E_3$-ring with even homotopy, one may start with an $E_2$-map $\sphsmk \to A$ for a given $a \in \pi_{mk}A$ with $k\geq 0$. By extending scalars, one obtains an $E_2$-ring map $A \wdg \sphsmk \to A$ that equips $A$ with the structure of an $A \wdg \sphsmk$-algebra. Through this, $\artmx$ is weakly equivalent as an $m$-graded $\sphsk$-algebra to 
\[A \wdg_{A \wdg \sphsmk} A \wdg \sphsk.\]
In particular, $\artmx$ admits the structure of an $m$-graded $A \wdg \sphsk$-algebra.
\end{rema}

\begin{rema}
 In general, we do not expect the root adjunction $A\ \to \artmx$ to satisfy a universal property. On the other hand, if $A$ is an $E_3$-ring, $\artmx$ is an $A$-algebra and the map $\pis A \to \pis \artmx$ is \'etale, then it follows by \cite[Theorem 1.10]{hesselholt2022dirac} that there is a bijection between  homotopy classes of $A$-algebra maps $\artmx \to B$ and  $\pis A$-algebra maps $\pis \artmx \to \pis B$ for any \'etale $A$-algebra $B$.
\end{rema}
For the following, we fix an $E_2$-map $\sphpl[\sigma_{2(p-1)}] \to \ell$ carrying $\sigma_{2(p-1)}$ to $v_1$.
\begin{theo}\label{theo complex kthry as root adjunction}
There is an equivalence 
\[ku_p\simeq \ell_p(\sqrt[p-1]{v_1})\]
of $E_1$ $\ell_p$-algebras.
\end{theo}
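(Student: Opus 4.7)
The plan is to produce an $E_1$ $\ell_p$-algebra map $\ell_p(\sqrt[p-1]{v_1}) \to ku_p$ and show it is a $\pi_*$-isomorphism. With $m=p-1$ and $k=2$, one has $mk=2(p-1)$ and $a=v_1 \in \pi_{2(p-1)}\ell_p$. The fixed $E_2$-map $\sph_{(p)}[\sigma_{2(p-1)}] \to \ell_p$ carrying $\sigma_{2(p-1)} \mapsto v_1$ supplies Hypothesis~\ref{hypo root adjunction} for the pair $(\ell_p, v_1)$, so the object $\ell_p(\sqrt[p-1]{v_1}) := \ell_p \wedge_{\sph_{(p)}[\sigma_{2(p-1)}]} \sph_{(p)}[\sigma_2]$ of Construction~\ref{cons adjroots} is defined.

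Next, since $ku_p$ is an $E_\infty$-ring whose homotopy is concentrated in even degrees and $u \in \pi_2 ku_p$, Proposition~\ref{prop map from free spherical to even spectra} produces an $E_2$-algebra map $\varphi\co \sph_{(p)}[\sigma_2] \to ku_p$ with $\sigma_2 \mapsto u$. Composing with the $E_2$-map $\psi \co \sph_{(p)}[\sigma_{2(p-1)}] \to \sph_{(p)}[\sigma_2]$ of Proposition~\ref{prop map between free spheres} (which sends $\sigma_{2(p-1)} \mapsto \sigma_2^{p-1}$) yields an $E_2$-map $\sph_{(p)}[\sigma_{2(p-1)}] \to ku_p$ sending $\sigma_{2(p-1)} \mapsto u^{p-1}$. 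On the other hand, the Adams inclusion $\ell_p \to ku_p$ is $E_\infty$, and post-composing it with the fixed map $\sph_{(p)}[\sigma_{2(p-1)}] \to \ell_p$ produces a second $E_2$-map $\sph_{(p)}[\sigma_{2(p-1)}] \to ku_p$ again sending $\sigma_{2(p-1)} \mapsto v_1 \mapsto u^{p-1}$.

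The critical step is to promote the agreement of these two $E_2$-maps on $\pi_*$ to a genuine homotopy of $E_2$-algebra maps. For this I would reapply the cellular argument of Proposition~\ref{prop map from free spherical to even spectra}: by Proposition~\ref{evencellS[sigma]} the map $\on{Free}_{E_2}(\sph^{2(p-1)}) \to \sph_{(p)}[\sigma_{2(p-1)}]$ admits an $E_2$-cell decomposition in which all cells are even-dimensional. The two candidate lifts agree on the free $E_2$-algebra $\on{Free}_{E_2}(\sph^{2(p-1)})$ (since they agree on $\pi_{2(p-1)}$), and at each subsequent stage the obstructions to comparing lifts live in $\pi_{\textup{odd}}(ku_p) = 0$. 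Hence the two $E_2$-maps are homotopic, yielding a commutative square of $E_2$-algebras whose underlying square of $E_1$-algebras induces the desired $E_1$ $\ell_p$-algebra map $\Phi \co \ell_p(\sqrt[p-1]{v_1}) \to ku_p$.

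Finally, on homotopy the map $\Phi_*$ is
\[
\Z_p[v_1][z]/(z^{p-1}-v_1) \longrightarrow \Z_p[u], \qquad z \mapsto u, \ v_1 \mapsto u^{p-1},
\]
manifestly an isomorphism, so $\Phi$ is an equivalence of $E_1$ $\ell_p$-algebras. I expect the main obstacle to lie in the third step: rigorously carrying out the obstruction-theoretic comparison of the two $E_2$-algebra maps, which requires careful bookkeeping along the even cellular tower of Proposition~\ref{evencellS[sigma]} and is the point at which the evenness of $\pi_* ku_p$ is crucially used.
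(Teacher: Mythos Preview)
Your approach is different from the paper's and contains a genuine gap in the step you yourself flag as delicate.

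The paper does not attempt to construct a comparison map directly. Instead it passes to the periodic setting: by Remark~\ref{rema e2rings root adj are xalgebras}, $\ell_p(\sqrt[p-1]{v_1})$ is an $\ell_p$-algebra, and base-changing along $\ell_p \to L_p$ produces an $E_1$ $L_p$-algebra whose $\pi_*$ is \'etale over $\pi_* L_p$ and isomorphic to $\pi_* KU_p$. Hesselholt--Pstr\k{a}gowski's \'etale rigidity theorem \cite[Theorem~1.10]{hesselholt2022dirac} then forces $\ell_p(\sqrt[p-1]{v_1}) \wedge_{\ell_p} L_p \simeq KU_p$ as $L_p$-algebras, and taking connective covers in $E_1$ $\ell_p$-algebras yields the result. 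No obstruction theory for maps out of $\sphsmk$ is needed.

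Your obstruction argument fails as stated. Along the even $E_2$-cell decomposition of Proposition~\ref{evencellS[sigma]}, attaching an $E_2$-cell of dimension $d$ (even) gives a fiber sequence
\[
\map_{E_2}(\on{sk}_i, ku_p) \to \map_{E_2}(\on{sk}_{i-1}, ku_p) \to \map_{\Sp}(\sph^{d-1}, ku_p),
\]
so the obstruction to \emph{extending} a map lies in $\pi_{d-1}(ku_p)$, which vanishes. But the obstruction to showing that two extensions are homotopic is governed by $\pi_1$ of the base, i.e.\ by $\pi_d(ku_p)$, which for $d$ even is $\mathbb{Z}_p$ and does not vanish. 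Thus two $E_2$-maps $\sphsmk \to ku_p$ agreeing on $\pi_*$ need not be homotopic, and your square need not commute. A secondary issue is that even granting the square, you would obtain an $E_1$ $\sphsk$-algebra map, and promoting it to an $E_1$ $\ell_p$-algebra map (as the statement requires, via Remark~\ref{rema e2rings root adj are xalgebras}) requires additional care you have not addressed.
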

\begin{proof}
By Remark \ref{rema e2rings root adj are xalgebras} above, $\ell_p(\sqrt[p-1]{v_1})$ is an $\ell_p$-algebra. Let $L_p$ denote the non-connective $p$-completed Adams summand. The $E_1$ $L_p$-algebra
\[\ell_p(\sqrt[p-1]{v_1}) \wdg_{\ell_p} L_p\]
is an \'etale $E_1$ $L_p$-algebra in the sense of Hesselholt-Pstragowski \cite{hesselholt2022dirac} and there is an isomorphism of $\pis (L_p)$-algebras
\[\pis (\ell_p(\sqrt[p-1]{v_1}) \wdg_{\ell_p} L_p) \cong \pis (KU_p).\]
 It follows by \cite[Theorem 1.10]{hesselholt2022dirac} that there is an equivalence of $L_p$-algebras \[\ell_p(\sqrt[p-1]{v_1}) \wdg_{\ell_p} L_p \simeq KU_p.\] Through this, $\ell_p(\sqrt[p-1]{v_1})$ serves as the connective cover of $KU_p$ in $E_1$ $\ell_p$-algebras. Hence, there is an equivalence of $E_1$ $\ell_p$-algebras $ku_p \simeq \ell_p(\sqrt[p-1]{v_1})$.
\end{proof}

In Theorem \ref{theo morava E theories as root adjunction} we show that the Morava $E$-theory spectrum $E_n$ is given by $\sphwpn \wdg_{\sphp} \widehat{E(n)}(\sqrt[p^n-1]{v_n})$ as an $E_1$-ring where $\widehat{E(n)}$ is the $K(n)$-localized Johnson-Wilson spectrum. 
\begin{rema}\label{rema bpnadj root is ethree}
In certain cases, it is possible to equip $\artmx$ with the structure of an $E_n$-algebra for $n>1$. For this, one may use the graded $E_\infty$ $MU$-algebra  $MU[\sigma_k]$  \cite[Construction 2.6.1]{hahn2020redshift} where $k> 0$ is even.  Indeed, $MU[\sigma_k]$  is the free graded $E_1$ $MU$-algebra over $\Sigma^kMU$. There is a map of graded $E_\infty$-rings 
\[MU[\sigma_{2(p^n-1)}] := L_{p^n-1}R_{p^n-1} MU[\sigma_2]   \to MU[\sigma_2]\] and Proposition \ref{prop map from free spherical to even spectra} provides maps $\sphsk \to MU[\sigma_k]$ of graded $E_2$-rings.

It follows by \cite[Remark 2.1.2]{hahn2020redshift} that a form of $\bpn$ admits the structure of an $E_3$ $MU[\sigma_{2(p^n-1)}$]-algebra where $\sigma_{2(p^n-1)}$ acts through $v_n \in \pis \bpn$.  We obtain an equivalence of $p^n-1$-graded $E_1$ $\sph[\sigma_2]$-algebras 
\[\bpn(\sqrt[p^n-1]{v_n}) := \bpn \wdg_{\sph[\sigma_{2(p^n-1)}]}\sph[\sigma_2] \simeq \bpn \wdg_{MU[\sigma_{2(p^n-1)}]} MU[\sigma_2].\]
This equips $\bpn(\sqrt[p^n-1]{v_n})$ with the structure of a $p^n-1$-graded $E_3$ $MU[\sigma_2]$-algebra.

\end{rema}
\subsection{The weight zero piece of THH}

Here, we prove our first result regarding the topological Hochschild homology of the ring spectra obtained via root adjunction. Namely,  we show that   $\thh(A(\sqrt[m]{a}))$ contains $\thh(A)$ as a summand whenever $A$ is $p$-local and $p \nmid m$. 


It follows by \cite[Example A.10]{antieau2020beilinson} that for an $m$-graded $E_1$-ring $Y$, the  $m$-grading on $\thh(Y)$ is obtained by applying the cyclic bar construction $b_\bullet(Y)$ of $Y$  in the $\infty$-category of $m$-graded spectra. In simplicial level $s$ and weight  $i$, the $m$-graded cyclic bar construction of $Y$ is given by the following.
\[b_s(Y)_i \simeq \bigvee_{k_0+ \cdots +k_s =i \in \z/m}Y_{k_0} \wdg \cdots \wdg Y_{k_s}\]
Due to \cite[Corollary A.15]{antieau2020beilinson}, one has the following equality
\begin{equation}\label{eq graded thh to non graded thh}
    \thh(D(Y)) \simeq D(\thh(Y))
\end{equation}
where the functor $D(-)$ provides the underlying spectrum as usual; we often omit $D$ in our notation. Furthermore, $\thh(Y)$ is an $S^1$-equivariant $m$-graded spectrum in a canonical way and the equivalence above  is an equivalence of $S^1$-equivariant spectra.

\begin{cons}\label{cons thh of an algebra is a module}

Let $R$ be an $E_2$-ring and let $S$ be an $E_1$ $R$-algebra. For us this will mean that the pair $(R,S) \in \rmod^{(2)}(\Sp)$, where 
$$
\rmod^{(2)}(\mathcal{C})= \Alg(\rmod(\mathcal{C}))
$$
for an arbitrary symmetric monoidal $\infty$-category $\mathcal{C}$. Here, $\rmod(\mathcal{C})$ is the $\infty$-category of pairs $(A,M)$ where $A$ is an $E_1$-algebra and $M \in \rmod_A(\mathcal{C})$. Thus, objects in $\rmod^{(2)}(\mathcal{C})$ may be identified with pairs $(A,M)$ where $A$ is an $E_2$-algebra, and $M$ is an $E_1$ A-algebra in $\mathcal{C}$.   

We remark that in general, $\rmod^{(2)}(\mathcal{C})$ may be written as $\Alg_{\mathcal{O}}(\mathcal{C})$ where $\mathcal{O}$ is the \emph{tensor product} of operads 
$$
\mathcal{O} := \rmod \times E_1
$$
This tensor product of operads, studied in depth in \cite[Section 2.2.5]{lurie2016higher} is symmetric and satisfies the following universal property at the level of algebra objects:
$$
\Alg_{\mathcal{O}}(\mathcal{C})  \simeq \Alg_{E_1}(\Alg_{\rmod}(\mathcal{C})) \simeq \Alg_{\rmod}(\Alg_{E_1}(\mathcal{C}))
$$
Hence, applying the discussion to $\mathcal{C} = \Sp $ and  $R$ and $S$ as above, we may view $S$ as a right $R$-module  in $E_1$-algebras. 

Since $\thh$ is a symmetric monoidal functor from $E_1$-rings to spectra \cite[Section \RomanNumeralCaps{4}.2]{nikolausscholze2018topologicalcyclic} we deduce that $\thh(S)$ is a right $\thh(R)$-module.

\end{cons}

\begin{prop}\label{prop thh of relative smash product} 
Let  $F$ be an $m$-graded $E_1$ $E$-algebra and $E\to F'$ be a map of $m$-graded $E_2$-rings. There is a natural equivalence of $m$-graded right $\thh(F')$-modules in $S^1$-equivariant spectra:
\[\thh(F\wdg_E F') \simeq \thh(F) \wdg_{\thh(E)} \thh(F'),\]
whose underlying undgraded equivalence is that of  right  $\thh(F')$-modules in cyclotomic spectra.
If $E$ and $F$ are concentrated in weight zero, then we have the following.
\[\thh(F\wdg_E F')_i \simeq \thh(F) \wdg_{\thh(E)} (\thh(F')_i)\]
\end{prop}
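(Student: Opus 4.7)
The plan is to reduce everything to the symmetric monoidality of $\thh$ and the fact that a symmetric monoidal functor preserves two-sided bar constructions. First I would invoke that $\thh$ is a symmetric monoidal functor from $E_1$-rings to cyclotomic spectra, as in \cite[Section IV.2]{nikolausscholze2018topologicalcyclic}, and (via \cite[Example A.10, Corollary A.15]{antieau2020beilinson}) that its $m$-graded refinement is computed pointwise via the $m$-graded cyclic bar construction and is compatible with the forgetful functor $D$. Combined with Construction \ref{cons thh of an algebra is a module}, this produces the module structures we need for free.

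For the main equivalence, I would write the relative smash product as the geometric realization of the two-sided bar construction,
\[F \wdg_E F' \simeq |B_\bullet(F,E,F')|, \quad B_s(F,E,F') = F \wdg E^{\wdg s} \wdg F',\]
which is a simplicial object of $m$-graded $E_1$-rings. Since $\thh$ preserves sifted colimits and is (graded, cyclotomic) symmetric monoidal, applying it commutes with both the realization and the smash products at each simplicial level, yielding
\[\thh(F \wdg_E F') \simeq |B_\bullet(\thh(F), \thh(E), \thh(F'))| \simeq \thh(F) \wdg_{\thh(E)} \thh(F')\]
as $m$-graded $S^1$-equivariant right $\thh(F')$-modules, whose underlying ungraded equivalence (obtained by applying $D$, which is symmetric monoidal and commutes with $\thh$) refines to one of cyclotomic right $\thh(F')$-modules.

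For the last formula, assume $E$ and $F$ are concentrated in weight zero. Then their graded cyclic bar constructions are concentrated in weight zero as well, so by Lemma \ref{lem underlying of conc in weight zero}, $\thh(E)$ and $\thh(F)$ are concentrated in weight zero. Unpacking the Day convolution at each simplicial degree $s$ of the bar construction defining $\thh(F) \wdg_{\thh(E)} \thh(F')$, only the summand where the $\thh(F)$ and $\thh(E)$-factors sit in weight $0$ contributes in weight $i$, so
\[\bigl(\thh(F) \wdg \thh(E)^{\wdg s} \wdg \thh(F')\bigr)_i \simeq \thh(F) \wdg \thh(E)^{\wdg s} \wdg \thh(F')_i.\]
Since the weight-$i$ functor commutes with geometric realizations, passing to realizations gives $\thh(F) \wdg_{\thh(E)} (\thh(F')_i)$, as desired.

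The mathematical content is essentially a consequence of symmetric monoidality, so the main obstacle is not conceptual but bookkeeping: ensuring that the symmetric monoidal structure, the $m$-grading, the $S^1$-action, the cyclotomic structure, and the right $\thh(F')$-module structure are all tracked coherently through the bar construction argument. This is addressed by appealing to the symmetric monoidality of $\thh$ at the level of cyclotomic spectra and to the graded description of $\thh$ recalled from \cite{antieau2020beilinson}.
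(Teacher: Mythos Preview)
Your proposal is correct and follows essentially the same approach as the paper: both arguments rest on the symmetric monoidality of $\thh$ (valued in cyclotomic spectra) together with its preservation of sifted colimits, applied to the two-sided bar construction computing the relative smash product. Your treatment of the final weight-$i$ formula is in fact slightly more detailed than the paper's, which simply asserts the consequence once $E$ and $F$ are concentrated in weight zero.
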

\begin{proof}

Let us recall that the functor 
$$
\thh: \on{Alg}_{\Sp} \to \on{CycSp} 
$$ 
is symmetric monoidal. Furthermore, it commutes with sifted colimits; indeed this can be seen from the fact that it can be decomposed into a composition of functors comprised of taking tensor products and realizations of simplicial objects, both of which commute with sifted colimits. Thus there will be a natural equivalence 
\begin{align*}
   \thh(F \wdg_{E} F') &\simeq  \thh( ||\on{Bar}_\bullet(F,E,F')||)  \\
   &\simeq
    ||\on{Bar}_{\bullet}(\thh(F)_\bullet,
  \thh(E)_\bullet, \thh(F')_\bullet)|| \\
  &\simeq \thh(F) \wdg_{\thh(E)}\thh(F') 
\end{align*}
This allows us to deduce that $\thh$ preserves the
sifted colimit given by the double sided Bar construction; this can be computed at the level of underlying spectra by the bilinear pairing
$$
{}_{F}\on{BMod}_E \times {}_{E}\on{BMod}_{F'} \to _{F} \on{BMod}_{F'} 
$$
corresponding to the relative tensor product. Furthermore, as $\thh$ preserves the sifted colimits corresponding to this relative tensor product, the above equivalence is compatible with right $\thh(F')$ module structures. 
The analogous claims all hold when accounting for additional gradings, by recalling that $\thh$ promotes to a sifted colimit preserving symmetric monoidal functor from algebras in graded spectra to $S^1$-equivariant objects in graded spectra.   
In particular, if $E$ and $F$ are concentrated in weight zero, we deduce the equivalence 
$$\thh(F\wdg_E F')_i \simeq \thh(F) \wdg_{\thh(E)} (\thh(F')_i).
$$
of graded $\thh(F)$-modules. 
\end{proof}
\begin{rema}
The $m=1$ case of the proposition above provides the non-graded case. 
\end{rema}
One may consider $\sphsk$  as an $E_1$-ring obtained by adjoining an $m$-root to $\sphsmk$.  Proposition \ref{prop spherical root degree zero thh} identifies the weight zero piece of $\thh(\sphpsk)$. Before Proposition \ref{prop spherical root degree zero thh}, we state and prove a well known fact.

\begin{lemm}\label{lemm map of bounded below spectra}
Let $\varphi \co M \to N$ be a map between bounded below spectra. Then $\varphi$ is an equivalence if and only if $\hz \wdg \varphi$ is an equivalence. If furthermore $M$ and $N$ are $p$-local, then $\varphi$ is an equivalence if and only if $\hzpl \wdg \varphi$ is an equivalence.
\end{lemm}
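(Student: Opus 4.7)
The plan is to reduce both statements to the classical fact that a bounded-below spectrum with vanishing integral homology is contractible. First I would form the cofibre $C := \on{cofib}(\varphi)$; since $M$ and $N$ are bounded below, so is $C$. Because smashing with $H\Z$ preserves cofibre sequences, $\hz \wdg \varphi$ is an equivalence if and only if $\hz \wdg C \simeq 0$, and similarly for the $p$-local statement. Thus the lemma amounts to the implication: if $C$ is bounded below and $\hz \wdg C \simeq 0$, then $C \simeq 0$ (and the corresponding $p$-local version).

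For the integral case, I would argue by a Postnikov / Hurewicz argument. Say $C$ is $(n-1)$-connected; then the stable Hurewicz theorem gives an isomorphism $\pi_n C \xrightarrow{\cong} H_n(C;\Z) = \pi_n(\hz \wdg C)$. If $\hz \wdg C \simeq 0$, then $\pi_n C = 0$, so $C$ is actually $n$-connected. Iterating this, $C$ is $k$-connected for every $k$, and since $C$ is bounded below this forces $C \simeq 0$.

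For the $p$-local statement I would reduce it to the integral one. If $M$ and $N$ are $p$-local, then $C$ is $p$-local, hence $\hz \wdg C$ is $p$-local (as a module over the $p$-local spectrum $C$), so $\hz \wdg C \simeq (\hz \wdg C)_{(p)} \simeq \hzpl \wdg C$. Therefore $\hzpl \wdg \varphi$ is an equivalence if and only if $\hz \wdg \varphi$ is, and the conclusion follows from the integral case.

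The only mildly delicate step is the last one, verifying that after $p$-localizing the target we still detect equivalences; but once one notes that smashing with a $p$-local spectrum automatically $p$-localizes, this is immediate. No serious obstacle is expected; the argument is essentially the standard Whitehead-type criterion for bounded-below spectra via integral homology.
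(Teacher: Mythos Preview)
Your proof is correct and follows essentially the same approach as the paper's: reduce to showing that the (co)fibre is contractible by detecting its bottom homotopy group via integral homology, then handle the $p$-local case by noting $H\Z \wedge C \simeq H\Z_{(p)} \wedge C$ when $C$ is $p$-local. The only cosmetic differences are that the paper works with the fibre rather than the cofibre and invokes the Tor spectral sequence of \cite{elmendorf2007rings} where you invoke the stable Hurewicz theorem; these are equivalent inputs here.
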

\begin{proof}
Let $\kth$ be the fiber of $\varphi$ and let $i$ be the smallest $i$ such that $\pi_iK \neq 0$. Due to the Tor spectral sequence of \cite[Theorem \RomanNumeralCaps{4}.4.1]{elmendorf2007rings}, we have $\pi_i (\hz \wdg K)= \pi_iK$. Therefore, if $\hz \wdg K \simeq 0$ then $K \simeq 0$ and $\varphi$ is an equivalence.

If $M$ and $N$ are $p$-local, then $\varphi$ is an equivalence if and only if $\sphpl \wdg \varphi$ is an equivalence. It follows by the previous result that $\varphi$ is an equivalence if and only if $\hz \wdg \sphpl \wdg \varphi \simeq \hzpl \wdg \varphi$ is an equivalence.
\end{proof}

For the following, let $k\geq0$ be even and let $m>1$. Furthermore, fix a map of  $m$-graded $E_2$-rings $\sphpsmk \to \sphpsk$ provided by Proposition \ref{prop map between free spheres} (Remark \ref{rema for degree zero root adjunction} for $k=0$). 
\begin{prop}\label{prop spherical root degree zero thh}
In the situation above, assume that $p \nmid m$. The induced map 
\[ \thh(\sphpsmk)_0  \xrightarrow{\simeq} \thh(\sphpsk)_0\]
is an equivalence of $E_1$-rings. Since $\sphpsmk$ is concentrated in weight zero, we obtain the following chain of equivalences
\[D(\thh(\sphpsmk)) \simeq \thh(\sphpsmk)_0  \xrightarrow{\simeq} \thh(\sphpsk)_0\]
of $E_1$-rings using Lemma \ref{lem underlying of conc in weight zero}.
\end{prop}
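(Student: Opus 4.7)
The plan is to compute both sides of the map explicitly using the classical identification of $\thh$ of a free $E_1$-algebra as $\bigvee_{n\geq0}(X^{\wedge n})_{hC_n}$, valid in $m$-graded spectra with $C_n$ acting by cyclic permutation, and then to verify the resulting map of wedges is a $p$-local equivalence summand by summand using Lemma \ref{lemm map of bounded below spectra}.

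First I would observe that $\sphpsmk$ is the underlying $E_1$-ring of the free $E_1$-algebra in $m$-graded $p$-local spectra on $\Sigma^{mk}\sphpl$ placed in weight $0\in\z/m$, while $\sphpsk$ is the free $E_1$-algebra on $\Sigma^{k}\sphpl$ in weight $1\in\z/m$. Since $k$ is even, the cyclic permutation action on $(\Sigma^{k}\sphpl)^{\wedge n}\simeq \Sigma^{kn}\sphpl$ is trivial, so the graded free $E_1$ formula gives, after picking off weight $0\in\z/m$,
\begin{equation*}
\thh(\sphpsmk)_0 \simeq \bigvee_{n\geq 0}\Sigma^{mkn}\sphpl[BC_n],\qquad \thh(\sphpsk)_0 \simeq \bigvee_{n\geq 0}\Sigma^{mkn}\sphpl[BC_{mn}],
\end{equation*}
where on the right we retain only those summands $\Sigma^{kn'}\sphpl[BC_{n'}]$ with $n'\equiv 0\pmod m$, reindexing $n'=mn$.

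Next, I would identify the induced map summand by summand. Because $\sigma_{mk}$ is sent to $\sigma_k^m$, a ``cyclic word'' $\sigma_{mk}^{n}$ of length $n$ corresponds to the cyclic word $\sigma_k^{mn}$ of length $mn$ whose cyclic symmetry is exactly the index-$m$ subgroup $C_n\subset C_{mn}$ (permutation of $n$ blocks of length $m$). Hence the map on the $n$-th summand is the natural map
\begin{equation*}
\Sigma^{mkn}\sphpl[BC_n]\longrightarrow \Sigma^{mkn}\sphpl[BC_{mn}]
\end{equation*}
induced by the inclusion $C_n\hookrightarrow C_{mn}$, $1\mapsto m$.

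Finally, I would show that each such map is an equivalence. As both sides are bounded below and $p$-local, by Lemma \ref{lemm map of bounded below spectra} it suffices to verify that $H_*(BC_n;\zpl)\to H_*(BC_{mn};\zpl)$ is an isomorphism for every $n\geq 0$. Writing $n=p^a q$ with $\gcd(p,q)=1$, the hypothesis $p\nmid m$ yields $mn=p^a(mq)$ with $\gcd(p,mq)=1$; the Chinese remainder decompositions $BC_n\simeq BC_{p^a}\times BC_q$ and $BC_{mn}\simeq BC_{p^a}\times BC_{mq}$, combined with the vanishing of reduced $\zpl$-homology of $BC_r$ when $\gcd(p,r)=1$, reduce the claim to the restriction to $p$-parts, which is multiplication by $m$ on $C_{p^a}$ — an isomorphism since $m$ is a unit modulo $p^a$. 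The main delicate point is the explicit identification of the induced map at the level of THH summands, in particular keeping careful track of the weight of $\sigma_k$ versus $\sigma_{mk}$ in $\z/m$; once that is in hand, the final $p$-local homology check is a direct computation.
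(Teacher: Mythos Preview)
Your argument rests on the formula $\thh(T(X))\simeq\bigvee_{n\geq0}(X^{\wedge n})_{hC_n}$ for the free $E_1$-algebra $T(X)$, and this formula is not correct. The weight-$n$ summand of $\thh(T(X))$ is the realization of the cyclic bar construction in weight $n$, which is $X^{\wedge n}\wedge_{C_n}(S^1)_+$ (with $C_n$ acting freely on $S^1$ by rotation), or equivalently $\mathrm{cofib}\big(X^{\wedge n}\xrightarrow{1-\tau}X^{\wedge n}\big)$; it is \emph{not} the homotopy orbits $(X^{\wedge n})_{hC_n}=X^{\wedge n}\wedge_{C_n}(EC_n)_+$. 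You can see the discrepancy already for $X=\sph$: then $T(X)=\sph[t]=\Sigma^\infty_+\mathbb N$, so $\thh(\sph[t])=\Sigma^\infty_+ B^{\mathrm{cyc}}(\mathbb N)$ and the weight-$n$ piece is $\Sigma^\infty_+S^1$ for $n\geq1$, whereas your formula would give $\Sigma^\infty_+BC_n$. Equivalently, your formula predicts $\hzpl_*\thh(\sphpsk)$ has $p$-torsion in infinitely many degrees in each weight divisible by $p$, contradicting the HKR answer $\zpl[\sigma_k]\otimes\Lambda(d\sigma_k)$. Relatedly, your claim that the cyclic permutation action on $(\Sigma^k\sphpl)^{\wedge n}$ is trivial is false as a statement about Borel $C_n$-spectra: for $k$ even the generator acts by a degree $+1$ self-map, hence is homotopic to the identity, but the action is not coherently trivial (for instance $(\sph^2)^{\wedge 2}$ with the swap is the Thom spectrum of $2+2\sigma$ over $BC_2$, which has $Sq^2$ nonzero on its Thom class and so is not $\Sigma^4\Sigma^\infty_+BC_2$).

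Your strategy can be repaired, and once repaired it is quite close to the paper's argument. With the correct formula each weight-$n$ summand is $\mathrm{cofib}(1-\tau)$ on $\sph^{mkn}_{(p)}$; since $k$ is even the map $1-\tau$ is null, so each summand has $\hzpl$-homology $\zpl$ in degrees $mkn$ and $mkn{+}1$. The induced map comes from the commuting square with vertical maps $\mathrm{id}$ and $1+\tau_{mn}+\cdots+\tau_{mn}^{m-1}$ (using $\tau_n=\tau_{mn}^m$), so on $\hzpl_*$ it is the identity in degree $mkn$ and multiplication by $m$ in degree $mkn{+}1$; invoking Lemma~\ref{lemm map of bounded below spectra} and $p\nmid m$ finishes. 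This is exactly the computation the paper does after base-change to $\hzpl$ via the B\"okstedt spectral sequence: $\sigma_{mk}^{\,i}\mapsto\sigma_k^{\,mi}$ and $\sigma_{mk}^{\,i}d\sigma_{mk}\mapsto m\,\sigma_k^{\,(i+1)m-1}d\sigma_k$. So the two routes differ only in packaging (direct cofiber description versus HKR/B\"okstedt), and the substantive calculation is the same.
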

\begin{proof}
It suffices to prove that
the map 
\[ \hzpl \wdg \thh(\sphpsmk)_0  \to \hzpl \wdg \thh(\sphpsk)_0\]
is an equivalence, see Lemma \ref{lemm map of bounded below spectra}.

By the base change formula for THH,  this is equivalent to the following map 
\begin{equation*}
    \thh^{\hzpl}(\hzplsmk) \to \thh^{\hzpl}(\hzplsk)
\end{equation*}
being an equivalence in weight zero. Here,  $\hzplsk$ denotes the free $\hzpl$-algebra on $\sphpl^k$ given by $\hzpl \wdg \sphpsk$. 

The map $\hzplsmk \to \hzplsk$ induces a map 
\[\phi^r \co E^r \to {F}^r\] 
from the  B\"okstedt spectral sequence  computing $\thh^{\hzpl}(\hzplsmk)$ to the B\"okstedt spectral sequence computing  $\thh^{\hzpl}(\hzplsk)$.  Since the weight grading on the THH of an $m$-graded ring spectrum comes from a weight grading on the corresponding cyclic bar construction, the B\"okstedt spectral sequence computing THH of an $m$-graded ring spectrum admits an $m$-grading, i.e.\ it splits into $m$ summands in a canonical way. Therefore, in our situation, it is sufficient to show that $\phi^2$  is an isomorphism on weight zero. 

We have
\begin{equation}\label{eq bokstedt ss for polynomial dgas}
    \phi^2 \co \zpl[\sgmmk]\otimes \Lambda_{\zpl}(d(\sgmmk)) \to \zpl[\sgmk] \otimes \Lambda_{\zpl}(d(\sgmk))
\end{equation}
where $d$  denotes the Connes operator. The degrees of the classes above are given by the following.  \[\text{deg}(\sgmmk) = (0,mk)\  \ \   \text{deg}(d(\sgmmk)) = (1,mk)\]  \[\text{deg}(\sgmk) = (0,k) \textup{\ \ \ \  } \text{deg}(d(\sgmk)) = (1,k)\] 
Furthermore, $\sgmmk$ and $d(\sgmmk)$ are in weight $0$ and $\sgmk$ and $d(\sgmk)$ are in weight $1$. In particular, all of $E^2$ is weight zero and the weight zero piece of $F^2$ is the $\zpl$-module generated by the classes $\sigma_k^{im}$ and $\sigma_k^{(i+1)m-1}d(\sigma_k)$ over $i \geq 0$. 

Since $\phi^2(\sgmmk)  = \sgmk^{m}$, we obtain that  
\[\phi^2(d(\sgmmk)) = d (\phi^2(\sigma_{mk})) =  d(\sigma_k^m)=  m \sgmk^{m-1}d(\sgmk).\]
Therefore, we have 
\[\phi^2(\sigma_{mk}^i) = \sigma_k^{im}\textup{\ \ \ and\  \ } \phi^2(\sigma_{mk}^id(\sigma_{mk})) = m \sigma_k^{(i+1)m-1} d(\sigma_k).\]
Since $p\nmid m$, we have that $m$ is a unit.  Using this, one observes that $\phi^2$ is an isomorphism after restricting and corestricting to weight zero as desired.
\end{proof}

 In the situation of Hypothesis  \ref{hypo root adjunction}, $A \to \artmx$ is a map of $m$-graded $E_1$-rings and $A$ is concentrated in weight zero. Therefore, there is a map
\begin{equation}\label{eq weight zero inclusion thh referee}
    \thh(A) \to \thh(\artmx)_0
\end{equation}
 where $A$ above denotes the underlying $E_1$-ring of $A$. 

\begin{theo}\label{theo weight zero of THH after root adjunction}
Assume Hypothesis \ref{hypo root adjunction} and that $A$ is $p$-local and $p\nmid m$. Then \eqref{eq weight zero inclusion thh referee} is an equivalence.
\end{theo}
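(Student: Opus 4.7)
The plan is to deduce this directly by combining the weight-zero computation for spherical root adjunction (Proposition \ref{prop spherical root degree zero thh}) with the base change formula for THH (Proposition \ref{prop thh of relative smash product}), both of which have already been established. Since $A$ is $p$-local, Construction \ref{cons adjroots} gives an equivalence of $m$-graded $E_1$ $\sphpsk$-algebras
\begin{equation*}
\artmx \simeq A \wdgsphpsmk \sphpsk,
\end{equation*}
where both $A$ and $\sphpsmk$ are concentrated in weight zero. I would therefore first apply Proposition \ref{prop thh of relative smash product} (to the map $\sphpsmk \to \sphpsk$ of $m$-graded $E_2$-rings and the $m$-graded $\sphpsmk$-algebra $A$) and read off the weight-zero piece to obtain
\begin{equation*}
\thh(\artmx)_0 \simeq \thh(A) \wdg_{\thh(\sphpsmk)} \thh(\sphpsk)_0
\end{equation*}
as right $\thh(\sphpsk)_0$-modules in $S^1$-equivariant spectra, with the map \eqref{eq weight zero inclusion thh referee} corresponding, via naturality in the unit $A \to \artmx$, to the base change of the unit $\thh(\sphpsmk) \to \thh(\sphpsk)_0$.

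Next, I would feed in the hypothesis $p \nmid m$ via Proposition \ref{prop spherical root degree zero thh}: under this assumption, the induced map $\thh(\sphpsmk) \to \thh(\sphpsk)_0$ is an equivalence of $E_1$-rings. Substituting this equivalence into the displayed formula collapses the relative smash product:
\begin{equation*}
\thh(A) \wdg_{\thh(\sphpsmk)} \thh(\sphpsk)_0 \simeq \thh(A) \wdg_{\thh(\sphpsmk)} \thh(\sphpsmk) \simeq \thh(A),
\end{equation*}
and by the identification of \eqref{eq weight zero inclusion thh referee} above, the map of the theorem is precisely this composite equivalence.

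The only subtle point (and the one that has to be checked carefully rather than just invoked) is the compatibility of algebraic structures: Proposition \ref{prop spherical root degree zero thh} must be used at the level of $E_1$-rings so that the resulting base change of $E_1$-algebras is an honest equivalence of relative smash products, and Proposition \ref{prop thh of relative smash product} must be applied in a way that outputs a module equivalence in the $m$-graded setting so that passing to weight zero is legitimate. Both are already packaged in the statements we are assuming, so the argument reduces to stringing these together, with the $p$-locality of $A$ allowing us to replace $\sph[\sigma_{?}]$ by $\sph_{(p)}[\sigma_{?}]$ without loss throughout.
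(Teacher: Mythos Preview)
Your proposal is correct and follows essentially the same route as the paper: apply Proposition~\ref{prop thh of relative smash product} to identify $\thh(\artmx)_0$ with $\thh(A)\wedge_{\thh(\sphpsmk)}\thh(\sphpsk)_0$, then invoke Proposition~\ref{prop spherical root degree zero thh} to collapse the relative smash product. The paper's proof is a bit terser but uses the same two inputs in the same order.
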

\begin{proof}
Recall that $\artmx$ is given by  
\[A \wdgsphpsmk \sphpsk\]
where $A$ and $\sphpsmk$ are
concentrated in weight zero.
Due to Proposition \ref{prop thh of relative smash product}, we have 
\[\thh(\artmx)_0 \simeq \thh(A) \wdg_{\thh(\sphpsmk)} (\thh(\sphpsk)_0)\]
and it follows by Proposition \ref{prop spherical root degree zero thh} that the map  
\[\thh(\sphpsmk)\xrightarrow{\simeq} \thh(\sphpsk)_0\]
is an equivalence. This identifies $\thh(\artmx)_0$ with $\thh(A)$ as desired.
\end{proof}

\section{Adjoining roots and algebraic \texorpdfstring{$K$}{K}-theory}\label{sect adj roots and algebraic k theory}

We now prove Theorem \ref{theo INTRO root adj k theory inclusion} from the introduction. For the rest of this section, assume Hypothesis \ref{hypo root adjunction}. We established that $\artmx$ is an $m$-graded ring spectrum and therefore $\thh(\artmx)$ is an $S^1$-equivariant $m$-graded spectrum, see \eqref{eq graded thh to non graded thh}. One might define $\ntc(\artmx)$ as an $m$-graded spectrum given by: 
\[\ntc(\artmx)_i \simeq \thh(\artmx)_i^{hS^1}.\]

Since $m$ is finite, the underlying spectrum of an $m$-graded spectrum, provided by the functor $D$, is given by a finite coproduct which is equivalent to the corresponding finite product. In particular, $D$ commutes with all limits and colimits. Because of this, we have
\[D(\ntc(\artmx)) \simeq \ntc(D(\artmx))\]
and therefore, we often omit $D$ in our notation.

Similarly, $\tp(\artmx)$ and $(\thh(\artmx)^{tC_p})^{hS^1}$ admit the structure of $m$-graded spectra and these constructions commute with the functor $D$ as above. 
Combining this, with Theorem \ref{theo weight zero of THH after root adjunction}, we obtain the following result. 

\begin{theo}\label{theo degree 0 of negative TC and TP after root adjunction}
Assume Hypothesis \ref{hypo root adjunction}, that $A$ is $p$-local, and that $p\nmid m$. The maps
\begin{gather*}
    \ntc(A) \xrightarrow{\simeq} \ntc(\artmx)_0\\
    \tp(A) \xrightarrow{\simeq} \tp(\artmx)_0\\
    (\thh(A)^{tC_p})^{hS^1}  \xrightarrow{\simeq} ((\thh(\artmx)^{tC_p})^{hS^1})_0
    \end{gather*}
 induced by $A \to \artmx$ are all equivalences.    \qed
\end{theo}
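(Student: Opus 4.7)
The plan is to deduce all three equivalences as a formal consequence of Theorem \ref{theo weight zero of THH after root adjunction} by observing that the weight-zero evaluation functor commutes with the equivariant constructions defining $\ntc$, $\tp$ and $((-)^{tC_p})^{hS^1}$.

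First I would note that the map $\thh(A) \to \thh(\artmx)_0$ of Theorem \ref{theo weight zero of THH after root adjunction} is in fact a map of $S^1$-equivariant spectra (indeed of cyclotomic spectra). This is because $A\to \artmx$ is a map of $m$-graded $E_1$-rings, $\thh$ of an $m$-graded $E_1$-ring is naturally an $m$-graded $S^1$-equivariant spectrum via the $m$-graded cyclic bar construction, and the resulting weight grading commutes with the $S^1$-action. Evaluation at weight zero inherits the $S^1$-action from $\thh(\artmx)$, and the map from $\thh(A)$ is equivariant.

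Next I would observe that the weight-zero evaluation functor
\[
(-)_0 \co \on{Fun}(\Z/m,\sp) \to \sp
\]
preserves all small limits and colimits. Since $\Z/m$ is discrete and finite, $\on{Fun}(\Z/m,\sp)\simeq \prod_{i\in \Z/m} \sp$, and $(-)_0$ is projection onto one factor, which admits both a left and a right adjoint. Consequently $(-)_0$ commutes with the formation of $(-)^{hS^1}$, $(-)^{tS^1}$, $(-)^{hC_p}$ and $(-)^{tC_p}$, as well as with compositions thereof. In particular, $\ntc(\artmx)_0 \simeq \bigl(\thh(\artmx)_0\bigr)^{hS^1}$ and $\tp(\artmx)_0 \simeq \bigl(\thh(\artmx)_0\bigr)^{tS^1}$, and similarly for $((\thh(\artmx)^{tC_p})^{hS^1})_0$.

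Finally, I would apply the functors $(-)^{hS^1}$, $(-)^{tS^1}$ and $((-)^{tC_p})^{hS^1}$ to the $S^1$-equivariant equivalence $\thh(A)\xrightarrow{\simeq}\thh(\artmx)_0$ established in Step~1 to obtain the three equivalences of the statement. There is no real obstacle here; the only care required is ensuring in Step~1 that the equivalence of Theorem \ref{theo weight zero of THH after root adjunction} is equivariant and not merely an equivalence of underlying spectra, which amounts to tracing that all constructions entering the proof (the $m$-graded cyclic bar construction, the base-change equivalence of Proposition \ref{prop thh of relative smash product}) are carried out in $S^1$-equivariant $m$-graded spectra.
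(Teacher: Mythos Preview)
Your proposal is correct and follows essentially the same approach as the paper. The paper's proof is just the \qed following the observation, made immediately before the statement, that the weight-piece functors commute with all limits and colimits (since $m$ is finite), together with the $S^1$-equivariant equivalence of Theorem \ref{theo weight zero of THH after root adjunction}; you have simply spelled out these steps more explicitly.
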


When $A$ is connective and $p$-local, $\artmx$ is also connective and $p$-local. By \cite[Corollary 1.5]{nikolausscholze2018topologicalcyclic} (and the discussion afterwards), the topological cyclic homology of $\artmx$ is defined via the following fiber sequence.
\begin{equation}\label{eq nikolaus scholze fiber sequence}
    \tc(\artmx)  \to \thh(\artmx)^{hS^1} \xrightarrow{\varphi_p^{hS^1}-can} (\thh(\artmx)^{tC_p})^{hS^1}
\end{equation}
As mentioned above, the middle term and the third term above admit canonical splittings into $m$-cofactors. Furthermore, $can$  respects this splitting since it only depends on the $S^1$-equivariant structure of $\thh(\artmx)$.

However, $\tc(\artmx)$ do not necessarily split into $m$-cofactors. This is due to the fact that the Frobenius map does not necessarily respect the grading. Indeed, the Frobenius is given by maps 
\begin{equation*}
    \varphi_p \co \thh(\artmx)_i \to \thh(\artmx)^{tC_p}_{ip},
\end{equation*}
see \cite[Corollary A.9]{antieau2020beilinson}. On the other hand, we obtain the following splitting of $\tc(\artmx)$.

\begin{cons}\label{cons weight zero splitting of TC}
Assume Hypothesis \ref{hypo root adjunction} and that  $A$ is connective and $p$-local where $p \nmid m$.  In this situation, $p$ is a non-zero divisor in $\z/m$. Therefore, the Frobenius map on $\thh(\artmx)$ carries pieces of non-zero weight to non-zero weight pieces. Moreover, $\varphi_p$ carries weight zero to weight zero. Therefore, the map $\varphi_p -can$ splits as a coproduct of their restriction to weight zero and their restriction to non-zero weight. In particular, the fiber sequence in   \eqref{eq nikolaus scholze fiber sequence} admits a splitting as follows. 
\begin{multline*}
\tc(\artmx)_0 \vee \tc(\artmx)_1 \to \thh(\artmx)_0^{hS^1}\vee\thh(\artmx)_{>0}^{hS^1} \\ \xrightarrow{(\varphi_p)_0 -can_0 \vee (\varphi_p)_{>0} - can_{>0}} (\thh(\artmx)_0^{tC_p})^{hS^1} \vee (\thh(\artmx)^{tC_p})_{>0}^{hS^1}
\end{multline*}
Here, $(-)_{>0}$ denotes restriction to weight not equal to $0$. We have 
\begin{equation}\label{eq tc splitting in const}
\tc(\artmx) \simeq \tc(\artmx)_0 \vee \tc(\artmx)_1
\end{equation}
where $\tc(\artmx)_0$ denotes the fiber of the map $(\varphi_p)_0 - can_0$ and $\tc(\artmx)_1$ denotes the fiber of the map $(\varphi_p)_{>0} - can_{>0}$.
\end{cons}

\begin{rema}
There are interesting cases where one obtains further splittings of the topological cyclic homology spectrum $\tc(\artmx)$. For instance, if  $p =1$ in $\z/m$, then and one obtains that $\tc(\artmx)$ splits into $m$-summands. This happens to be the case when $m=p-1$ or when $p$ is odd and $m=2$. We exploit this in Construction \ref{cons splitting of kthry ku} to obtain a splitting of $\tc(ku_p)$ into $p-1$ summands. Moreover, if $m = p^n-1$, then one obtains an underlying $p-1$-grading of $\thh(\artmx)$ by Kan extending through $\z/(p^n-1) \to \z/(p-1)$. This provides a  $p-1$-grading for  $\tc(\artmx)$.
\end{rema}
\begin{theo}\label{theo weight zero splitting for tc}
Assume Hypothesis \ref{hypo root adjunction} with $p\nmid m$ and that $A$ is $p$-local and connective.  Under the equivalence \eqref{eq tc splitting in const}, the canonical map \[\tc(A) \to \tc(\artmx)\]
is equivalent to the inclusion of the first wedge summand. 
\end{theo}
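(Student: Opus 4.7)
The plan is to carry the weight-zero splitting of Construction \ref{cons weight zero splitting of TC} through the defining fiber sequence of $\tc$, using the equivalences already established in Theorem \ref{theo degree 0 of negative TC and TP after root adjunction}. More precisely, I would view the map $A \to \artmx$ as a map of $m$-graded $E_1$-rings, where $A$ is placed in weight zero. Since $\thh$, as well as the functors $(-)^{hS^1}$ and $(-)^{tC_p}$, refine to the $m$-graded setting, the map $A \to \artmx$ induces a map from the defining fiber sequence for $\tc(A)$ into the one for $\tc(\artmx)$. The latter sits inside the weight-decomposed fiber sequence exhibited in Construction \ref{cons weight zero splitting of TC}, and since $A$ is concentrated in weight zero, this induced map factors through the weight-zero summand.

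First I would record that under the hypothesis $p \nmid m$, Theorem \ref{theo degree 0 of negative TC and TP after root adjunction} already gives equivalences
\[
\thh(A)^{hS^1} \xrightarrow{\simeq} \thh(\artmx)_0^{hS^1}, \qquad (\thh(A)^{tC_p})^{hS^1} \xrightarrow{\simeq} (\thh(\artmx)^{tC_p})_0^{hS^1},
\]
and that these equivalences intertwine the respective canonical maps, because $\on{can}$ is induced from the $S^1$-equivariant structure of $\thh$ and therefore respects the weight grading. Likewise, the Frobenius $\varphi_p$ on $\thh(\artmx)$ sends weight $i$ to weight $ip$, so in particular the weight-zero part is sent to the weight-zero part; and under the identification of Theorem \ref{theo degree 0 of negative TC and TP after root adjunction}, the restriction $(\varphi_p)_0$ on $\thh(\artmx)_0$ is intertwined with $\varphi_p$ on $\thh(A)$.

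Taking fibers of $\varphi_p^{hS^1} - \on{can}$ on both sides and using the decomposition
\[
\varphi_p^{hS^1} - \on{can} = \big((\varphi_p)_0 - \on{can}_0\big) \vee \big((\varphi_p)_{>0} - \on{can}_{>0}\big)
\]
from Construction \ref{cons weight zero splitting of TC}, the induced map $\tc(A) \to \tc(\artmx)$ lands in the summand $\tc(\artmx)_0$ defined as the fiber of $(\varphi_p)_0 - \on{can}_0$. The two-out-of-three property applied to the map of fiber sequences then shows that $\tc(A) \to \tc(\artmx)_0$ is an equivalence, and by construction, the composite $\tc(A) \simeq \tc(\artmx)_0 \hookrightarrow \tc(\artmx)_0 \vee \tc(\artmx)_1 \simeq \tc(\artmx)$ agrees with the canonical map induced by $A \to \artmx$.

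The main technical point to verify carefully is that the comparison between the two fiber sequences is genuinely compatible with the weight decomposition of Construction \ref{cons weight zero splitting of TC}, i.e.\ that the equivalences from Theorem \ref{theo degree 0 of negative TC and TP after root adjunction} are compatible with both $\varphi_p$ and $\on{can}$ in a coherent way. For $\on{can}$ this is automatic from naturality of Tate constructions in $S^1$-equivariant spectra. For $\varphi_p$, the essential observation is that the cyclotomic Frobenius is natural in $E_1$-ring maps and that for an $m$-graded input, it is compatible with the weight refinement described in \cite[Corollary A.9]{antieau2020beilinson}; since $A$ is concentrated in weight zero, everything lands in the weight-zero slot, which is exactly the summand where the weight-zero Frobenius and canonical map live. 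This naturality is the only real input beyond the already-proved weight-zero equivalences, after which the statement follows by passing to fibers.
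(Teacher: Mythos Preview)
Your proof is correct and follows essentially the same approach as the paper: both arguments use that $A\to\artmx$ is a map of $m$-graded $E_1$-rings concentrated in weight zero, so the induced map on $\tc$ factors through the weight-zero summand $\tc(\artmx)_0$, and then the weight-zero THH equivalence of Theorem~\ref{theo weight zero of THH after root adjunction} (together with its compatibility with $\varphi_p$ and $\can$) forces $\tc(A)\to\tc(\artmx)_0$ to be an equivalence. The paper's version is slightly more economical in that it invokes the equivalence $\thh(A)\xrightarrow{\simeq}\thh(\artmx)_0$ directly as a map of cyclotomic spectra, rather than passing through the consequences recorded in Theorem~\ref{theo degree 0 of negative TC and TP after root adjunction}, but the content is the same.
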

\begin{proof}
Since $A$ is concentrated in weight zero, the map $\tc(A) \to \tc(\artmx)$ factors through the map
\begin{equation}\label{eq tc map to weight zero}
    \tc(A) \to \tc(\artmx)_0
\end{equation}
induced by the canonical map $\thh(A) \to \thh(\artmx)_0$. 
 The map $\thh(A) \to \thh(\artmx)_0$ of cyclotomic spectra is an equivalence due to Theorem \ref{theo weight zero of THH after root adjunction}. Considering the construction of $\tc(\artmx)_0$, one observes that this  equivalence induces an equivalence between the fiber sequences defining $\tc(A)$ and $\tc(\artmx)_0$. In other words, \eqref{eq tc map to weight zero} is an equivalence as desired.
\end{proof}

Finally, we obtain the desired splitting for $\kth(\artmx)$.
\begin{theo}[Theorem \ref{theo INTRO root adj k theory inclusion}]\label{theo root adj k theory inclusion}
Assume Hypothesis \ref{hypo root adjunction} with $p\nmid m$ and $k>0$. Furthermore, assume that $A$ is $p$-local and connective. In this situation, the following map 
\[\kth(A) \to \kth(\artmx)\]
is the inclusion of a wedge summand.
\end{theo}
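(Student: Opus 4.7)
The plan is to reduce to the topological cyclic homology splitting of Theorem \ref{theo weight zero splitting for tc} via the Dundas--Goodwillie--McCarthy theorem. First I would verify the input to DGM, namely that the map $A\to\artmx$ induces an isomorphism on $\pi_0$. This is where the hypothesis $k>0$ is essential: the weight-piece formula \eqref{eq weight pieces of root adj} gives a splitting of the underlying spectrum $\artmx\simeq\bigvee_{0\leq i<m}\Sigma^{ik}A$, and since $A$ is connective and $k>0$ the positive-weight summands are $k$-connective. Only the weight-zero summand contributes to $\pi_0$, so $\pi_0A\to\pi_0\artmx$ is an isomorphism and DGM applies.

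DGM then produces a homotopy Cartesian square
\[\begin{tikzcd}
\kth(A)\arrow[r]\arrow[d] & \kth(\artmx)\arrow[d]\\
\tc(A)\arrow[r] & \tc(\artmx)
\end{tikzcd}\]
of spectra, with vertical maps the cyclotomic trace. Because the $\infty$-category of spectra is stable, this square is also cocartesian, and hence
\[\kth(\artmx)\simeq \kth(A)\sqcup_{\tc(A)}\tc(\artmx).\]
By Theorem \ref{theo weight zero splitting for tc}, the bottom row is equivalent to the canonical inclusion $\tc(A)\to\tc(A)\vee\tc(\artmx)_1$ of the first wedge summand. A purely formal manipulation of pushouts in a stable $\infty$-category, using that a pushout of the shape $X\sqcup_{Y}(Y\vee Z)$ along the canonical summand inclusion $Y\to Y\vee Z$ simplifies to $X\vee Z$, then yields $\kth(\artmx)\simeq \kth(A)\vee\tc(\artmx)_1$. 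Tracing through the identification, the map $\kth(A)\to\kth(\artmx)$ is visibly the inclusion of the first wedge summand.

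I do not anticipate any serious technical obstacle here. The only real subtlety is the $\pi_0$-isomorphism: in the degree-zero case $k=0$ this hypothesis of DGM fails in general, which is presumably why the authors isolate the non-connective (weight-zero) situation in Corollary \ref{corr nonconnective alg kthry inclusion for root adj}. Once the $\pi_0$-isomorphism is in place, the conclusion for $\kth$ follows formally from DGM together with the already established $\tc$-level splitting.
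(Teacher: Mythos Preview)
Your proof is correct and follows the same overall strategy as the paper---reduce to the $\tc$-level splitting via the Dundas--Goodwillie--McCarthy theorem---but your execution is more direct. The paper applies DGM to the map $\artmx \to H\pi_0 A$, which first requires constructing this as a map of $m$-graded $E_1$-rings (via a weight-zero Postnikov truncation $\sphsk\to\sph$ followed by ordinary truncation), and then argues that the resulting pullback square decomposes because the induced map $\tc(\artmx)\to\tc(H\pi_0 A)$ annihilates the summand $\tc(\artmx)_1$. You instead apply DGM directly to the $\pi_0$-isomorphism $A\to\artmx$ and invoke Theorem~\ref{theo weight zero splitting for tc} as a black box; the pushout identity $\kth(A)\sqcup_{\tc(A)}\bigl(\tc(A)\vee\tc(\artmx)_1\bigr)\simeq\kth(A)\vee\tc(\artmx)_1$ is valid and immediate in any stable $\infty$-category. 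Your route sidesteps the auxiliary graded Postnikov construction entirely; both arguments land on the same identification $\kth(\artmx)\simeq\kth(A)\vee\tc(\artmx)_1$.
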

\begin{proof}
Since $\lv a \rv = mk$ and since $k>0$, we have 
\begin{equation}\label{eq pizero of rootadj}
\pi_0 \artmx = \pi_0 A.
\end{equation}
We start by constructing a map of $m$-graded $E_1$-algebras
\begin{equation}\label{eq graded postnikov section for root adj}
\artmx \to H\pi_0A
\end{equation}
that induces an isomorphism on $\pi_0$ where $H\pi_0A$ is concentrated in weight $0$. Weight $0$ Postnikov truncation \cite[Lemma B.0.6]{hahn2020redshift} provides a  map of graded $E_2$-rings $\sphsk \to \sph$ that we consider as a map of $m$-graded $E_2$-rings by left Kan extending through $\z\to\z/m$. 

 This provides a map of $m$-graded $E_1$-rings
\[\artmx \simeq A \wdg_{\sphsmk} \sphsk \to A \wdg_{\sphsmk} \sph\]
(see \eqref{eq unit of the extension of scalars functor}) where the right hand side is concentrated in weight $0$. Postcomposing with the ordinary Postnikov truncation, we obtain \eqref{eq graded postnikov section for root adj}. 

Due to the Dundas-Goodwillie-McCarthy theorem \cite{dundasgwlliemccrthy2013localstrctkthry}, there is a pullback square
\begin{equation*}
    \begin{tikzcd}
    \kth(\artmx) \ar[r] \ar[d] &\tc(\artmx) \simeq \tc(A) \vee \tc(\artmx)_1 \ar[d]\\
    \kth(H\pi_0A) \ar[r] &\tc(H\pi_0A)
    \end{tikzcd}
\end{equation*}
provided by the map \eqref{eq graded postnikov section for root adj}.
The equivalence on the upper right corner follows by Construction \ref{cons weight zero splitting of TC} and  Theorem \ref{theo weight zero splitting for tc}.

The map $\artmx \to H\pi_0A$ induces a map of $m$-graded spectra 
\[f \co \thh(\artmx) \to \thh(H\pi_0A).\]
Since $H\pi_0A$ is concentrated in weight zero, $\thh(H\pi_0A)$ is also concentrated in weight zero. Therefore, the map $f$ is trivial  on $\thh(\artmx)_{>0}$. This shows that the right vertical map above induces the trivial map on $\tc(\artmx)_1$. Using this, we obtain that the pullback square above splits as a coproduct of the  pullback squares
\begin{equation*}
\begin{tikzcd}
    \kth(A) \ar[r]\ar[d] & \tc(A)\ar[d]\\
    \kth(H\pi_0A) \ar[r] &\tc(H\pi_0A)
    \end{tikzcd}
\end{equation*}
and 
\begin{equation*}
\begin{tikzcd}
    \tc(\artmx)_1 \ar[r,"\simeq"]\ar[d] & \tc(\artmx)_1\ar[d]\\
    * \ar[r] &*.
    \end{tikzcd}
\end{equation*}
This shows that 
\[\kth(\artmx) \simeq \kth(A) \vee \tc(\artmx)_1\]
as desired.

\end{proof}

Using the Purity theorem for algebraic $K$-theory, we obtain the following for non-connective $A$.

\begin{corr}\label{corr nonconnective alg kthry inclusion for root adj}
Assume Hypothesis \ref{hypo root adjunction} with $p\nmid m$ and $k>0$. If $A$ is $p$-local, then the  map 
\[L_{T(i)}\kth(A) \to L_{T(i)}\kth(\artmx)\]
is the inclusion of a wedge summand for every $i\geq 2$. In particular, if $A$ is of height larger than $0$ and $A$ satisfies the redshift conjecture, then $\artmx$ also satisfies the redshift conjecture.

\end{corr}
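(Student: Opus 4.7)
The strategy is to reduce to Theorem \ref{theo root adj k theory inclusion} by using the Purity theorem of Land--Mathew--Meier--Tamme \cite{land2020purity} to pass from $A$ to its connective cover.

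First, I would pass to the connective cover $A^c := \tau_{\geq 0} A$. Since $\sphsmk$ is connective (as $mk > 0$), the $\sphsmk$-algebra structure map $\sphsmk \to A$ factors uniquely through $A^c$, equipping $A^c$ with a compatible $\sphsmk$-algebra structure. Since $mk > 0$, the class $a \in \pi_{mk}A$ lies in $\pi_{mk}A^c$, so $A^c$ satisfies Hypothesis \ref{hypo root adjunction}. Functoriality of Construction \ref{cons adjroots} in the $\sphsmk$-algebra input produces a natural comparison map $A^c(\sqrt[m]{a}) \to \artmx$ of $E_1$ $\sphsk$-algebras. Since $A^c$ is $p$-local and connective, Theorem \ref{theo root adj k theory inclusion} gives that $\kth(A^c) \hookrightarrow \kth(A^c(\sqrt[m]{a}))$ is the inclusion of a wedge summand, which persists after $L_{T(i)}$-localization.

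Next, I would compare the non-connective setting with the connective one by examining fibers. Using the spectrum-level wedge decomposition \eqref{eq weight pieces of root adj} applied to both sides, the fiber of the comparison is
\[
\fib\bigl(A^c(\sqrt[m]{a}) \to \artmx\bigr) \simeq \bigvee_{0 \leq i < m} \Sigma^{ik}\,\fib(A^c \to A).
\]
Now $\fib(A^c \to A) \simeq \Sigma^{-1}\tau_{<0}A$ is bounded above (its homotopy vanishes in degrees $\geq -1$), so the displayed fiber is bounded above as well. The Purity theorem of \cite{land2020purity} asserts that for $i \geq 2$, $L_{T(i)}\kth$ inverts maps of $E_1$-rings whose fibers are bounded above; applied to $A^c \to A$ and to $A^c(\sqrt[m]{a}) \to \artmx$ it yields equivalences $L_{T(i)}\kth(A^c) \xrightarrow{\simeq} L_{T(i)}\kth(A)$ and $L_{T(i)}\kth(A^c(\sqrt[m]{a})) \xrightarrow{\simeq} L_{T(i)}\kth(\artmx)$.

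Combining these equivalences with the connective splitting produces the desired wedge summand inclusion $L_{T(i)}\kth(A) \hookrightarrow L_{T(i)}\kth(\artmx)$ for $i \geq 2$. The redshift assertion is then immediate: if $A$ has height $n \geq 1$ and $L_{T(n+1)}\kth(A) \not\simeq 0$, then $L_{T(n+1)}\kth(\artmx) \not\simeq 0$ since $n+1 \geq 2$, and $\artmx$ has height at most $n$ because $\artmx \simeq \bigvee_{0 \leq i < m} \Sigma^{ik}A$ as a spectrum forces $L_{T(j)}\artmx \simeq 0$ for $j > n$. The main obstacle is invoking Purity in precisely the form needed: the threshold $i \geq 2$ in the conclusion is a shadow of the Purity theorem, which can fail at $T(1)$ for coconnective modifications in this generality.
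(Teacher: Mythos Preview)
Your proposal is correct and follows essentially the same route as the paper: pass to the connective cover of $A$ as an $\sphsmk$-algebra, apply Theorem~\ref{theo root adj k theory inclusion} there, and then use that the comparison maps $A^c \to A$ and $A^c(\sqrt[m]{a}) \to \artmx$ have bounded-above fibers (via \eqref{eq weight pieces of root adj}) so that Purity \cite{land2020purity} gives $L_{T(i)}\kth$-equivalences for $i\geq 2$. The only minor imprecision is your phrasing of Purity: what \cite{land2020purity} literally gives is that a $T(i-1)\vee T(i)$-equivalence of $E_1$-rings induces a $T(i)$-equivalence on $\kth$, and you are implicitly using that bounded-above spectra are $T(j)$-acyclic for all $j\geq 1$ to get this hypothesis.
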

\begin{proof}
    Let $cA$ denote the connective cover of $A$ in $\sphsmk$-algebras. We consider the following commuting diagram of $m$-graded $E_1$-rings.
\begin{equation*}
    \begin{tikzcd}
        cA  \ar[r]\ar[d] &A \ar[d]\\
        (cA)(\sqrt[m]{a}) \ar[r]& \artmx
    \end{tikzcd}
\end{equation*}
Every spectrum with bounded above homotopy is $T(i)$-locally trivial for every $i\geq1$. Taking fibers, one obtains that the horizontal arrows above are $T(i)$-equivalence for every $i\geq1$, see \eqref{eq weight pieces of root adj}. 

It follows by \cite[Purity Theorem]{land2020purity} that the horizontal maps above induce $T(i)$-equivalences in algebraic $K$-theory for every $i\geq 2$. The result follows by applying Theorem \ref{theo root adj k theory inclusion} to the left vertical map. 
\end{proof}

\section{A variant of  logarithmic THH}\label{sect log thh}

Here, we introduce our definition of logarithmic THH and identify $\thh(\artmx)$ using $\thh(A)$ and logarithmic THH of $A$ whenever $A$ is $p$-local and $p \nmid m$. Through our definition, logarithmic THH admits a canonical structure of a cyclotomic spectrum; in upcoming work, Devalapurkar and the third author develop a very general notion of logarithmic structures for $E_2$-algebras and a corresponding theory of log THH  which subsumes the definition we use here. This will in particular recover the variant due to Rognes, which is defined by way of the replete bar construction, cf. \cite{rognes2009topologicallogarithmic,rognes2015localization}

Our definition of log THH starts with a definition of the log THH of the free algebra $\sph[\sigma_k]$ where $k\geq0$ is even as before. We consider $\sigma_k$ to be in weight 1.

For a graded $E_n$-ring spectrum $E$, we denote the \emph{weight connective cover} of $E$ by $E_{\geq 0}$. Indeed, the weight connective cover is obtained by  restricting and then left Kan extending through the inclusion $\mathbb{N} \to \Z$. The counit of this adjunction provides a map $E_{\geq 0}\to E$ of graded $E_n$-algebras.  

\begin{cons}\label{cons spherical laurant polynomials}
  Analogous to Construction \ref{cons of graded e2 polynomials}, let $\sph[\sigma_k^{\pm}]:= \on{sh}^k(\sph[t^\pm])$. The graded $E_\infty$-map $\sph[t] \to \sph[t^\pm]$  provides a graded $E_2$-map $\sph[\sigma_k] \to \sph[\sigma_k^\pm]$. Furthermore, by the definition of the shearing functor, $\sph[\sigma_k^\pm]$ is indeed given by $\phi^k$ of Variant \ref{var higher shearing}; in particular, $\sph[\sigma_{mk}^\pm]$ is the restriction of $\sph[\sigma_k^\pm]$ along $\z \xrightarrow{\cdot m} \z$. Applying the adjunction $L_m \dashv R_m$ induced by $\cdot m$ to the map $\sphsmk \to \sphsk$, one observes that $\sphsmk$ is also the restriction of $\sphsk$ along $\cdot m$. Therefore, the counit of  $L_m \dashv R_m$ provides a commutative diagram of graded $E_2$-rings:
\begin{equation*}
    \begin{tikzcd}
    \sphsmk \ar[r]\ar[d] & \ar[d] \sph[\sigma_{mk}^\pm]\\
    \sphsk \ar[r] & \sph[\sigma_k^\pm].
    \end{tikzcd}
\end{equation*}

\end{cons}

\begin{rema}\label{rema map from spherical thh to spherical log thh}
One may also take weight connective covers in the $\infty$-category of graded $S^1$-equivariant spectra by using the left Kan extension/restriction adjunction induced by $\mathbb{N}^{ds} \to \Z$. This provides a map 
\[\thh(\sph[\sigma_k]) \to \thh(\sph[\sigma_k^\pm])_{\geq 0}\]
of graded $E_1$-algebras in $S^1$-equivariant spectra factoring the map $   \thh(\sph[\sigma_k]) \to \thh(\sph[\sigma_k^\pm])$. 
\end{rema}

The following is analogous to the description of the replete bar construction of commutative $\mathcal{J}$-space monoids generated by a single element; c.f.\ \cite[Proposition 3.21]{rognes2009topologicallogarithmic},  \cite[Section 8.5]{sagave2019virtualandgddthom}  and \cite[Sections 6 and 7]{rognes2015localization}.

\begin{defi}\label{defi logarithmic thh of free spherical algebras}
Let $k \geq 0$ be even. The logarithmic  THH of $\sph[\sigma_k]$ with respect to $\sigma_k \in \pi_k \sph[\sigma_k]$ is the weight connective cover of the topological Hochschild homology of $\sph[{\sigma_k}^{\pm}]$.  In other words, it is the $S^1$-equivariant $E_1$-algebra:
\[\thh(\sph[\sigma_k] \mid \sigma_k) := \thh(\sph[\sigma_k^\pm])_{\geq 0}.\]
Similarly, the $p$-local counterpart is defined as follows. 
\[\thh(\sphpsk \mid \sigma_k) := \thh(\sphpl[\sigma_k^\pm])_{\geq 0}\]
\end{defi}

The following example provides a justification for this definition of logarithmic THH by showing that its $\hz$-homology provides what should be the logarithmic Hochschild homology of the free algebra $\z[\sigma_k]$,  c.f.\ \cite[Example 10.3]{krause2019b}. 

\begin{exam}\label{exam log thh of the free polynomial  algebra}
Considering $\hz$ as a graded $E_\infty$-algebra concentrated in weight $0$, we deduce that 
\[\hz \wdg (\thh(\sph[\sigma_k^\pm])_{\geq 0})\simeq (\hz \wdg \thh(\sph[\sigma_k^\pm]))_{\geq 0}.\]
Therefore, $\hz_*\thh(\sph[\sigma_k]\mid \sigma_k)$ is given by the weight connective cover of 
\begin{equation}\label{eq dg laurant poly thh}
    \thh^{\hz}_*(\hz[\sigma_k^\pm]) \cong \z[\sigma_k^\pm] \otimes \Lambda(d \sigma_k)
\end{equation}
where $d \sigma_k$ is of weight $1$ and  degree $k+1$ and $\sigma_k$ is of weight $1$ and degree $k$. The isomorphism above follows by the usual B\"okstedt spectral sequence considerations applied together with the HKR theorem. Taking the weight connective cover of \eqref{eq dg laurant poly thh}, we obtain:
\[\hz_*\thh(\sph[\sigma_k] \mid \sigma_k) \cong \z[\sigma_k] \otimes \Lambda( \textup{dlog} \sigma_k)\]
where $\text{dlog} \sigma_k$ is of weight $0$ and homotopical degree $1$ and it corresponds to $(d \sigma_k) /\sigma_k$. Furthermore, the map 
\[\hz_* \thh(\sphsk) \to \hz_*\thh(\sph[\sigma_k] \mid \sigma_k)\]
carries $d\sigma_k$ to $d\sigma_k = \sigma_k\text{dlog}\sigma_k$.
\end{exam}

Recall from Construction \ref{cons thh of an algebra is a module} that when $A$ is a $\sphsk$-algebra, $\thh(A)$ admits the structure of a right $\thh(\sphsk)$-module. We use this structure in the following definition. Recall that Proposition \ref{prop hypothesis} provides various cases of interest where the assumptions on $A$ in the following definition are satisfied. 
\begin{defi}\label{defi log thh of general X}
Let $A$ be an $E_1$ $\sph[\sigma_k]$-algebra and assume that the unit map $\sph[\sigma_k] \to A$ carries $\sigma_k \in \pi_k \sph[\sigma_k]$ to $a \in \pi_k A$ with even $k \geq 0$. We define the logarithmic THH of $A$ relative to $a$ as the following $S^1$-equivariant spectrum. 
\[\thh(A \mid a) := \thh(A) \wdg_{\thh(\sphsk)} \thh(\sphsk \mid \sigma_k)\]
If $A$ is assumed to be $p$-local, we use the following equivalent definition
\[\thh(A \mid a) := \thh(A) \wdg_{\thh(\sphpsk)} \thh(\sphpsk \mid \sigma_k).\]
\end{defi}

The definition of logarithmic THH we provide above is analogous to the definitions used in \cite{rognes2009topologicallogarithmic,sagave2019virtualandgddthom,rognes2015localization}.
\begin{rema}
We remark that $\thh(\sphsk \mid \sigma_k)$ should be a cyclotomic spectrum as the Frobenius maps of THH multiply the weight by $p$ and this should provide $\thh(A \mid a)$ above with the structure of a cyclotomic spectrum. However, since we don't explicitly need this for our application, displaying this will take us too far afield, and so, we leave the details to the future work of Devalapurkar and the third author. 
\end{rema}

 Since the definition of logarithmic THH  is given by the extension of scalars functor: 
\[- \wdg_{\thh(\sphsk)} \thh(\sphsk \mid \sigma_k) \co \rmod_{\thh(\sphsk)} \to \rmod_{\thh(\sphsk \mid \sigma_k)},\]
corresponding to the  $E_1$-algebra map $\thh(\sph[\sigma_k]) \to \thh(\sph[\sigma_k]\mid \sigma_k)$, we deduce that $\thh(A \mid a)$ is equipped with the structure of a  right $\thh(\sph[\sigma_k] \mid \sigma_k)$-module. Furthermore, the unit of the adjunction given by the extension of scalars functor above and the corresponding forgetful functor provides a map 
\begin{equation}\label{eq map from thh to logthh}
    \thh(A)\to \thh(A \mid a)
\end{equation}
of right $\thh(\sphsk)$-modules, see \eqref{eq unit of the extension of scalars functor}.

\begin{rema}
Using $MU[\sigma_k]$ mentioned in Remark \ref{rema bpnadj root is ethree}, it is possible to equip logarithmic THH with the structure of an $E_n$-algebra for $n>0$ in favorable cases. For instance,  for the $E_3$ $MU[\sigma_{2(p^n-1)}]$-algebra form of $\bpn$ constructed in \cite{hahn2020redshift},  $\thh(\bpn \mid v_n)$ admits the structure of an $E_1$-ring. Indeed, using the map of $E_2$-rings $\thh(MU[\sigma_{2(p^n-1)}]) \to \thh(\bpn)$, we obtain an $E_1$-ring:
\[\thh(\bpn) \wdg_{\thh(MU[\sigma_{2(p^n-1)}])} \thh(MU[\sigma_{2(p^n-1)}^\pm])_{\geq 0},\]
  equivalent to $\thh(\bpn \mid v_n)$. This equivalence follows by the following chain of equivalences 
\begin{equation*}
    \begin{split}
   \thh&(\bpn) \wdg_{\thh(MU[\sigma_{2(p^n-1)}])} \thh(MU[\sigma_{2(p^n-1)}^\pm])_{\geq 0}  \\
   &\simeq \thh(\bpn) \wdg_{\thh(MU) \wdg \thh(\sph[\sigma_{2(p^n-1)}])} \thh(MU) \wdg \thh(\sph[\sigma_{2(p^n-1)}^\pm])_{\geq 0} \\
   &\simeq \thh(\bpn)
   \wdg_{\thh(\sph[\sigma_{2(p^n-1)}])} \thh(\sph[\sigma_{2(p^n-1)}])
    \end{split}
\end{equation*}
obtained from the equivalence of $E_2$ $MU$-algebras $MU[\sigma_{2(p^n-1)}] \simeq MU \wdg \sph[\sigma_{2(p^n-1)}]$ mentioned in Remark \ref{rema bpnadj root is ethree}.

Furthermore, Hahn and  Yuan   \cite[1.11 and 1.12]{hahn2020exotic} show that 
there is an $E_\infty$-map $MU[\sigma_2]\to ku_p$ for $p=2$ and claim that their methods provide such a map for odd primes too. In this situation, $\thh(ku_p \mid u_2)$ is equipped with the structure of an $E_\infty$-ring (by arguing as above) where $u_2$ denotes the Bott element. Note that the logarithmic THH of $ku_p$ relative to $u_2$ is also constructed as an $E_\infty$-ring in \cite{rognes2018logthhofku}. 

\end{rema}

\begin{rema}
In work in progress, S. Devalapurkar and the second author show in a general context, that for every $E_2$-ring with even homotopy, logarithmic THH, as in our definition, may be equipped with a canonical $E_1$-algebra structure in cyclotomic spectra. 
\end{rema}

To study the log THH of $E_1$-rings obtained via root adjunctions, we use the following constructions.

\begin{cons} \label{cons map to the weight connective covers}
Using Construction \ref{cons spherical laurant polynomials} and the weight connective cover adjunction mentioned in Remark \ref{rema map from spherical thh to spherical log thh}, we obtain the following commuting diagram of graded $E_1$-algebras. 
\begin{equation}\label{diag thh to log thh is natural}
    \begin{tikzcd}
    \sphsmk \ar[d] \ar[r] &\thh(\sph[\sigma_{mk}])\ar[d] \ar[r]& \thh(\sph[\sigma_{mk}]\mid \sigma_{mk})\ar[d]\\
    \sphsk \ar[r] &\thh(\sph[\sigma_k]) \ar[r]& \thh(\sph[\sigma_k]\mid \sigma_k)
    \end{tikzcd}
\end{equation}
\end{cons}

\begin{cons}\label{cons graded structure on log thh}
Assume Hypothesis \ref{hypo root adjunction}. By Proposition \ref{prop thh of relative smash product}, there is an equivalence:
\[\thh(\artmx) \simeq \thh(A) \wdg_{\thh(\sphsmk)} \thh(\sphsk).\]
This equips $\thh(\artmx)$ with the structure of a right $\thh(\sphsk)$-module in $m$-graded spectra. Considering the map
\[\thh(\sphsk) \to \thh(\sphsk \mid \sphsk)\]
as a map of $m$-graded $E_1$-ring spectra,  
Definition \ref{defi log thh of general X} may be employed at the level of $m$-graded spectra. This  shows that $\thh(\artmx \mid \sqrt[m]{a})$ admits a canonical structure of a right $\thh(\sphsk \mid \sigma_{k})$-module in $m$-graded spectra. Furthermore, the map $\thh(\artmx) \to \thh(\artmx \mid \sqrt[m]{a})$ is a map of $m$-graded $\thh(\sphsk)$-modules.
\end{cons}

\subsection{Logarithmic THH-\'etale root adjunctions}

Here, our goal is to show that when $A$ is $p$-local and $p \nmid m$, root adjunction is logarithmic THH-\'etale. In other words, we show that there is an equivalence of $m$-graded spectra:
\[  \thh(A \mid a)\wdg_{\sphpsmk} \sphpsk\simeq  \thh(\artmx \mid \sqrt[m]{a}).\]
 \begin{rema}
A notion of logarithmic THH-\'etalenes is already defined in \cite{rognes2018logthhofku}. In the language of Rognes, Sagave and Schlichtkrull \cite{rognes2018logthhofku}, logarithmic THH-\'etaleness of $A\to \artmx$ would be  expressed by an equivalence:
\begin{equation}\label{eq RSS log thh etaleness}
\thh(\artmx \mid \sqrt[m]{a}) \simeq \artmx \wdg_A \thh(A \mid a).
\end{equation}
Since we only assume $A$ to be $E_1$, $\thh(A\mid a)$ may not admit an $A$-module structure and therefore, the right hand side above may not be defined in our generality. On the other hand, if one starts with an $E_3$-algebra $A$ with even homotopy, the logarithmic THH of $A$ may be given an $A$-module structure and we obtain that $A\to \artmx$ is logarithmic THH-\'etale in the sense of \eqref{eq RSS log thh etaleness} whenever $A$ is $p$-local and $p \nmid m$. 
\end{rema}

\begin{prop}\label{prop log thh of free are connective }
For $k \geq 0$, the spectra $\thh(\sphsk \mid \sigma_k)$ and $\thh(\sphpsk \mid \sigma_k)$ are connective in homotopy. 
\end{prop}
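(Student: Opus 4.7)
The plan is to establish connectivity weight by weight, using the cyclic bar construction description of $\thh$ in graded spectra. Under the conventions of Section~\ref{sect log thh}, $\sigma_k$ sits in weight $1$ and homotopical degree $k$, so the underlying graded spectrum of $\sph[\sigma_k^\pm]$ is given in weight $j \in \z$ by $\sph[\sigma_k^\pm]_j \simeq \sph^{jk}$; the analogous statement holds $p$-locally with $\sphpl$ in place of $\sph$.

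By the formula for the graded cyclic bar construction recalled before \eqref{eq graded thh to non graded thh} (see \cite[Appendix~A]{antieau2020beilinson}), the weight-$w$ piece of $\thh(\sph[\sigma_k^\pm])$ is the geometric realization of the simplicial spectrum whose $s$-simplices are
$$
\bigvee_{\substack{(k_0,\dots,k_s) \in \z^{s+1} \\ k_0+\cdots+k_s = w}} \sph[\sigma_k^\pm]_{k_0} \wdg \cdots \wdg \sph[\sigma_k^\pm]_{k_s} \ \simeq \ \bigvee \sph^{wk}.
$$
Every summand is a copy of $\sph^{wk}$, since $k_0 k + \cdots + k_s k = wk$. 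Hence each simplicial level is $wk$-connective, and because geometric realization of a levelwise $wk$-connective simplicial spectrum is itself $wk$-connective, we conclude that $\thh(\sph[\sigma_k^\pm])_w$ is $wk$-connective.

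The weight-connective cover $\thh(\sph[\sigma_k] \mid \sigma_k) = \thh(\sph[\sigma_k^\pm])_{\geq 0}$ is then the wedge of the pieces $\thh(\sph[\sigma_k^\pm])_w$ for $w \geq 0$. Since $k \geq 0$ is even, $wk \geq 0$ for all such $w$, so each summand is connective and hence so is the whole wedge. The argument for $\thh(\sphpsk \mid \sigma_k)$ is identical with $\sph$ replaced by $\sphpl$ throughout, and gives a $p$-local connective spectrum.

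I do not foresee a significant obstacle; the only point requiring care is being consistent about the weight sign conventions when identifying the graded pieces $\sph[\sigma_k^\pm]_j$ coming from the shearing construction of Construction~\ref{cons spherical laurant polynomials}. One can sanity-check the answer against Example~\ref{exam log thh of the free polynomial  algebra}, where $H\z_*\thh(\sph[\sigma_k] \mid \sigma_k) \cong \z[\sigma_k]\otimes \Lambda(\textup{dlog}\,\sigma_k)$ is manifestly concentrated in non-negative homotopical degrees.
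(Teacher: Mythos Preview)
Your proof is correct and follows essentially the same approach as the paper's: both argue that the weight-$w$ part of the cyclic bar construction of $\sph[\sigma_k^\pm]$ is connective in each simplicial degree whenever $w\geq 0$, hence so is its realization. You have simply made explicit the identification $\sph[\sigma_k^\pm]_j \simeq \sph^{jk}$ and the resulting $wk$-connectivity at each level, which the paper leaves implicit in its one-line proof.
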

\begin{proof}
This follows by the fact that the weight connective part of the cyclic bar construction on $\sph[\sigma_k^\pm]$ ($\sphpl[\sigma_k^\pm]$) is connective in homotopy in each simplicial degree.  
\end{proof}
 We start with  proving a logarithmic THH \'etaleness result  for the $p$-localized  free $E_1$-algebra $\sphpsmk$.
\begin{prop}\label{prop free algebra log thh etale}
Let $k\geq 0$ be even  and let $m>0$ with  $p \nmid m$. In this situation,  there is an equivalence  of left  $\thh(\sphpl[\sigma_{mk}] \mid \sigma_{mk})$-modules in $m$-graded spectra:
\[ \thh(\sphpsmk \mid \sigma_{mk})  \wdg_{\sphpsmk} \sphpsk \simeq  \thh(\sphpsk \mid \sigma_k) \]
where the left $\thh(\sphpl[\sigma_{mk}] \mid \sigma_{mk})$-module structure on the right hand is provided by Construction \ref{cons map to the weight connective covers}.
\end{prop}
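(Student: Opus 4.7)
The plan is to construct the natural comparison map, reduce the claim that it is an equivalence to an $\hzpl$-homology statement via Lemma~\ref{lemm map of bounded below spectra}, and then conclude by the B\"okstedt/HKR computation of Example~\ref{exam log thh of the free polynomial  algebra}. The hypothesis $p\nmid m$ will enter precisely as the invertibility of $m$ in $\zpl$.

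First, I would construct the comparison map. The commuting diagram~\eqref{diag thh to log thh is natural} of Construction~\ref{cons map to the weight connective covers} provides an $\sphpsmk$-linear map $f\co\thh(\sphpsmk\mid\sigma_{mk})\to\thh(\sphpsk\mid\sigma_k)$ compatible with the multiplicative structures; base-changing $f$ along $\sphpsmk\to\sphpsk$ and multiplying using the right $\sphpsk$-action on the target yields the desired map
\[
\phi\co\thh(\sphpsmk\mid\sigma_{mk})\wedge_{\sphpsmk}\sphpsk\longrightarrow\thh(\sphpsk\mid\sigma_k)
\]
of left $\thh(\sphpsmk\mid\sigma_{mk})$-modules in $m$-graded spectra. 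Since the forgetful functor $D$ is simultaneously a product and a coproduct (as $m$ is finite) and so reflects equivalences, it suffices to prove that the underlying map of spectra is an equivalence.

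Both underlying spectra are connective: the target by Proposition~\ref{prop log thh of free are connective }, and the source because $\sphpsk$ is a free $\sphpsmk$-module of rank $m$ with basis $1,\sigma_k,\dots,\sigma_k^{m-1}$, so that the source splits as $\bigvee_{i=0}^{m-1}\Sigma^{ik}\thh(\sphpsmk\mid\sigma_{mk})$, a finite wedge of connective spectra. By Lemma~\ref{lemm map of bounded below spectra}, it therefore suffices to verify that $\hzpl\wedge\phi$ is an equivalence.

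For the $\hzpl$-homology computation, Example~\ref{exam log thh of the free polynomial  algebra} gives $\hzpl_*\thh(\sphpsmk\mid\sigma_{mk})\cong\zpl[\sigma_{mk}]\otimes\Lambda(\mathrm{dlog}\,\sigma_{mk})$ and $\hzpl_*\thh(\sphpsk\mid\sigma_k)\cong\zpl[\sigma_k]\otimes\Lambda(\mathrm{dlog}\,\sigma_k)$. Combining the first identification with the free-module decomposition above identifies the $\hzpl$-homology of the source of $\phi$ with $\zpl[\sigma_k]\otimes\Lambda(\mathrm{dlog}\,\sigma_{mk})$; under $\phi_*$, the class $\sigma_{mk}$ is sent to $\sigma_k^m$, while the Leibniz rule forces $\mathrm{dlog}\,\sigma_{mk}\mapsto\mathrm{dlog}(\sigma_k^m)=m\,\mathrm{dlog}\,\sigma_k$. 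Since $p\nmid m$, $m$ is a unit in $\zpl$, and so $\phi_*$ is an isomorphism, completing the proof. The only delicate step is checking the Leibniz-style identity $\mathrm{dlog}\,\sigma_{mk}\mapsto m\,\mathrm{dlog}\,\sigma_k$ on the nose; this can be read off from the induced map of $\hzpl$-linear B\"okstedt spectral sequences that underlies Example~\ref{exam log thh of the free polynomial  algebra}, and is the main obstacle to making the proof completely routine.
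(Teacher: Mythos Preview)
Your proposal is correct and follows essentially the same approach as the paper: construct the comparison map from the data of Construction~\ref{cons map to the weight connective covers}, reduce to an $\hzpl$-homology check via connectivity and Lemma~\ref{lemm map of bounded below spectra}, and then verify the map is an isomorphism using the B\"okstedt/HKR computation, with the key point being that $\mathrm{dlog}\,\sigma_{mk}\mapsto m\,\mathrm{dlog}\,\sigma_k$ and $m$ is a unit. The paper's presentation differs only cosmetically: it builds the map by extending scalars along $\sphpsmk\to\thh(\sphpsmk\mid\sigma_{mk})$ applied to the unit $\sphpsk\to\thh(\sphpsk\mid\sigma_k)$ (rather than base-changing $f$ and multiplying, which yields the same map), and it carries out the homology computation by first passing through $\thh^{\hzpl}(\hzpl[\sigma_{mk}^\pm])$ and $\thh^{\hzpl}(\hzpl[\sigma_k^\pm])$ before taking weight-connective covers, instead of invoking the already-packaged Example~\ref{exam log thh of the free polynomial  algebra}.
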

\begin{proof}
We start by constructing the desired map. First, there is a composite map of $m$-graded $E_1$-ring spectra, 
\begin{equation}\label{eq unit map}
\sphpsk \to \thh(\sphpsk) \to \thh(\sphpsk \mid \sigma_k)
\end{equation}
which is in particular a map of left $\sphpsmk$-modules in $m$-graded spectra by forgetting structure trough the $m$-graded $E_1$-ring map $\sphpsmk \to \sphpsk$. Using the extension of scalars functor induced by the map 
\begin{equation}\label{eq left module structure}
\sphpl[\sigma_{mk}] \to \thh(\sphpl[\sigma_{mk}]\mid \sigma_{mk})
\end{equation}
of $m$-graded $E_1$-algebras, we obtain the desired map:
\begin{equation*}
    f \co \thh(\sphpsmk \mid \sigma_{mk})  \wdg_{\sphpsmk} \sphpsk \to \thh(\sphpsk \mid \sigma_k),
\end{equation*}
of left $\thh(\sphpsmk \mid \sigma_{mk})$-modules in $m$-graded spectra  from \eqref{eq unit map}. Here, we used the fact that the  left  $\sphpl[\sigma_{mk}]$-module structure on $\thh(\sphpsk \mid \sigma_{k})$ used in \eqref{eq unit map} is compatible with the one obtained by forgetting the canonical left $\thh(\sphpsmk \mid \sigma_{mk})$-module structure on $\thh(\sphpsk \mid \sigma_{k})$ through \eqref{eq left module structure}; this follows by the $p$-local version of Diagram \eqref{diag thh to log thh is natural}.

What remains is to show that $f$ is an equivalence. Since $f$ is a map between $p$-local connective spectra (Proposition \ref{prop log thh of free are connective }),  it is sufficient to show that $\hzpl \wdg f$ is an equivalence, see Lemma \ref{lemm map of bounded below spectra}. 

By inspection on the two sided bar construction defining relative smash products, one obtains that 
\begin{align*}
\hzpl \wdg \thh(\sphpsmk \mid \sigma_{mk}) & \wdg_{\sphpsmk} \sphpsk \simeq \\
&(\hzpl \wdg \thh(\sphpsmk \mid \sigma_{mk}) ) \wdg_{\hzpl[\sigma_{mk}]} \hzpl[\sigma_{k}].
\end{align*}
Using the base change formula for THH, we obtain that $\hzpl \wdg f$ is given by the canonical map 
\[\hzpl \wdg f \co   \thh^{\hzpl}(\hzpl[\sigma_{mk}^\pm])_{\geq 0} \wdg_{\hzpl[\sigma_{mk}]} \hzpl[\sigma_{k}]  \to \thh^{\hzpl}(\hzpl[\sigma_k^\pm])_{\geq 0}.\]

To prove that $\hzpl \wdg f$ is an equivalence, we argue as in the proof of Proposition  \ref{prop spherical root degree zero thh}. The  map of B\"okstedt spectral sequences computing the map  
\begin{equation}\label{eq map between laurant polynomials}
     \thh^{\hzpl}_*(\hzpl[\sigma_{mk}^\pm])  \to \thh^{\hzpl}_*(\hzpl[\sigma_k^\pm])
\end{equation}
is given on the second page, due to the HKR theorem, by the ring map 
\begin{equation}\label{eq bokstedt ss for laurant polynomials}
\phi \co \zpl[\sgmmk^\pm]\otimes \Lambda_{\zpl}(d(\sgmmk)) \to \zpl[\sgmk^\pm] \otimes \Lambda_{\zpl}(d(\sgmk))
\end{equation}
satisfying 
\[\phi(\sigma_{mk}) = \sigma_k^m \textup{ \ and \ } \phi(d(\sigma_{mk})) = m \sgmk^{m-1}d(\sgmk)\]
where $m$ is a unit as $p \nmid m$. In particular, 
\begin{equation}\label{eq image of the log derivative}
    \phi(\sigma_{mk}^{-1}d(\sigma_{mk})) = \sigma_k^{-m} m \sigma_k^{m-1} d(\sigma_k) = m \sigma_k^{-1}d(\sigma_k).
\end{equation}

Here, $\sigma_{mk}$ and $\sigma_k$ are in degrees $(0,mk)$ and $(0,k)$ respectively and $d(\sigma_{mk})$ and $d(\sigma_k)$ are in degrees $(1,mk)$ and $(1,k)$ respectively. Furthermore, $\sigma_{mk}$ and $d(\sigma_{mk})$ are of weight $m$ and $\sigma_k$ and $d(\sigma_k)$ are of weight $1$. In particular, both B\"okstedt spectral sequences degenerate on the second page and the map $\phi$ provides the map \eqref{eq map between laurant polynomials}.

Taking connective covers in weight direction and identifying $\sigma_{mk}^{-1} d(\sigma_{mk})$ as $\text{dlog}\sigma_{mk}$ and $\sigma_k^{-1} d(\sigma_k)$ as $\text{dlog}\sigma_k$, we obtain that the map 

 \[\thh^{\hzpl}_*(\hzpl[\sigma_{mk}^\pm])_{\geq 0}  \to \thh^{\hzpl}_*(\hzpl[\sigma_{k}^\pm])_{\geq 0}\]
is given by a  map 
\[\zpl[\sigma_{mk}] \otimes \Lambda_{\zpl}(\textup{dlog} \sigma_{mk}) \to \zpl[\sigma_{k}] \otimes \Lambda_{\zpl}(\textup{dlog}\sigma_{k}) \]
that carries $\sigma_{mk}$ to $\sigma_{k}^m$ and $\text{dlog}\sigma_{mk}$ to $\text{dlog}\sigma_k$ up to a unit due to \eqref{eq image of the log derivative} as $p \nmid m$.  

Upon extending  scalars with respect to the map $\z_{(p)}[\sigma_{mk}] \to \z_{(p)}[\sigma_{k}]$, this map becomes an isomorphism. In other words, $\pis(\hzpl \wdg f)$ is an isomorphism and therefore, $f$ is an equivalence.

\end{proof}

The following provides the logarithmic THH-\'etaleness of root adjunction in ring spectra.

\begin{theo}\label{theo root adj is log thh etale }
Assume Hypothesis \ref{hypo root adjunction} with $p\nmid m$ and that $A$ is $p$-local.  In this situation,  there is an equivalence of $m$-graded spectra 
\[  \thh(\artmx \mid \sqrt[m]{a})\simeq \thh(A \mid a)\wdg_{\sphpsmk} \sphpsk  . \]
In other words, as an $m$-graded spectrum, $\thh(\artmx \mid \sqrt[m]{a})$ is given by 
\[\thh(\artmx \mid \sqrt[m]{a})_i \simeq  \Sigma^{ik} \thh(A\mid a) \]
for every $0\leq  i<m$.
\end{theo}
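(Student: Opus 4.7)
The plan is to chain together the two key structural results established earlier: Proposition \ref{prop thh of relative smash product} (which computes THH of a relative smash product) and Proposition \ref{prop free algebra log thh etale} (the log-THH-étaleness for the free $E_1$-algebras $\sphpsmk \to \sphpsk$). Combining these with associativity of the relative smash product should identify $\thh(\artmx \mid \sqrt[m]{a})$ with $\thh(A \mid a) \wdg_{\sphpsmk} \sphpsk$ directly.

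Concretely, I would first unfold the definitions. Since $A$ is $p$-local, Construction \ref{cons adjroots} gives $\artmx \simeq A \wdg_{\sphpsmk} \sphpsk$ as $m$-graded $E_1$ $\sphpsk$-algebras. Applying Proposition \ref{prop thh of relative smash product} to the $m$-graded $E_2$-ring map $\sphpsmk \to \sphpsk$ and the $m$-graded $E_1$ $\sphpsmk$-algebra $A$ yields an equivalence
\[
\thh(\artmx) \simeq \thh(A) \wdg_{\thh(\sphpsmk)} \thh(\sphpsk)
\]
of $m$-graded right $\thh(\sphpsk)$-modules. Then, applying the $m$-graded version of Definition \ref{defi log thh of general X} via Construction \ref{cons graded structure on log thh}, we have
\[
\thh(\artmx \mid \sqrt[m]{a}) \simeq \thh(\artmx) \wdg_{\thh(\sphpsk)} \thh(\sphpsk \mid \sigma_k)  \simeq \thh(A) \wdg_{\thh(\sphpsmk)} \thh(\sphpsk \mid \sigma_k),
\]
where the second equivalence is associativity of the relative smash product.

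Next I would invoke Proposition \ref{prop free algebra log thh etale} to rewrite the right-hand factor as
\[
\thh(\sphpsk \mid \sigma_k) \simeq \thh(\sphpsmk \mid \sigma_{mk}) \wdg_{\sphpsmk} \sphpsk,
\]
where crucially the left $\thh(\sphpsmk \mid \sigma_{mk})$-module structure used here is the one provided by Construction \ref{cons map to the weight connective covers}; this compatibility is exactly what makes the next associativity step legitimate. Substituting and reassociating gives
\[
\thh(\artmx \mid \sqrt[m]{a}) \simeq \bigl(\thh(A) \wdg_{\thh(\sphpsmk)} \thh(\sphpsmk \mid \sigma_{mk})\bigr) \wdg_{\sphpsmk} \sphpsk \simeq \thh(A \mid a) \wdg_{\sphpsmk} \sphpsk,
\]
which is the required equivalence.

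For the weight decomposition, since $A$ and $\thh(A \mid a)$ are concentrated in weight $0$ (viewed through the functor $F$ of Section \ref{subsec manipulations on grade objects}) and $\sphpsk$ has weight pieces $(\sphpsk)_i \simeq \Sigma^{ik}\sphpsmk$ for $0 \le i < m$, the relative smash product over $\sphpsmk$ gives $\thh(\artmx \mid \sqrt[m]{a})_i \simeq \Sigma^{ik}\thh(A \mid a)$. The main obstacle is checking that the module structures on $\thh(\sphpsk \mid \sigma_k)$ used by the two applications (the left $\sphpsmk$-action implicit in the relative smash product definition of $\thh(A\mid a)$ versus the one coming from $\thh(\sphpsmk \mid \sigma_{mk})$ in Proposition \ref{prop free algebra log thh etale}) are compatible; this is guaranteed by the commutativity of diagram \eqref{diag thh to log thh is natural} and its $p$-local analogue, so the chain of associativity equivalences goes through without additional work.
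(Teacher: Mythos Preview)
Your proof is correct and follows essentially the same approach as the paper: both unfold the definitions, apply Proposition~\ref{prop thh of relative smash product}, and then use Proposition~\ref{prop free algebra log thh etale} together with associativity of relative smash products. The only cosmetic difference is the order in which you apply Proposition~\ref{prop free algebra log thh etale} and the identification of $\thh(A\mid a)$; the paper first inserts $\thh(\sphpsmk\mid\sigma_{mk})\wdg_{\thh(\sphpsmk\mid\sigma_{mk})}-$ and only afterwards invokes Proposition~\ref{prop free algebra log thh etale}, but the chain of bimodule associativity steps is the same either way, and your explicit remark about compatibility via diagram~\eqref{diag thh to log thh is natural} addresses exactly the point needed.
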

\begin{proof}
We have the following chain of equivalences
\begin{equation*}
    \begin{split}
    &\thh(\artmx \mid \sqrt[m]{a}) \simeq \thh(\artmx) \wdg_{\thh(\sphpsk)} \thh(\sphpsk \mid \sigma_k)\\
    &\simeq \big(\thh(A) \wdg_{\thh(\sphpsmk)}\thh(\sphpsk) \big) \wdg_{\thh(\sphpsk)} \thh(\sphpsk \mid \sigma_k)\\
    &\simeq \thh(A) \wdg_{\thh(\sphpsmk)} \thh(\sphpsk \mid \sigma_k)\\
    & \simeq \thh(A) \wdg_{\thh(\sphpsmk)} \thh(\sphpsmk \mid \sigma_{mk})  \wdg_{\thh(\sphpsmk \mid \sigma_{mk})} \thh(\sphpsk \mid \sgmk)\\
    & \simeq \thh(A \mid a) \wdg_{\thh(\sphpsmk \mid \sgmmk)} \thh(\sphpsk \mid \sgmk)\\
    & \simeq \thh(A \mid a) \wdg_{\thh(\sphpsmk\mid \sigma_{mk})} \big( \thh(\sphpsmk \mid \sigma_{mk}) \wdg_{\sphpsmk} \sphpsk \big)\\
    & \simeq \thh(A\mid a)  \wdg_{\sphpsmk} \sphpsk\\
    \end{split}
\end{equation*}
The first and the fifth  equivalences follow by the definition of logarithmic THH, the second equivalence follows by our definition of root adjunction and Proposition \ref{prop thh of relative smash product} and the sixth equivalence follows by Proposition \ref{prop free algebra log thh etale}.
\end{proof}
\begin{rema}
     In \cite{rognes2018logthhofku}, the authors show that $\ell \to ku_{(p)}$ is logarithmic THH-\'etale. This compares to our result above since we show that $ku_p \simeq \ell_p(\sqrt[p-1]{v_1})$ in Theorem \ref{theo complex kthry as root adjunction}. 
\end{rema}


\subsection{Relating THH and logarithmic THH}

The goal of this section is to show that there is a fiber sequence 
\[\thh(A) \to \thh(A \mid a) \to \Sigma \thh(A/a)\]
under our usual assumptions. The $E_1$-ring $A/a$ above is described in the following construction which is analogous to \cite[Lemmas 6.14 and 6.15]{rognes2015localization}. 
\begin{cons}\label{cons quotient algebra}
Let $A$ be an $\sphsk$-algebra where $\sigma_k$ acts through $a \in \pi_kA$ where $k \geq 0$ is even. The weight $0$ Postnikov section  of $\sphsk$ provides a map $\sphsk \to \sph$ of $E_2$-rings \cite[B.0.6]{hahn2020redshift}. Considering the extension of scalars functor $- \wdg_{\sphsk}\sph$ from the $\infty$-category of  $E_1$  $\sphsk$-algebras to the $\infty$-category of $E_1$ $\sph$-algebras, one equips 
\[A/a := A \wdg_{\sphsk} \sph \]
with the structure of an $E_1$-ring spectrum. Since $\sph$ is the cofiber of the map $\sphsk \xrightarrow{\cdot \sigma_k } \sphsk$, $A/a$ is indeed the cofiber of the map $A \xrightarrow{\cdot a} A$.
\end{cons}

Considering $\sphsk$ as a graded $E_2$-ring, we have 
\[\thh(\sphsk)_0 \simeq \sph.\]
This can be observed by inspection on the cyclic bar construction on $\sphsk$ or by computing the $\hz$-homology of the left hand side above. This is used in the statement of the following proposition.

\begin{rema}
 The following proposition is analogous to \cite[Proposition 6.11]{rognes2015localization}. We remark that unlike in loc cit., we do not take $S^1$-equivariance into account, which leads to a  simpler proof. 
\end{rema}
\begin{prop}\label{prop sphere thh to log thh cofiber}
The cofiber of the map 
\[\thh(\sph[\sigma_k]) \to \thh(\sphsk \mid \sigma_k)\]
 is given by $\Sigma \sph$ concentrated in weight  $0$ as a left $\thh(\sph[\sigma_k])$-module in graded spectra. Here, the left $\thh(\sph[\sigma_k])$-module structure on $\sph$ is given by the weight-Postnikov truncation map of graded $E_1$-rings
 \[\thh(\sph[\sigma_k]) \to \thh(\sph[\sigma_k])_{0} \simeq \sph.\]
\end{prop}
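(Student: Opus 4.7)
The plan is to compute the cofiber $C$ of the map $\thh(\sph[\sigma_k]) \to \thh(\sph[\sigma_k]\mid \sigma_k) = \thh(\sph[\sigma_k^\pm])_{\geq 0}$ weight by weight in graded spectra, and to show that $C_n \simeq 0$ for $n \neq 0$ while $C_0 \simeq \Sigma \sph$. Once this is established, the left $\thh(\sph[\sigma_k])$-module structure on $C$, being concentrated in weight $0$, automatically factors through the weight-zero truncation $\thh(\sph[\sigma_k]) \to \thh(\sph[\sigma_k])_0 \simeq \sph$, yielding the required module identification.

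First I would observe that in each weight $n \geq 0$ both $\thh(\sph[\sigma_k])_n$ and $\thh(\sph[\sigma_k^\pm])_{\geq 0,n}$ are connective. At each simplicial level of the cyclic bar construction, the weight $n$ summand of $\sph[\sigma_k^\pm]^{\wedge(s+1)}$ is a wedge of copies of $\sph^{nk}$ indexed by tuples in $\z^{s+1}$ summing to $n$, which is connective for $n \geq 0$; geometric realization preserves connectivity. For $n < 0$ the source is trivial since only nonnegative powers of $\sigma_k$ occur in $\sph[\sigma_k]$, and the target is trivial by the very definition of the weight-connective cover, so $C_n \simeq 0$ automatically in that range.

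For each $n > 0$ I would apply Lemma \ref{lemm map of bounded below spectra} to the weight $n$ map $\thh(\sph[\sigma_k])_n \to \thh(\sph[\sigma_k]\mid \sigma_k)_n$. By Example \ref{exam log thh of the free polynomial  algebra} the induced map on $\hz$-homology in weight $n$ is the map $\z\{\sigma_k^n,\, \sigma_k^{n-1} d\sigma_k\} \to \z\{\sigma_k^n,\, \sigma_k^n \textup{dlog}\,\sigma_k\}$ sending generators to generators via the identity $d\sigma_k = \sigma_k \cdot \textup{dlog}\,\sigma_k$, which is an isomorphism. Hence the map is an $\hz$-equivalence of connective spectra, and thus an equivalence, so $C_n \simeq 0$ for all $n > 0$.

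For weight $0$, I would combine $\thh(\sph[\sigma_k])_0 \simeq \sph$ (noted just before the proposition) with the computation $\hz_* \thh(\sph[\sigma_k]\mid \sigma_k)_0 = \z \oplus \z\{\textup{dlog}\,\sigma_k\}$ from Example \ref{exam log thh of the free polynomial  algebra}, so that $C_0$ is a connective spectrum with $\hz$-homology equal to $\z$ concentrated in degree $1$. Since $\pi_0(\hz \wedge C_0) = 0$, Hurewicz implies $C_0$ is $1$-connective and provides a map $\Sigma \sph \to C_0$ lifting a generator of $\pi_1(\hz \wedge C_0)$; this is an $\hz$-equivalence between bounded-below spectra, hence an equivalence by Lemma \ref{lemm map of bounded below spectra}. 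The main subtlety I anticipate is keeping the graded bookkeeping consistent, in particular that $\hz$ placed in weight $0$ allows us to detect equivalences weightwise; once that is in place, the identification of the left $\thh(\sph[\sigma_k])$-module structure is formal from the fact that $C$ is concentrated in weight $0$.
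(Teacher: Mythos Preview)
Your proof is correct and follows essentially the same strategy as the paper: both arguments compute $H\mathbb{Z}$-homology of the cofiber via the description of $H\mathbb{Z}_*\thh(\sph[\sigma_k]\mid\sigma_k)$ in Example~\ref{exam log thh of the free polynomial  algebra}, observe that only the weight-$0$ class $\textup{dlog}\,\sigma_k$ is not in the image, and then use connectivity (Proposition~\ref{prop log thh of free are connective }) together with Lemma~\ref{lemm map of bounded below spectra} to conclude $C\simeq\Sigma\sph$. The only cosmetic difference is that the paper identifies the $\thh(\sph[\sigma_k])$-module structure by producing an explicit map $\Sigma\thh(\sph[\sigma_k])\to C$ and truncating, whereas you argue directly that concentration in weight $0$ forces the module structure to factor through $\thh(\sph[\sigma_k])_0\simeq\sph$; both are valid.
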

\begin{proof}
Let $M$ be the cofiber of the map $f$ below in left $\thh(\sphsk)$-modules in graded spectra.
\[ \thh(\sph[\sigma_k]) \xrightarrow{f}   \thh(\sphsk, \mid \sigma_k) \to M\]
We start by computing $\hz_*M$. The map 
\[\hz_*f \co \thh^{\hz}_*(\hz[\sigma_k]) \to \thh^{\hz}_*(\hz[\sigma_k^\pm])_{\geq0}\]
is given by the ring map 
\[\z[\sigma_k] \otimes \Lambda(d(\sigma_k)) \to \z[\sigma_k] \otimes \Lambda(\textup{dlog}\sigma_k)\]
that carries $\sigma_k$ to $\sigma_k$ and $d(\sigma_k)$ to $\sigma_k \text{dlog}\sigma_k$; this follows by the B\"oksedt spectral sequences in \eqref{eq bokstedt ss for polynomial dgas} and  \eqref{eq bokstedt ss for laurant polynomials}. This map is injective and the only class that is not in the image is $\text{dlog} \sigma_k$. We obtain, 
\[\hz \wdg M \simeq \Sigma \hz\]
where the right hand side is concentrated in weight $0$.
Due to Proposition \ref{prop log thh of free are connective }, $f$ is a map between connective spectra. In particular, $M$ is connective and we obtain an equivalence of spectra 
\[M \simeq \Sigma \sph.\]

We need to improve this to an equivalence of  left $\thh(\sphsk)$-modules in graded spectra. Since $M$ is a left $\thh(\Sph[\sigma_{k}])$-module in graded spectra, there is a map 
\[\Sigma \thh(\sphsk) \to M\]
of  left $\thh(\sphsk)$-modules in graded spectra carrying $1$ to $1$ in homotopy. Taking weight $0$ Postnikov sections \cite[B.0.6]{hahn2020redshift}, we obtain an equivalence  of left $\thh(\sphsk)$-modules in graded spectra 
\[ \Sigma \thh(\sphsk)_0 \xrightarrow{\simeq } M.\]
This map is an equivalence because it carries $1$ to $1$ in homotopy by construction and since both sides are equivalent as spectra to $\Sigma \sph$.

\end{proof}

We are ready to provide the cofiber sequence relating THH to logarithmic THH.
\begin{theo}\label{theo cofiber seq for thh to log thh}
Let $A$ be an $\sphsk$-algebra where $\sigma_k$ acts through $a \in \pi_kA$ with even $k \geq 0$. In this situation, there is a cofiber sequence of spectra:
\[\thh(A) \to \thh(A \mid a) \to \Sigma \thh(A/a).\]
The corresponding cofiber sequence for $\thh(\artmx \mid \sqrt[m]{a})$ is a cofiber sequence of $m$-graded spectra.
\end{theo}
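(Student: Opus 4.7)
The plan is to obtain the desired cofiber sequence by base-changing the fundamental cofiber sequence of Proposition~\ref{prop sphere thh to log thh cofiber} along the $E_1$-algebra map $\thh(\sphsk)\to \thh(A)$. More precisely, I would apply the extension of scalars functor $\thh(A)\wdg_{\thh(\sphsk)}(-)$ to the cofiber sequence of left $\thh(\sphsk)$-modules
\[
\thh(\sphsk)\longrightarrow \thh(\sphsk\mid \sigma_k)\longrightarrow \Sigma\sph
\]
supplied by Proposition~\ref{prop sphere thh to log thh cofiber}. Since this functor is exact, it yields a cofiber sequence
\[
\thh(A)\wdg_{\thh(\sphsk)}\thh(\sphsk)\ \longrightarrow\ \thh(A)\wdg_{\thh(\sphsk)}\thh(\sphsk\mid\sigma_k)\ \longrightarrow\ \thh(A)\wdg_{\thh(\sphsk)}\Sigma\sph.
\]

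Next I would identify the three terms. The leftmost one is manifestly $\thh(A)$, and by Definition~\ref{defi log thh of general X} the middle one is $\thh(A\mid a)$. For the rightmost, the key point is that the left $\thh(\sphsk)$-module structure on $\Sigma\sph$ in Proposition~\ref{prop sphere thh to log thh cofiber} is induced by the weight-zero Postnikov truncation $\thh(\sphsk)\to \thh(\sphsk)_0\simeq\sph$, and this is precisely the module structure that appears when computing $\thh(\sph)$ as a $\thh(\sphsk)$-module via the $E_2$-algebra map $\sphsk\to\sph$ of Construction~\ref{cons quotient algebra}. Therefore Proposition~\ref{prop thh of relative smash product}, applied to $A/a=A\wdg_{\sphsk}\sph$, gives
\[
\thh(A)\wdg_{\thh(\sphsk)}\sph\ \simeq\ \thh(A)\wdg_{\thh(\sphsk)}\thh(\sph)\ \simeq\ \thh(A\wdg_{\sphsk}\sph)\ \simeq\ \thh(A/a),
\]
and the $\Sigma$ is carried through the smash product. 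Combining these identifications produces the claimed cofiber sequence.

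For the second assertion about $\artmx$, I would run the same argument but throughout work with $m$-graded spectra as in Construction~\ref{cons graded structure on log thh}. The cofiber sequence of Proposition~\ref{prop sphere thh to log thh cofiber} is already stated in the graded setting, and $\artmx$ is an $m$-graded $\sphsk$-algebra in which $\sigma_k$ acts through $\sqrt[m]{a}$, so applying $\thh(\artmx)\wdg_{\thh(\sphsk)}(-)$ in $m$-graded spectra yields a cofiber sequence of $m$-graded spectra whose three terms are $\thh(\artmx)$, $\thh(\artmx\mid\sqrt[m]{a})$, and $\Sigma\thh(\artmx/\sqrt[m]{a})$.

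I do not anticipate a serious obstacle: the technical content has been packaged into Proposition~\ref{prop sphere thh to log thh cofiber} (the identification of the cofiber for the free algebra) and Proposition~\ref{prop thh of relative smash product} (base change for $\thh$). The only genuine point to verify carefully is that the two $\thh(\sphsk)$-module structures on $\sph$ that show up, the one used to define the cofiber in Proposition~\ref{prop sphere thh to log thh cofiber} and the one coming from viewing $\sph=\thh(\sph)$ as a $\thh(\sphsk)$-module via $\sphsk\to\sph$, genuinely agree; this is clear because both are induced by the weight-zero Postnikov truncation of $E_2$-rings $\sphsk\to\sph$ of \cite[B.0.6]{hahn2020redshift}.
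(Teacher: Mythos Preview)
Your proposal is correct and follows essentially the same route as the paper: apply $\thh(A)\wdg_{\thh(\sphsk)}(-)$ to the cofiber sequence of Proposition~\ref{prop sphere thh to log thh cofiber}, then identify the three terms, using Proposition~\ref{prop thh of relative smash product} for the cofiber. The one place where the paper is slightly more explicit is in comparing the two $\thh(\sphsk)$-module structures on $\sph$: rather than asserting agreement, it uses the universal property of the weight-$0$ Postnikov section to produce a factorization $\thh(\sphsk)\to\thh(\sphsk)_0\xrightarrow{\simeq}\thh(\sph)$ of the map induced by $\sphsk\to\sph$, and checks the second arrow is an equivalence since both sides are $\sph$ and the unit maps to the unit.
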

\begin{proof}
 Proposition \ref{prop sphere thh to log thh cofiber} provides the following cofiber sequence of  left $\thh(\sphsk)$-modules in graded spectra.
\[\thh(\sphsk) \to \thh(\sphsk \mid \sigma_k) \to \Sigma \sph\]
 Applying the functor $\thh(A) \wdg_{\thh(\sphsk)} -$ to this cofiber sequence, we obtain the following cofiber sequence
\begin{equation}\label{eq first eq in pf of cof seq of log thh}
    \thh(A) \to \thh(A \mid a) \to \thh(A) \wdg_{\thh(\sphsk)} \Sigma \sph.
\end{equation}

What is left is to identify the cofiber above as $\thh(A/a)$. We have 
\begin{equation}\label{eq moving suspension out}
    \begin{split}
        \thh(A) \wdg_{\thh(\sphsk)} \Sigma \sph &\simeq \Sigma \thh(A) \wdg_{\thh(\sphsk)}  \sph.\\
    \end{split}
\end{equation}
Here, $\sph$ on the right hand side denotes the de-suspension of $\Sigma\thh(\sphsk)_0$ as a left $\thh(\sphsk)$-module in graded spectra, see Proposition \ref{prop sphere thh to log thh cofiber}. This is $\thh(\sphsk)_0$ which admits the structure of a graded $E_1$-ring spectrum equipped with a map $\thh(\sphsk) \to \thh(\sphsk)_0$ of graded $E_1$-ring spectra given by the relevant weight $0$ Postnikov section map. Indeed, due to the universal property of Postnikov sections, this weight $0$ Postnikov section map factors the  map of graded $E_1$-rings $\thh(\sphsk) \to \thh(\sph)$ induced by the weight $0$ Postnikov section map $\sphsk \to \sph$; i.e.\ we have a factorization of this map of graded $E_1$-algebras as 
\[\thh(\sphsk) \to \thh(\sphsk)_{0} \xrightarrow{\simeq } \thh(\sph).\]
The second map above is an equivalence as its domain and codomain are equivalent to $\sph$ as spectra and it carries the unit to the unit by construction. In particular, we can replace $\sph$ on the right hand side of \eqref{eq moving suspension out} with $\thh(\sph)$. This provides the first equivalence below.
\begin{equation}\label{eq second eq for log thh cofiber sequence}
    \begin{split}
         \Sigma \thh(A) \wdg_{\thh(\sphsk)}  \sph & \simeq \Sigma \thh(A) \wdg_{\thh(\sphsk)} \thh(\sph)\\
          &\simeq \Sigma \thh(A \wdg_{\sphsk} \sph)\\
        & \simeq \Sigma \thh(A/a)
    \end{split}
\end{equation}
The second equivalence  follows by Proposition \ref{prop thh of relative smash product} and the third equivalence follows by our description of the $E_1$-algebra $A/a$ in Construction \ref{cons quotient algebra}. Equations \eqref{eq moving suspension out} and \eqref{eq second eq for log thh cofiber sequence} identify the cofiber in \eqref{eq first eq in pf of cof seq of log thh} as $\thh(A/a)$ providing the cofiber sequence claimed in the theorem.

The statement regarding the  cofiber sequence in $m$-graded spectra for $\artmx$ follows by utilizing same arguments.
\end{proof}

\begin{rema}
The above localization sequence is of fundamental importance in the theory of log THH. A proof of the above localization sequence for general $E_2$ log structures, using more general methods, will be supplied in \cite{logstructures}. 
\end{rema} 

\subsection{THH after root adjunction}

Here, we identify $\thh(\artmx)$ in terms of $\thh(A)$ and $\thh(A \mid a)$. \begin{theo}[Theorem \ref{theo intro thh after root adjunction}]\label{theo thh after root adjunction}
Assume Hypothesis \ref{hypo root adjunction} with $p\nmid m$ and that  $A$ is $p$-local. In this situation, the $m$-graded spectrum $\thh(\artmx)$ is given by 
\[\thh(\artmx)_0 \simeq \thh(A)\]
and
\[\thh(\artmx)_i \simeq \Sigma^{ik} \thh(A\mid a) \textup{ \ for \ } 0<i <m.\]
In particular, there is  an equivalence of spectra:
\[\thh(\artmx) \simeq \thh(A) \vee \big(\bigvee_{0<i<m}\Sigma^{ik}\thh(A \mid a) \big).\]
\end{theo}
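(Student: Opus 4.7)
The plan is to identify each weight piece of $\thh(\artmx)$ separately, and then assemble them into the wedge decomposition. Recall that by Construction \ref{cons graded structure on log thh} and the preceding discussion, $\thh(\artmx)$ has the structure of an $m$-graded $S^1$-equivariant spectrum, and since $\z/m$ is discrete, the underlying spectrum of any $m$-graded spectrum is the wedge of its weight pieces. Therefore, it suffices to compute $\thh(\artmx)_i$ for each $0 \leq i < m$.

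The weight zero piece is handled immediately by Theorem~\ref{theo weight zero of THH after root adjunction}, which gives an equivalence $\thh(A) \xrightarrow{\simeq} \thh(\artmx)_0$ induced by the canonical map $A \to \artmx$. This covers the $i=0$ summand in the claimed decomposition.

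For $0 < i < m$, the strategy is to compare $\thh(\artmx)$ with its logarithmic variant $\thh(\artmx \mid \sqrt[m]{a})$ via the localization cofiber sequence from Theorem~\ref{theo cofiber seq for thh to log thh}. Applied to $\artmx$ as an $\sphsk$-algebra where $\sigma_k$ acts by the newly adjoined root, this gives an $m$-graded cofiber sequence
\[\thh(\artmx) \to \thh(\artmx \mid \sqrt[m]{a}) \to \Sigma\,\thh\!\bigl(\artmx/\sqrt[m]{a}\bigr).\]
The key observation is that the cofiber is concentrated in weight zero: by Construction~\ref{cons quotient algebra}, $\artmx/\sqrt[m]{a} = \artmx \wdg_{\sphsk} \sph$, and since $\sph$ is concentrated in weight zero as an $\sphsk$-module in $m$-graded spectra, so is the quotient, and hence so is its THH. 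Consequently, for each $0 < i < m$, the cofiber sequence gives an equivalence
\[\thh(\artmx)_i \xrightarrow{\simeq} \thh(\artmx \mid \sqrt[m]{a})_i.\]
Now Theorem~\ref{theo root adj is log thh etale } (logarithmic THH-\'etaleness of root adjunction) identifies the right-hand side with $\Sigma^{ik}\thh(A \mid a)$, giving the desired description of the positive-weight pieces.

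Assembling these computations and using that an $m$-graded spectrum splits as the wedge of its weight pieces yields the claimed equivalence
\[\thh(\artmx) \simeq \thh(A) \vee \Bigl(\bigvee_{0<i<m} \Sigma^{ik}\thh(A \mid a)\Bigr).\]
The main subtlety is ensuring that the cofiber $\Sigma\thh(\artmx/\sqrt[m]{a})$ really is concentrated in weight zero so that it contributes only to the weight-zero comparison (which is already independently determined), but this is immediate from the grading on the quotient construction. Everything else is a direct assembly of the three preceding theorems: weight-zero identification, log THH-\'etaleness, and the log THH localization sequence.
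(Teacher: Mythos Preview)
Your approach is essentially identical to the paper's: weight zero via Theorem~\ref{theo weight zero of THH after root adjunction}, positive weights via the log THH localization sequence (Theorem~\ref{theo cofiber seq for thh to log thh}) combined with log THH-\'etaleness (Theorem~\ref{theo root adj is log thh etale }).

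There is, however, one gap in your justification. You assert that $\artmx/\sqrt[m]{a} = \artmx \wdg_{\sphsk} \sph$ is concentrated in weight zero ``since $\sph$ is concentrated in weight zero as an $\sphsk$-module.'' This does not follow: $\artmx$ itself has nontrivial pieces in every weight $0,1,\dots,m-1$, and smashing over $\sphsk$ with something concentrated in weight zero does not force the result into weight zero (an $\sphsk$-module placed in weight $1$ would give a counterexample). The paper closes this gap by actually simplifying the quotient using the definition $\artmx = A \wdg_{\sphsmk} \sphsk$:
\[
\artmx/\sqrt[m]{a} \;\simeq\; A \wdg_{\sphsmk} \sphsk \wdg_{\sphsk} \sph \;\simeq\; A \wdg_{\sphsmk} \sph,
\]
and now all three factors $A$, $\sphsmk$, $\sph$ are concentrated in weight zero, so the bar construction computing the relative smash product is as well. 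With this one extra line, your argument is complete and matches the paper's.
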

\begin{proof}
The identification of $\thh(\artmx)_0$ is provided by Proposition \ref{theo weight zero of THH after root adjunction}. Therefore, it is sufficient to provide the identification of $\thh(\artmx)_i$ for $i\neq 0$.

Due to Theorem \ref{theo root adj is log thh etale }, 
\begin{equation}\label{eq refer back to logthhetale}
    \thh(\artmx \mid \sqrt[m]{a})_i \simeq \Sigma^{ik} \thh(A \mid a).
\end{equation}
Therefore, it is sufficient to show that 
\begin{equation}\label{eq thh is log thh in nonzero}
    \thh(\artmx)_i \simeq \thh(\artmx \mid \sqrt[m]{a})_i
\end{equation}
whenever $i \neq 0$. 
This follows once we show that the cofiber of the the map 
\begin{equation}\label{eq thh to log thh for root adj}
    \thh(\artmx) \to \thh(\artmx \mid \sqrt[m]{a})
\end{equation}
of $m$-graded spectra is concentrated in weight $0$. Due to Theorem \ref{theo cofiber seq for thh to log thh}, the cofiber of this map is given by \[\thh(\artmx/\sqrt[m]{a})\]
where $\artmx/\sqrt[m]{a}$ is defined to be $\artmx \wdg_{\sphsk} \sph$. Therefore, we have 
\[\artmx/\sqrt[m]{a} := \artmx \wdg_{\sphsk} \sph \simeq A \wdg_{\sphsmk} \sphsk \wdg_{\sphsk} \sph \simeq A \wdg_{\sphsmk} \sph.\]
Since $A$, $\sphsmk$ and $\sph$ are concentrated in weight $0$,
we obtain that $\artmx/\sqrt[m]{a}$ and therefore $\thh(\artmx/\sqrt[m]{a})$ are also concentrated in weight $0$. This proves that the cofiber of \eqref{eq thh to log thh for root adj} is concentrated in weight $0$ which proves \eqref{eq thh is log thh in nonzero} and this, together with \eqref{eq refer back to logthhetale} proves the theorem.
\end{proof}

\section{Algebraic $\kth$-theory of complex and real topological $\kth$-theories}\label{sec alg k theory of cmplx and real k theories}


Here, we start by showing that $\kth(ku_p)$ splits into $p-1$ non-trivial summands.  Afterwards, we show that $ku_p$ may be constructed from $ko_p$ via root adjunction. We use this to  obtain an explicit description of the $V(1)$-homotopy of $\kth(ko_p)$ from the first authors computation of $V(1)_*\kth(ku_p)$ \cite{ausoni2010kthryofcomplexkthry}.

\subsection{Adams' splitting result for $2$-vector bundles}

 Recall that $\pis \kup \cong \zp[u]$ and $\pis \ell_p \cong \zp[v_1]$ where $\lv u \rv = 2$ and $\lv v_1 \rv = 2p-2$. The $E_\infty$-map $\ell_p \to ku_p$ carries $v_1$ to $u^{p-1}$ in homotopy. For the rest of this section, we fix a map $\sph[\sigma_{2(p-1)}] \to \ell_p$ of $E_2$-algebras carrying $\sigma_{2(p-1)}$ to $v_1$ and perform root adjunction using this map. Recall from Theorem \ref{theo complex kthry as root adjunction} that there is an equivalence of $E_1$ $\ell_p$-algebras \[\ell_p(\sqrt[p-1]{v_1}) \simeq ku_p.\]

This equips $\kup$ with the structure of a $p-1$-graded $E_1$ $\ell_p$-algebra which further equips $\thh(\kup)$ with the structure of a $p-1$-graded $S^1$-equivariant spectrum. 

Let $p>3$ be a prime and let $V(1)$ denote the type-$2$ finite spectrum used in \cite{ausoni2005thhofku}; $V(1)$ is a homotopy ring spectrum. 

There is another grading on $V(1)_*\thh(\kup)$ that the first author calls the $\delta$-grading \cite{ausoni2005thhofku}. The group  $\Delta := \z/(p-1)$ acts on the $E_\infty$-ring  $ku_p$ through Adams operations. Let $\delta \in \Delta$ be a chosen generator and let $\alpha \in \fp^\times$ satisfy  $\pis (\sph/p \wdg \delta )(u) = \alpha u$ where
\[\pis (\sph/p \wdg \delta) \co \pis (\sph/p \wdg \kup) \to \pis (\sph/p \wdg \kup) \cong \fp[u].\]

We say $u^i$ has $\delta$-weight $i$ as $\pis (\sph/p \wdg \delta)(u^i) = \alpha^i u^i$. Similarly, one says $x \in V(1)_*\thh(\kup)$ has $\delta$-weight $i$ if the self map of $V(1)_*\thh(\kup)$ induced by $\delta$ carries $x$ to $\alpha^ix$. One defines $\delta$-weight in a similar way on other invariants of $ku_p$ \cite[Definition 8.2]{ausoni2005thhofku}.

\begin{prop}\label{prop weight and delta weight agree on k of ku}
The group $V(1)_*\thh(\kup)_i$ is given by the classes of $\delta$-weight $i$ in $V(1)_* \thh(\kup)$.
\end{prop}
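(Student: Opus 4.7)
The plan is to identify both the $(p-1)$-grading on $\thh(\kup)$ and the $\delta$-weight decomposition on $\vos\thh(\kup)$ as arising from a single $C_{p-1}$-action on $\kup$. The main tool is the symmetric monoidal character equivalence
\[
\on{Fun}(\Z/(p-1), \Sp_{p}) \simeq \on{Fun}(BC_{p-1}, \Sp_{p}),
\]
valid because $p-1$ is a unit in $\Z_p$. Under it, the weight-$i$ summand of a $(p-1)$-graded $p$-complete spectrum corresponds to the $\alpha^i$-isotypic component of the associated $C_{p-1}$-action, with $\alpha \in \fp^\times$ the primitive $(p-1)$-th root of unity paired with the chosen generator $\delta \in \Delta$.

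First I would apply this equivalence to the $(p-1)$-graded $E_1$ $\ell_p$-algebra $\kup \simeq \ell_p(\sqrt[p-1]{v_1})$ of Theorem \ref{theo complex kthry as root adjunction}, producing a $C_{p-1}$-action on the $E_1$ $\ell_p$-algebra $\kup$ whose effect on the weight-$1$ class $u$ is multiplication by $\alpha$. I would then identify this $C_{p-1}$-action with the restriction to $\Delta$ of the Adams action on $\kup$. Both actions fix $\ell_p$ pointwise — the weight-zero summand of the grading is exactly $\ell_p$, while $\ell_p \simeq \kup^{h\Delta}$ for the Adams action — and both act on $u$ by $\alpha$, so each exhibits $\ell_p \to \kup$ as a faithful $C_{p-1}$-Galois extension in the sense of Rognes. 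Since $\lv C_{p-1}\rv$ is a unit in $\Z_p$, such a Galois structure is unique up to equivalence (the $E_1$ $\ell_p$-algebra self-equivalences of $\kup$ of finite order are classified by $\Delta$ itself), so the two $C_{p-1}$-actions on $\kup$ coincide.

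Finally, by functoriality of $\thh(-)$, $\vo \wdg -$, and $\pi_*$, this common $C_{p-1}$-action transports to a $C_{p-1}$-action on $\vos\thh(\kup)$. Via the character equivalence, the weight-$i$ piece $\vos\thh(\kup)_i$ is the $\alpha^i$-isotypic component of this action; by the identification of the action with Adams, this isotypic component is precisely the $\delta$-weight $i$ subspace as defined in \cite{ausoni2005thhofku}. The main obstacle is the Galois-theoretic identification of the two $C_{p-1}$-actions on $\kup$, after which the argument is a direct unwinding of the character decomposition.
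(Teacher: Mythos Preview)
Your approach is correct in outline and genuinely different from the paper's. The paper argues computationally: it checks on the Hochschild complex that the weight grading and the $\delta$-weight grading agree on the $E_2$-page of the B\"okstedt spectral sequence for $\hhhfp(\hfp\wdg\kup)$, pushes this through to $H_*(V(1)\wdg\thh(\kup);\fp)$, and then uses injectivity of the Hurewicz map $V(1)_*\thh(\kup)\to H_*(V(1)\wdg\thh(\kup);\fp)$ to conclude. Your route via the symmetric monoidal character equivalence $\on{Fun}(\Z/(p-1),\Sp_p)\simeq\on{Fun}(BC_{p-1},\Sp_p)$ is more conceptual and would adapt immediately to any root adjunction with $p\equiv 1\bmod m$.

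The step you flag as the obstacle is indeed the only delicate one, and your justification for it is a bit loose. Asserting that a $C_{p-1}$-Galois structure is ``unique up to equivalence'' because $p-1$ is a unit does not by itself pin down the action: you need that the space $\on{Aut}_{E_1\text{-}\ell_p\text{-alg}}(\kup)$ is \emph{discrete}, not merely that its $\pi_0$ is $\Delta$. This can be obtained by the same manoeuvre as in Theorem~\ref{theo complex kthry as root adjunction}: pass to $KU_p$ over $L_p$, where the extension is \'etale and \cite[Theorem~1.10]{hesselholt2022dirac} (or the \'etale rigidity of \cite[\S7.5]{lurie2016higher}) gives a discrete mapping space, and then use the connective-cover adjunction to identify $\on{Map}_{E_1\text{-}\ell_p}(\kup,\kup)\simeq\on{Map}_{E_1\text{-}L_p}(KU_p,KU_p)$. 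Alternatively, and more economically, you do not need the full $C_{p-1}$-actions to agree: since the $\delta$-weight decomposition on $V(1)_*\thh(\kup)$ is determined by the single self-map induced by $\delta$, it suffices that the two generators are homotopic as $E_1$ $\ell_p$-algebra endomorphisms of $\kup$, and this already follows from the $\pi_0$ statement of \'etale rigidity over $L_p$. Either way, the invocation of Galois uniqueness should be replaced by this \'etale argument.
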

\begin{proof}

Since $\hfp \wdg \kup$ is a $p-1$ graded $E_1$ $\hfp$-algebra, there is a $p-1$-grading on $\hhfps(\hfps\kup)$. By inspection on the Hochschild complex, one observes that the $\delta$-weight grading on $\hhfps(\hfps ku_p)$ agrees with the weight grading. In particular, the $\delta$-weight grading and the weight grading agree on the second page of the B\"okstedt spectral sequence computing  $\hhfp(\hfp \wdg ku_p)$. Due to \cite[Section 9]{ausoni2005thhofku}, this shows that the $\delta$-weight grading and the weight grading agree on $\hhhfps(\hfp \wdg \kup)$.  Furthermore, there is a basis of  $\hhhfps(\hfp \wdg \kup)$ as an $\fp$-module where $\delta$-weight is defined for each basis element. Therefore, the $\hfp$-module $\hhhfp(\hfp \wdg \kup)$ splits as a coproduct of  suspensions of $\hfp$ in a way that the map  $\hhhfp(\hfp \wdg \delta)$ is given by the respective multiplication map corresponding to the $\delta$-weight on each cofactor. Using this, one observes that the $\delta$-weight and the weight grading agree on  $H_*(V(1) \wdg \thh(ku_p);\fp)$. 

The Hurewicz map 
\[V(1)_* \thh(ku_p) \to H_*(V(1) \wdg \thh(ku_p);\fp)\]
is injective and this map preserves both gradings. From this, we deduce that the weight grading and the $\delta$-weight grading agree on $V(1)_* \thh(ku_p)$.
\end{proof}

In general, THH of $m$-graded ring spectra may not result in an $m$-graded cyclotomic spectrum as the Frobenius map do not preserve the grading; it multiplies the grading by $p$. On the other hand, for $ku_p$, $\thh(\kup)$ is $p-1$-graded and $p= 1$ in $\z/(p-1)$. In particular, the Frobenius map preserves the grading and one obtains that $\thh(ku)$ is a $p-1$-graded cyclotomic spectrum. 
\begin{prop}\label{prop thh ku is graded cyclotomic}
The $S^1$-equivariant structure on  $\thh(\kup)_i$ lifts to a cyclotomic structure for which there is an equivalence 
\[\thh(ku) \simeq \prod_{i \in \z/(p-1)}\thh(ku)_i\]
of cyclotomic spectra.
\end{prop}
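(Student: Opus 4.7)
The plan is to exploit the general fact, recorded in \cite[Appendix A]{antieau2020beilinson}, that for an $m$-graded $E_1$-ring $Y$ the topological Hochschild homology $\thh(Y)$ acquires the structure of an $m$-graded $S^1$-equivariant spectrum, and the Frobenius is a map of the form
\[
\varphi_p \co \thh(Y)_i \longrightarrow \thh(Y)^{tC_p}_{ip},
\]
so that in general the grading is only preserved after multiplying indices by $p$. Thus $\thh(Y)$ is naturally an object of a ``twisted cyclotomic'' $\infty$-category, and it descends to a genuine $m$-graded cyclotomic spectrum exactly when multiplication by $p$ is the identity on $\Z/m$. The proposition will then follow by specializing $m = p-1$ and observing that $p \equiv 1 \pmod{p-1}$.

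First I would recall that, by Theorem \ref{theo complex kthry as root adjunction}, $ku_p \simeq \ell_p(\sqrt[p-1]{v_1})$ is a $(p-1)$-graded $E_1$-ring, so the results of \cite[Appendix A]{antieau2020beilinson} apply to give $\thh(ku_p)$ the structure of a $(p-1)$-graded cyclotomic spectrum in the twisted sense above. Next I would observe that the Frobenius map on each weight piece takes the form
\[
\varphi_p \co \thh(ku_p)_i \longrightarrow \thh(ku_p)^{tC_p}_{ip} \simeq \thh(ku_p)^{tC_p}_{i},
\]
the last identification being a consequence of $ip \equiv i \pmod{p-1}$. Combined with the fact that the $S^1$-action preserves the grading (it depends only on the cyclic structure on the bar construction, not on the weight), this shows that the comparison map $\varphi_p$ respects the splitting of $\thh(ku_p)$ into its weight pieces, and therefore each $\thh(ku_p)_i$ inherits a (bona fide) cyclotomic structure.

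Finally, since the full cyclotomic structure on $\thh(ku_p)$—the $S^1$-action together with the Frobenius $\varphi_p$—splits as a product over $i \in \Z/(p-1)$, the canonical map
\[
\thh(ku_p) \longrightarrow \prod_{i \in \Z/(p-1)} \thh(ku_p)_i
\]
is an equivalence of cyclotomic spectra, the underlying equivalence of $S^1$-equivariant spectra being an instance of \eqref{eq graded thh to non graded thh} (recall that finite products and finite coproducts of spectra agree). The only mildly subtle point is making precise the assertion that the Frobenius on a $(p-1)$-graded $E_1$-ring refines from a twisted to an honest cyclotomic structure on each summand; this is essentially formal from \cite[Appendix A]{antieau2020beilinson} once one notes that the congruence $p \equiv 1 \pmod{p-1}$ forces the Frobenius to be weight-preserving, and is the only step requiring any care.
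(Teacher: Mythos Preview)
Your proposal is correct and follows essentially the same approach as the paper: both invoke \cite[Appendix A]{antieau2020beilinson} to obtain the $L_p$-twisted cyclotomic structure on $\thh$ of a graded ring, and then observe that since $p\equiv 1\pmod{p-1}$ the twist is trivial, so the Frobenius $\varphi_p$ lands in the same weight piece and each $\thh(ku_p)_i$ is genuinely cyclotomic. Your write-up is somewhat more detailed than the paper's, but the argument is the same.
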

\begin{proof}
The monoid $\Z/ (p-1)$ satisfies the conditions in \cite[Appendix A]{antieau2020beilinson} needed  endow $\thh(ku)$ with an $L_p$ twisted cyclotomic  structure. However, since $ p \cong 1 \mod p-1$, this ends up being the identity functor on $\Z /(p-1)$-graded spectra. Thus one obtains a sequence of $S^1$-equivariant maps 
$$
\thh(ku)_i \to \thh(ku)^{t C_p}_{i}
$$
for each $i \in \Z / (p -1)$, which is precisely the relevant additional piece of structure needed to view this as a cyclotomic object.
\end{proof}
\begin{cons}\label{cons splitting of kthry ku}
Here, we construct a splitting of $\kth(\kup)$ using Proposition \ref{prop thh ku is graded cyclotomic}. Since the product mentioned in Proposition \ref{prop thh ku is graded cyclotomic} is a finite product, it is at the same time a coproduct. In particular, it commutes with all limits and colimits. Therefore, the fiber sequence defining $\tc(\kup)$ splits into a product of fiber sequences 
\[\tc(\kup)_i \to \thh(\kup)_i^{hS^1} \xrightarrow{(\varphi_p)_i - can_i } (\thh(\kup)_i^{tC_p})^{hS^1}.\]
Hence, there is a splitting of $\tc(ku_p)$:
\[\tc(\kup) \simeq \prod_{i \in \z/(p-1)}\tc(ku_p)_i\]
where $\tc(ku_p)_i := \tc(\thh(ku_p)_i)$. Arguing as in the proof of Theorem \ref{theo root adj k theory inclusion}, one obtains a map $ku_p \to H\zp$ of $p-1$-graded $E_1$-rings where $\hzp$ is concentrated in weight $0$. Therefore, the induced map $\thh(\kup) \to \thh(\zp)$  of $p-1$-graded  spectra is trivial in non-zero weight. By inspection on the product splitting of the fiber sequence defining $\tc(ku_p)$, we  consider $\tc(ku_p) \to \tc(\zp)$  as a map of $p-1$ graded spectra where $\tc(\zp)$ is concentrated in weight $0$.  Again, as in the proof of Theorem \ref{theo root adj k theory inclusion}, this splits the pull-back square (from Dundas-Goodwillie-McCarthy theorem) relating $\tc(\kup)$ to $\kth(\kup)$ resulting in a splitting of $\kth(\kup)$ that we denote by 
\[\kth(\kup) \simeq \bigvee_{i \in \z/(p-1)}\kth(\kup)_i.\]
Here, $\kth(\kup)_0 \simeq \kth(\ell_p)$ due to Theorem \ref{theo root adj k theory inclusion}. 
\end{cons}

To understand the resulting splitting of $\kth(ku_p)$, we  identify the $V(1)$-homotopy of each weight piece. The computation of $V(1)_*\kth(\kup)$ is due to the first author \cite[Theorem 8.1]{ausoni2010kthryofcomplexkthry} and these groups are given below. 
\begin{equation} \label{eq vone htpy of kthry of ku}
    \begin{split}
        V(1)_* \kth(\kup) \cong & \fp[b] \otimes \Lambda(\lambda_1,a_1) \oplus \fp[b] \otimes \fp \{\partial \lambda_1,\partial b, \partial a_1, \partial \lambda_1 a_1\} \\
        &\oplus \fp[b] \otimes \Lambda(a_1) \otimes \fp \{t^d \lambda_1 \mid 0<d<p \} \\
        & \oplus \fp[b] \otimes \Lambda(\lambda_1) \otimes \fp \{\sigma_n, \lambda_2 t^{p^2-p} \mid 1 \leq n \leq p-2 \}\\
        &\oplus \fp \{s \}
    \end{split}
\end{equation}
Here, $\lv b \rv = 2p+2$, $\lv \partial \rv = -1$, $\lv \lambda_1 \rv = 2p-1$, $\lv a_1 \rv = 2p+3$, $\lv \sigma_n \rv = 2n+1$, $\lv t \rv = -2$, $\lv \lambda_2 \rv = 2p^2-1$ and $\lv s \rv = 2p-3$. We assign weights to these classes in a way that turns $V(1)_*\kth(\kup)$ into a $p-1$-graded abelian group. The weights of $\sigma_n$, $b $, $a_1$, $ \partial$, $\lambda_1$, $ t$, $\lambda_2$ and $s$ are given by $n$, $1$, $1$, $0$, $0$, $0$, $0$ and $0$ respectively. Classes denoted by tensor products or products above have the canonical degrees and weights. Furthermore, the isomorphism above is that of $\fp[b]$-modules and $b^{p-1} = -v_2$.

\begin{theo}\label{theo kthry splitting for ku}
For the equivalence of spectra 
\[\kth(ku_p) \simeq \bigvee_{i \in \z/(p-1)} \kth(ku_p)_i\]
provided by Construction \ref{cons splitting of kthry ku}, there is an equivalence:
\[\kth(ku_p)_0\simeq \kth(\ell_p)\]
and there are isomorphisms 
\[\vos( \kth(ku_p)_i) \cong \big(\vos\kth(\kup)\big)_i\]
for each $i \in \z/(p-1)$ where the right hand side denotes the weight $i$ piece of the  $p-1$-grading on $\vos \kth(\kup)$ described above. 
\end{theo}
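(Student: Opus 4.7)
The first equivalence $\kth(ku_p)_0 \simeq \kth(\ell_p)$ follows immediately from Theorem \ref{theo root adj k theory inclusion} applied to the root adjunction $\ell_p \to \ell_p(\sqrt[p-1]{v_1}) \simeq ku_p$ of Theorem \ref{theo complex kthry as root adjunction}, since the weight zero summand in Theorem \ref{theo root adj k theory inclusion} is precisely the image of $\kth(A)$. For the identification of the higher-weight pieces, my plan is to trace the weight grading through each stage of Construction \ref{cons splitting of kthry ku} and then match it, at the level of $V(1)$-homotopy, with the $\delta$-weight grading used to describe $\vos\kth(\kup)$ in \eqref{eq vone htpy of kthry of ku}.

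The starting point is Proposition \ref{prop thh ku is graded cyclotomic}, which equips $\thh(ku_p)$ with the structure of a $p-1$-graded cyclotomic spectrum, together with Proposition \ref{prop weight and delta weight agree on k of ku}, which identifies the weight $i$ piece of $V(1)_*\thh(ku_p)$ with the $\delta$-weight $i$ classes. Because $p \equiv 1 \pmod{p-1}$, the Frobenius preserves this grading, so the fiber sequence defining $\tc(ku_p)$ splits off into its weight pieces and one obtains a corresponding decomposition of $V(1)_*\tc(ku_p)$. The Adams action of $\delta$ is by $E_\infty$-automorphisms of $ku_p$, hence acts naturally on $\thh(ku_p)$, $\tc(ku_p)$, and $\kth(ku_p)$; by naturality and Proposition \ref{prop weight and delta weight agree on k of ku}, the induced weight decomposition of $V(1)_*\tc(ku_p)$ coincides with the $\delta$-weight decomposition.

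To pass from $\tc$ to $\kth$, I would follow the argument of Theorem \ref{theo root adj k theory inclusion}: the weight zero Postnikov truncation map $ku_p \to H\z_p$ of $p-1$-graded $E_1$-rings yields a Dundas--Goodwillie--McCarthy pullback square that splits as a coproduct along the $p-1$-grading, with the $\kth(H\z_p)$ contribution concentrated in weight zero. Consequently, for each $i \neq 0$ in $\z/(p-1)$ the map $\kth(ku_p)_i \to \tc(ku_p)_i$ is an equivalence, so $\vos\kth(ku_p)_i \cong \vos\tc(ku_p)_i$. Combining this with the previous paragraph identifies $\vos\kth(ku_p)_i$ with the $\delta$-weight $i$ piece of $\vos\kth(ku_p)$; the weight assignments listed after \eqref{eq vone htpy of kthry of ku} are precisely the $\delta$-weights of the generators, which finishes the identification with $\bigl(\vos\kth(\kup)\bigr)_i$.

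The main obstacle I anticipate is the careful bookkeeping required to propagate the grading from $\thh$ through $\tc$ to $\kth$ while keeping track of both the weight grading coming from the $p-1$-graded $E_1$-ring structure of Construction \ref{cons adjroots} and the $\delta$-weight grading coming from Adams operations, and verifying that these two a priori different gradings are compatibly identified at each stage. Once the compatibility is known on $V(1)_*\thh(ku_p)$ from Proposition \ref{prop weight and delta weight agree on k of ku}, the remaining steps reduce to naturality of $\delta$ with respect to the constructions involved and the formal splitting properties already used in the proof of Theorem \ref{theo root adj k theory inclusion}.
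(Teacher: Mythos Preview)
Your argument for $\kth(ku_p)_0\simeq\kth(\ell_p)$ is correct and matches the paper's, which simply points back to Construction \ref{cons splitting of kthry ku}. The paper does however treat the case $i=0$ of the $V(1)$-identification separately: since the weight assignments in \eqref{eq vone htpy of kthry of ku} are set up so that the weight~$0$ part is exactly the image of $\vos\kth(\ell_p)\to\vos\kth(\kup)$, one needs to know that this map is injective, which is supplied by \cite[Theorem 10.2]{ausoni2005thhofku}. You do not address this point.

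For $i\neq 0$ your approach diverges from the paper's, and there is a genuine gap. Proposition \ref{prop weight and delta weight agree on k of ku} establishes only that the weight grading and the $\delta$-eigenspace decomposition agree on $V(1)_*\thh(ku_p)$; it does \emph{not} say that the Adams operation $\delta$ is a map of $p{-}1$-graded cyclotomic spectra on $\thh(ku_p)$. Naturality gives you a $\delta$-action on $\tc(ku_p)$ and on $\kth(ku_p)$, but it does not by itself give you that $\delta$ preserves the summands $\tc(ku_p)_i$ or $\kth(ku_p)_i$, nor that it acts on $\vos\tc(ku_p)_i$ by $\alpha^i$. The step ``by naturality and Proposition \ref{prop weight and delta weight agree on k of ku}, the induced weight decomposition of $V(1)_*\tc(ku_p)$ coincides with the $\delta$-weight decomposition'' is therefore unjustified as stated.

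The paper avoids this issue altogether. It observes, via Proposition \ref{prop weight and delta weight agree on k of ku}, that computing $\vos\tc(ku_p)_i$ amounts to tracking which classes of $\delta$-weight~$i$ in $\vos\thh(ku_p)$ survive to $\vos\tc(ku_p)$, and then appeals directly to the explicit spectral sequence computations in \cite[Sections 5 and 7]{ausoni2010kthryofcomplexkthry}, where this bookkeeping has already been carried out. In other words, the paper replaces your abstract naturality step with an inspection of the existing calculations. Your route could perhaps be completed by showing that $\delta$ is compatible with the graded cyclotomic structure (e.g.\ by arguing that any $\ell_p$-linear self-map of $ku_p$ is automatically graded, and then promoting this to $\thh$), but that is additional work you have not supplied.
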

\begin{proof}
The identification of $\kth(\kup)_0$ is given in Construction \ref{cons splitting of kthry ku}. This provides the identification of $\vos \kth(\kup)_0$ as $\big(\vos\kth(\kup)\big)_0$ since this is precisely the image of the map 
\[\vos \kth(\ell_p) \to \vos \kth(\kup),\]
see \cite[Theorem 10.2]{ausoni2005thhofku}. The identification of $\vos (\kth(ku_p)_i)$ for $i \neq 0 $ follows by noting from Proposition \ref{prop weight and delta weight agree on k of ku} that it is sufficient to keep track of the contribution of $\delta$-weight $i$ classes in $\vos \thh(\kup)$ to $\vos \tc(\kup)$. This  follows by inspection on \cite[Section 7]{ausoni2010kthryofcomplexkthry} and \cite[Section 5]{ausoni2010kthryofcomplexkthry}.
\end{proof}

\subsection{Algebraic $\kth$-theory of real $\kth$-theory}
Let $p>3$. Using Theorem \ref{theo root adj k theory inclusion}, the splitting of $\kth(\kup)$ discussed above and our root adjunction formalism, we obtain a straightforward computation of $V(1)_*\kth(ko_p)$ from our knowledge of $\vos \kth(\kup)$ from \cite{ausoni2010kthryofcomplexkthry}. Here, $ko_p$ denotes the connective cover of the $p$-completed real topological $\kth$-theory spectrum $KO_p$. We have $\pis KO_p \cong \zp[\alpha^\pm]$ with $\lv \alpha \rv = 4$.

There is a subgroup of $C_2$ of $\Delta\cong \z/(p-1)$ such that $KO_p \simeq KU_p^{hC_2}$.  Through this, the induced map $KO_p \to KU_p$ carries $\alpha$ to $u^2$ up to a unit that we are going to omit. Since $L\simeq (KU_p)^{h\Delta}$, we obtain a sequence of $E_\infty$-maps
\[L_p \to KO_p \to KU_p\]
where the first map carries $v_1$ to $\alpha^{\frac{p-1}{2}}$ in homotopy. 
\begin{theo}\label{prop real kthry as root adjunction}
For $p>3$, there is an equivalence 
\[ko_p \simeq \ell_p (\sqrt[\frac{p-1}{2}]{v_1})\]
of $E_1$ $\ell_p$-algebras.
\end{theo}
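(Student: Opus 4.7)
My plan is to follow the strategy used for Theorem \ref{theo complex kthry as root adjunction} essentially verbatim, replacing the degree $p-1$ root with the degree $\frac{p-1}{2}$ root and the $p$-complete complex $K$-theory with the $p$-complete real $K$-theory. The starting point is the fixed $E_2$-map $\sph_{(p)}[\sigma_{2(p-1)}]\to\ell_p$ sending $\sigma_{2(p-1)}\mapsto v_1$, which by Remark \ref{rema e2rings root adj are xalgebras} equips $\ell_p(\sqrt[\frac{p-1}{2}]{v_1})$ with the structure of an $E_1$ $\ell_p$-algebra whose homotopy ring is $\Z_p[z]$ with $|z|=4$ and $z^{(p-1)/2}=v_1$.

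I would then form the non-connective analogue
\[
B := \ell_p(\sqrt[\tfrac{p-1}{2}]{v_1})\wedge_{\ell_p}L_p,
\]
which is an $E_1$ $L_p$-algebra with $\pis B\cong \pis(L_p)[z]/(z^{(p-1)/2}-v_1)$. The key point is that this is an \'etale $E_1$ $L_p$-algebra in the sense of Hesselholt--Pstragowski: since $p>3$, the integer $\tfrac{p-1}{2}$ is a unit in $\pi_0 L_p=\Z_p$, so the polynomial $z^{(p-1)/2}-v_1$ is separable over $\pis L_p$. Parallel to the $KU_p$ case, there is an isomorphism of $\pis(L_p)$-algebras $\pis B\cong \pis(KO_p)$: the $E_\infty$-map $KO_p\to KU_p$ sends a periodicity generator $\alpha\in\pi_4 KO_p$ to $u^2$, and hence $\alpha^{(p-1)/2}\mapsto u^{p-1}$, which recovers the image of $v_1\in\pis L_p$ in $\pis KU_p$; it follows that $\pis KO_p\cong \pis(L_p)[\alpha]/(\alpha^{(p-1)/2}-v_1)$, and $\alpha\mapsto z$ provides the desired isomorphism.

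Having identified the homotopy rings, I would apply \cite[Theorem 1.10]{hesselholt2022dirac}: for an \'etale $E_1$ $L_p$-algebra, the equivalence class is determined by the underlying $\pis L_p$-algebra, and in particular $B\simeq KO_p$ as $E_1$ $L_p$-algebras. Finally, passing to connective covers in $E_1$ $\ell_p$-algebras, and noting that $\ell_p(\sqrt[\frac{p-1}{2}]{v_1})$ is already connective and serves as the connective cover of $B\simeq KO_p$, yields the desired equivalence $\ell_p(\sqrt[\frac{p-1}{2}]{v_1})\simeq ko_p$ of $E_1$ $\ell_p$-algebras.

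The only part that deserves care, and which I would regard as the main (though mild) obstacle, is Step~4: checking \emph{as $\pis L_p$-algebras} (not just as abstract rings) that $\pis KO_p$ is isomorphic to $\pis(L_p)[\alpha]/(\alpha^{(p-1)/2}-v_1)$. This amounts to tracing through the image of $v_1$ along the composite $L_p\hookrightarrow KU_p$ together with the identification $KO_p\simeq KU_p^{hC_2}$ under the Adams $\Delta$-action, which is the same input that was implicitly used in the proof of Theorem~\ref{theo complex kthry as root adjunction}; once this is established, the \'etaleness argument and the appeal to \cite[Theorem 1.10]{hesselholt2022dirac} go through mechanically.
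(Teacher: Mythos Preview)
Your proposal is correct and follows exactly the paper's approach: the paper's proof is the single line ``This follows as in the proof of Theorem~\ref{theo complex kthry as root adjunction} by noting that $p \nmid \tfrac{p-1}{2}$,'' and you have simply unpacked that argument in full, including the \'etaleness check (which is precisely the divisibility condition) and the appeal to \cite[Theorem~1.10]{hesselholt2022dirac}. The $\pis(L_p)$-algebra identification you flag in Step~4 is established in the paragraph immediately preceding the theorem, where the paper records that the $E_\infty$-map $L_p\to KO_p$ sends $v_1\mapsto \alpha^{(p-1)/2}$.
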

\begin{proof}
This follows as in the proof of  Theorem \ref{theo complex kthry as root adjunction}  by noting that $p \nmid \pmot$.
\end{proof}
Furthermore, $ku_p$ may also be obtained from $ko_p$ via root adjunction; for this root adjunction, we use the $\sphp[\sigma_4]$-algebra structure on $ko_p$ provided by Theorem \ref{prop real kthry as root adjunction}. To identify the resulting $2$-graded $E_1$-ring structure on $ku_p$, we use the symmetric monoidal functor \[D' \co \on{Fun}(\z/(p-1), \Sp) \to \on{Fun}(\z/2, \Sp) \]
given by left Kan extension through the canonical map $\z/(p-1) \to \z/2$.

\begin{prop}\label{prop adj root to kop}
For $p>3$, there is an equivalence 
\[ko_p(\sqrt[2]{\alpha}) \simeq D'(ku_p)\]
of $2$-graded $E_1$-algebras where $D'$ is  defined above and the $p-1$-grading on  $ku_p$ is given by Theorem \ref{theo complex kthry as root adjunction}.
\end{prop}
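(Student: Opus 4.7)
The plan is to realize both sides as the same smash product over $\mathbb{S}[\sigma_{2(p-1)}]$, viewed as a $2$-graded object in two different but compatible ways. First I would use Theorem~\ref{theo complex kthry as root adjunction} to write
\[
ku_p \simeq \ell_p \wedge_{\mathbb{S}[\sigma_{2(p-1)}]} \mathbb{S}[\sigma_2]
\]
as a $(p-1)$-graded $E_1$ $\ell_p$-algebra, with $\ell_p$ and $\mathbb{S}[\sigma_{2(p-1)}]$ concentrated in weight zero and $\sigma_2$ placed in weight one, exactly as in Construction~\ref{cons adjroots}. The quotient map $\mathbb{Z}/(p-1)\to\mathbb{Z}/2$ defining $D'$ is symmetric monoidal, so by \cite[Corollary~3.8]{nikolaus2016stablemultyoneda} the induced functor $D'$ is symmetric monoidal and commutes with the relative smash product. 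Hence
\[
D'(ku_p) \simeq \ell_p \wedge_{\mathbb{S}[\sigma_{2(p-1)}]} D'(\mathbb{S}[\sigma_2]).
\]

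Next, since left Kan extensions compose and the composite $\mathbb{Z}\to\mathbb{Z}/(p-1)\to\mathbb{Z}/2$ agrees with the canonical quotient $\mathbb{Z}\to\mathbb{Z}/2$, I would apply the identification $D'\circ D^{p-1}\simeq D^2$ to conclude that $D'(\mathbb{S}[\sigma_2])$ is equivalent, as a $2$-graded $E_2$-algebra, to the $2$-graded $\mathbb{S}[\sigma_2]$ used in Construction~\ref{cons adjroots}. Therefore
\[
D'(ku_p) \simeq \ell_p \wedge_{\mathbb{S}[\sigma_{2(p-1)}]} \mathbb{S}[\sigma_2]
\]
as a $2$-graded $E_1$ $\mathbb{S}[\sigma_2]$-algebra.

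For the other side, Theorem~\ref{prop real kthry as root adjunction} gives $ko_p \simeq \ell_p\wedge_{\mathbb{S}[\sigma_{2(p-1)}]}\mathbb{S}[\sigma_4]$, which is the $\mathbb{S}[\sigma_4]$-algebra structure used to form $ko_p(\sqrt[2]{\alpha})$. Combining, I obtain
\[
ko_p(\sqrt[2]{\alpha}) \simeq \bigl(\ell_p \wedge_{\mathbb{S}[\sigma_{2(p-1)}]} \mathbb{S}[\sigma_4]\bigr) \wedge_{\mathbb{S}[\sigma_4]} \mathbb{S}[\sigma_2] \simeq \ell_p \wedge_{\mathbb{S}[\sigma_{2(p-1)}]} \mathbb{S}[\sigma_2],
\]
matching $D'(ku_p)$.

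The main obstacle will be verifying that all three maps among the sheared polynomial algebras $\mathbb{S}[\sigma_{2(p-1)}] \to \mathbb{S}[\sigma_4] \to \mathbb{S}[\sigma_2]$, used implicitly above, compose to the same map of graded $E_2$-rings as the direct $\mathbb{S}[\sigma_{2(p-1)}] \to \mathbb{S}[\sigma_2]$ arising in the description of $ku_p$ as a root adjunction. Tracing through Proposition~\ref{prop map between free spheres}, each such map is obtained from a counit of the adjunction $L_n\dashv R_n$ induced by a multiplication map $\mathbb{Z}\xrightarrow{\cdot n}\mathbb{Z}$, applied to the shearing $\mathrm{sh}(\mathbb{S}[t])$. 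Since $(p-1)/2 \cdot 2 = p-1$, the factorization of multiplication maps $\mathbb{Z}\xrightarrow{\cdot (p-1)/2}\mathbb{Z}\xrightarrow{\cdot 2}\mathbb{Z}$ yields the required coherence at the level of $E_2$-algebras, so that $\sigma_{2(p-1)}\mapsto \sigma_4^{(p-1)/2}\mapsto (\sigma_2^2)^{(p-1)/2}=\sigma_2^{p-1}$ agrees with the direct $\sigma_{2(p-1)}\mapsto\sigma_2^{p-1}$. With this compatibility in hand, both chains of equivalences above produce the same $2$-graded $E_1$ $\mathbb{S}[\sigma_2]$-algebra, completing the proof.
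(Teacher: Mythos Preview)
Your proposal is correct and follows essentially the same approach as the paper: both sides are identified with $\ell_p \wedge_{\mathbb{S}[\sigma_{2(p-1)}]} D'(\mathbb{S}[\sigma_2])$ by using that $D'$ is symmetric monoidal and that $\ell_p$, $\mathbb{S}[\sigma_{2(p-1)}]$ are concentrated in weight zero, together with the collapse of the two-step root adjunction $(\ell_p \wedge_{\mathbb{S}[\sigma_{2(p-1)}]} \mathbb{S}[\sigma_4]) \wedge_{\mathbb{S}[\sigma_4]} \mathbb{S}[\sigma_2]$. The paper handles your ``main obstacle'' by simply declaring at the outset that it uses the sequence $\mathbb{S}[\sigma_{2(p-1)}] \to \mathbb{S}[\sigma_4] \to D'(\mathbb{S}[\sigma_2])$ (coming from the factorization of multiplication maps on $\mathbb{Z}$, as you note) to perform the root adjunction, so the compatibility is built in rather than checked after the fact.
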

\begin{proof}
Due to Theorem \ref{prop real kthry as root adjunction}, $ko_p$ is  an  $\sph[\sigma_4]$-algebra  given by
\[\ell_p \wdg_{\sph[\sigma_{2(p-1)}]}\sph[\sigma_4].\]
 To adjoin a root to $ko_p$ using this structure, we use the sequence of maps 
\[\sph[\sigma_{2(p-1)}] \to \sph[\sigma_4]\to D'(\sph[\sigma_2])\]
of $2$-graded $E_2$-ring spectra where $\sph[\sigma_{2(p-1)}]$ and $\sph[\sigma_4]$ are concentrated in weight $0$ and $\sph[\sigma_2]$ above is given its canonical $p-1$-grading so that $\sigma_2$ in $D'(\sph[\sigma_2])$ lies in weight $1$. 

We obtain the following equivalences of $2$-graded $E_1$-rings.
\begin{equation}\label{eq real ktheory root adjunction gives complex ktheory}
ko_p(\sqrt[2]{\alpha}) \simeq \ell_p \wdg_{\sph[\sigma_{2(p-1)}]} \sph[\sigma_{4}] \wdg_{\sph[\sigma_{4}]} D'(\sph[\sigma_2]) \simeq \ell_p \wdg_{\sph[\sigma_{2(p-1)}]} D'(\sph[\sigma_{2}])
\end{equation}

The functor $D'$ is a left adjoint and it is symmetric monoidal. Therefore, it commutes with the two sided bar construction defining relative smash products. This provides the second equivalence in the following equivalences of $2$-graded $E_1$-algebras. 
\begin{equation}\label{eq weight reduction for kup}
    D'(\kup)  \simeq D'(\ell_p \wdg_{\sph[\sigma_{2(p-1)}]} \sph[\sigma_{2}]) \simeq D'(\ell_p) \wdg_{D'(\sph[\sigma_{2(p-1)}])} D'(\sph[\sigma_{2}])
\end{equation} 
The first equivalence above follows by Theorem \ref{theo complex kthry as root adjunction} and the relative smash product in the middle is taken in $p-1$-graded spectra. Since $\ell_p$ and $\sph[\sigma_{2(p-1)}]$ are concentrated in weight $0$, the right hand side above is equivalent to the right hand side of \eqref{eq real ktheory root adjunction gives complex ktheory}; this follows by Lemma \ref{lem underlying of conc in weight zero}. In other words, \eqref{eq real ktheory root adjunction gives complex ktheory} and \eqref{eq weight reduction for kup} agree. 
\end{proof}

Recall that the spectra $\kth(ku_p)_i$ are given in Construction \ref{cons splitting of kthry ku} and the groups $V(1)_*\kth(ku_p)_i$ are identified in Theorem \ref{theo kthry splitting for ku}.
\begin{theo}\label{theo kthry of ko}
For $p>3$, there is an equivalence of spectra:
\[\kth(ko_p) \simeq \bigvee_{0\leq i<(p-1)/2}\kth(ku_p)_{2i}.\]
Therefore, we have 
    \[V(1)_*\kth(ko_p)\cong \bigoplus_{0 \leq i<(p-1)/2} V(1)_* \kth(ku_p)_{2i}.\]
   and $V(1)_*\kth(ko_p)$, as an abelian group, is given by:
    
\begin{equation*}
    \begin{split}
        V(1)_* \kth(ko_p) \cong & \fp[b^2] \otimes \Lambda(\lambda_1,ba_1) \oplus \fp[b^2] \otimes \fp \{\partial \lambda_1,b\partial b, b\partial a_1, b\partial \lambda_1 a_1\} \\
        &\oplus \fp[b^2] \otimes \Lambda(ba_1) \otimes \fp \{t^d \lambda_1 \mid 0<d<p \} \\
        & \oplus \fp[b^2] \otimes \Lambda(\lambda_1) \otimes \fp \{b^{\epsilon(n)}\sigma_n, \lambda_2 t^{p^2-p} \mid 1 \leq n \leq p-2 \}\\
        &\oplus \fp \{s \},
    \end{split}
\end{equation*}
where $\epsilon(n) = 1$ if $n$
 is odd and $\epsilon(n) = 0$ if $n$ is even. Here, the class denoted by  $(b^2)^{(p-1)/2}$ is $ -v_2$.
 
 As a consequence, we have an  isomorphism of abelian groups:
 \[T(2)_* \kth(ko) \cong T(2)_* \kth(\ell_p)[b^2]/((b^2)^{(p-1)/2}+v_2).\]
\end{theo}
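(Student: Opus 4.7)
The overall strategy combines the root-adjunction splitting of algebraic $\kth$-theory with Proposition~\ref{prop adj root to kop}, and then identifies the weight-zero summand using the $(p-1)$-grading on $\kth(\kup)$ from Construction~\ref{cons splitting of kthry ku}.

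First, Proposition~\ref{prop adj root to kop} gives $ko_p(\sqrt[2]{\alpha}) \simeq D'(\kup)$ as $2$-graded $E_1$-algebras, and since $p>3$ forces $p\nmid 2$, Theorem~\ref{theo root adj k theory inclusion} and Theorem~\ref{theo weight zero splitting for tc} apply to $A = ko_p$ with $m=2$. Together they yield an equivalence $\kth(ko_p) \simeq \kth(D'(\kup))_0$, the weight-zero piece of the $2$-grading produced by $D'$. The next task is to match this summand with $\bigvee_{0\leq i<(p-1)/2}\kth(\kup)_{2i}$. Since $D'$ is left Kan extension along the symmetric monoidal quotient $\z/(p-1)\to\z/2$, it records the coarser $2$-grading by $D'(X)_j\simeq\bigvee_{i\equiv j\bmod 2} X_i$. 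Since the cyclotomic structure on $\thh(\kup)$ already refines to the $(p-1)$-grading (Proposition~\ref{prop thh ku is graded cyclotomic}) and $p\equiv 1\bmod 2$, the $2$-graded cyclotomic structure on $\thh(D'(\kup))$ agrees with the collapse of the $(p-1)$-graded one, giving $\tc(D'(\kup))_0\simeq\bigvee_{i\text{ even}}\tc(\kup)_i$. The Dundas--Goodwillie--McCarthy pullback argument from the proof of Theorem~\ref{theo root adj k theory inclusion}, applied summand-by-summand across Construction~\ref{cons splitting of kthry ku}, upgrades this to the asserted decomposition of $\kth(ko_p)$.

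Applying $\vos$ to this wedge-summand decomposition and invoking Theorem~\ref{theo kthry splitting for ku}, the computation reduces to extracting the even-weight part of the $(p-1)$-graded abelian group \eqref{eq vone htpy of kthry of ku}. Recalling that $b$ and $a_1$ have weight $1$, $\sigma_n$ has weight $n$, and the generators $\lambda_1,\lambda_2,t,\partial,s$ all have weight $0$, a direct inspection of the four summands in \eqref{eq vone htpy of kthry of ku} produces the advertised description: coefficients reduce from $\fp[b]$ to $\fp[b^2]$; odd-weight classes pair with $b$ to form $ba_1$, $b\partial b$, $b\partial a_1$, $b\partial\lambda_1 a_1$; and $\sigma_n$ acquires the prefactor $b^{\epsilon(n)}$. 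Inverting $v_2=-b^{p-1}=-(b^2)^{(p-1)/2}$ on the resulting formula then gives the stated presentation of $T(2)_*\kth(ko)$ as a cyclic module over $T(2)_*\kth(\ell_p)$.

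The principal obstacle is the identification of the cyclotomic $2$-grading on $\thh(D'(\kup))$ arising from realizing $\kup$ as the root adjunction $ko_p(\sqrt[2]{\alpha})$ with the grading obtained by applying $D'$ to the cyclotomic $(p-1)$-grading of Proposition~\ref{prop thh ku is graded cyclotomic}. This reduces to functoriality of the constructions of \cite[Appendix A]{antieau2020beilinson} along the symmetric monoidal quotient $\z/(p-1)\to\z/2$, which is expected but deserves an explicit check.
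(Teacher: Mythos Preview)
Your approach is essentially the paper's, and the outline is correct. Two points where the paper is sharper:

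\medskip

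\textbf{On your ``principal obstacle.''} The paper sidesteps the need to compare two a priori different $2$-gradings on the cyclotomic structure. Instead of invoking functoriality of the Antieau--Mathew--Nikolaus construction along $\z/(p-1)\to\z/2$, it writes down the explicit composite of cyclotomic maps
\[
\thh(ko_p)\;\longrightarrow\;\thh(\kup)\;\simeq\;\prod_{0\le i<p-1}\thh(\kup)_i\;\longrightarrow\;\prod_{0\le i<(p-1)/2}\thh(\kup)_{2i},
\]
the first induced by $ko_p\to\kup$, the middle by Proposition~\ref{prop thh ku is graded cyclotomic}, the last the projection. Each map is visibly cyclotomic; the chain of identifications you already listed (Theorem~\ref{theo weight zero of THH after root adjunction}, Proposition~\ref{prop adj root to kop}, \cite[Corollary~A.15]{antieau2020beilinson}) shows the composite is an equivalence of underlying spectra, hence of cyclotomic spectra. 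Applying $\tc$ and then the Dundas--Goodwillie--McCarthy square for $ko_p\to\kup\to H\zp$ gives the $\kth$-splitting. This is cleaner than establishing the compatibility you flag.

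\medskip

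\textbf{On the $T(2)$ statement.} ``Inverting $v_2$ on the formula'' is not quite enough: you still need to recognise the result as $T(2)_*\kth(\ell_p)[b^2]/((b^2)^{(p-1)/2}+v_2)$. The paper uses the $\fp[b]$-module structure from \cite[Proposition~1.2(b)]{ausoni2010kthryofcomplexkthry} together with Theorem~\ref{theo kthry splitting for ku} to see that multiplication by $b^i$ gives isomorphisms $T(2)_*\kth(\ell_p)\cong T(2)_*\kth(\kup)_0\xrightarrow{\cong}T(2)_*\kth(\kup)_i$ for $0\le i<p-1$; restricting to even $i$ and using $T(2)_*\kth(ko)\cong T(2)_*\kth(ko_p)$ (purity and \cite[Lemma~2.2(vi)]{land2020purity}) gives the stated presentation.
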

\begin{proof}
We start by identifying $\thh(ko_p)$ as a cyclotomic spectrum. We have the following chain of equivalences \begin{equation*}
    \begin{split}
        \thh(ko_p) &\simeq \thh(ko_p(\sqrt[2]{\alpha}))_0 \\
        &\simeq \thh(D'(ku_p))_0\\
        &\simeq \big(D'(\thh(ku_p))\big)_0\\
        &\simeq \prod_{0 \leq i < (p-1)/2} \thh(ku_p)_{2i}
    \end{split}
\end{equation*}
The first equivalence above follows by Theorem \ref{theo weight zero of THH after root adjunction}, the second equivalence follows by Proposition \ref{prop adj root to kop} and the third equivalence is a consequence of \cite[Corollary A.15]{antieau2020beilinson}. The last equivalence above follows by the description of $D'$ as a left Kan extension, see Section \ref{subsec manipulations on grade objects}. Indeed, this shows that the following composite map of cyclotomic spectra is an equivalence. 
\[\thh(ko_p) \to \thh(ku_p) \simeq \prod_{0\leq i<p-1}\thh(ku_p)_i \to \prod_{0 \leq i < (p-1)/2} \thh(ku_p)_{2i}\]
Here, the equivalence in the middle follows by Proposition \ref{prop thh ku is graded cyclotomic}. The last map above is the canonical projection.

The composite equivalence of cyclotomic spectra above shows that 
\[\tc(\thh(ko_p)) \simeq \tc(\prod_{0\leq i<(p-1)/2} \thh(ku_p)_{2i}) \simeq \prod_{0\leq i<(p-1)/2} \tc(ku_p)_{2i}.\]
Considering the  Dundas-Goodwillie-McCarthy theorem with respect to the composite $ko_p\to ku_p \to H\zp$,  we obtain that the splitting of the pullback square relating $\kth(ku_p)$ with $\tc(ku_p)$ (mentioned in Construction \ref{cons splitting of kthry ku}) provides a splitting for $\kth(ko_p)$ given by
\begin{equation}\label{eq intermediate splitting of kthry of kop}
\kth(ko_p) \simeq \prod_{0\leq i<(p-1)/2}\kth(ku_p)_{2i}.
\end{equation}
The first and the second statements in the theorem follow from this splitting. The third statement follows by this, and by inspection on \eqref{eq vone htpy of kthry of ku}.

For the last statement, note that $T(2)_*\kth(ko) \cong T(2)_*\kth(ko_p)$ due to the purity of algebraic $\kth$-theory and \cite[Lemma 2.2 (\romannumeral 6)]{land2020purity}. It follows by  Theorem \ref{theo kthry splitting for ku}, that the map 
\[T(2)_*\kth(ku_p) \xrightarrow{\cdot b^i} T(2)_*\kth(ku_p)\]
carries $T(2)_*\kth(ku_p)_0$ to $T(2)_*\kth(ku_p)_i$  for $i<p-1$ where the map above multiplies by $b^i$. Using this fact, together with   \cite[Proposition 1.2 (b)]{ausoni2010kthryofcomplexkthry}, provides isomorphisms 
\[T(2)_*\kth(\ell_p) \cong T(2)_*\kth(ku_p)_0 \xrightarrow{\cong} T(2)_*\kth(ku_p)_i\]
given by $\cdot b^i$ for $i<p-1$. This, together with  \eqref{eq intermediate splitting of kthry of kop} provides the desired identification of  $T(2)_*\kth(ko)\cong T(2)_*\kth(ko_p)$ as $T(2)_* \kth(\ell_p)[b^2]/((b^2)^{(p-1)/2}+v_2)$.
\end{proof}

\section{Root adjunction and Lubin-Tate spectra}\label{sect kthry and redshift for LT spectra}

Recall that in  \cite{hahn2020redshift}, Hahn and Wilson prove that there are $E_3$ $MU$-algebra forms of $\bpn$. Furthermore, their constructions provide an $E_3$ $MU[\sigma_{2(p^n-1)}]$-algebra form of $\bpn$ where $\sigma_{2(p^n-1)}$ acts through $v_n$ \cite[Remark 2.1.2]{hahn2020redshift}. 

To relate particular forms of $\bpn$ to  Lubin-Tate spectra, we use the spherical Witt vectors constructed by Lurie \cite[Example 5.2.7]{lurie2018ellipticII}.  For a given discrete perfect $\fp$-algebra $B_0$, this provides an $E_\infty$-ring $\sph_{W(B_0)}$ that is flat over $\sph$ in the sense of \cite[Defnition 7.2.2.10]{lurie2016higher}.  Therefore, it follows by \cite[Proposition 7.2.2.13]{lurie2016higher} and \cite[Proposition 2.7]{mao2020perfectoid} that 
\begin{equation}\label{eq sph witt flatness}
    \pi_n (\sph_{W(B_0)} \wdg F) \cong W(B_0) \otimes \pi_n F.
\end{equation}
for every spectrum $F$. We would like to thank Jeremy Hahn for showing us the proof of the following proposition.
\begin{prop}\label{prop bpn with witt coefficients satisfy the redshift conjecture}
    Fix an $E_3$ $MU$-algebra form of $\bpn$. Then $\sph_{W(B_0)} \wdg \bpn$ satisfies the redshift conjecture for every discrete perfect $\fp$-algebra $B_0$.  
\end{prop}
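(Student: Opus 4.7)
The plan is to extend the Hahn--Wilson redshift argument for $\bpn$ to the coefficient system $W(B_0)$ by base change along the flat $E_\infty$-map $\sph \to \sph_{W(B_0)}$. First, by the flatness property \eqref{eq sph witt flatness}, the homotopy of $\sph_{W(B_0)}\wedge\bpn$ is $W(B_0)[v_1,\dots,v_n]$, so this $E_1$-ring is of chromatic height exactly $n$; combined with \cite[Purity Theorem]{land2020purity}, this reduces the redshift conjecture to showing $L_{T(n+1)}\kth(\sph_{W(B_0)}\wedge\bpn)\not\simeq 0$. Moreover, since the chosen $E_3$ $MU[\sigma_{2(p^n-1)}]$-algebra structure on $\bpn$ is preserved by base change along $\sph\to\sph_{W(B_0)}$, the structural hypotheses on which Hahn--Wilson's proof relies remain available for $\sph_{W(B_0)}\wedge\bpn$.

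Next, I would exploit the symmetric monoidality of $\thh$: since $\sph_{W(B_0)}$ is $E_\infty$, there is an equivalence of cyclotomic spectra
\[
\thh(\sph_{W(B_0)}\wedge\bpn)\;\simeq\;\thh(\sph_{W(B_0)})\wedge\thh(\bpn),
\]
and $\thh(\sph_{W(B_0)})\simeq\sph_{W(B_0)}$ after $p$-completion, because $\sph_p\to\sph_{W(B_0)}$ is an étale $E_\infty$-extension. It follows that $\thh$, and hence $TC$, of $\sph_{W(B_0)}\wedge\bpn$ is obtained from that of $\bpn$ by tensoring the relevant homotopy with $W(B_0)$ over $\zp$. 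The nontrivial $v_{n+1}$-self map that Hahn--Wilson exhibit on an appropriate reduction of $TC(\bpn)$ (e.g.\ modulo $(p,v_1,\dots,v_n)$) therefore extends to the same reduction of $TC(\sph_{W(B_0)}\wedge\bpn)$; by faithful flatness of $W(B_0)$ over $\zp$ (for $B_0\neq 0$), the extended class remains non-nilpotent. Passing back to $\kth$ via the cyclotomic trace and the Dundas--Goodwillie--McCarthy theorem, this yields the required nonvanishing of $L_{T(n+1)}\kth(\sph_{W(B_0)}\wedge\bpn)$.

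The main obstacle is verifying that the Hahn--Wilson detection argument is genuinely compatible with base change along $\sph_p\to\sph_{W(B_0)}$: one must check that the $v_{n+1}$-class they extract lives in a part of $TC(\bpn)$ that behaves well under $-\otimes_{\zp}W(B_0)$, and that non-nilpotence of the corresponding self-map is preserved. For $B_0$ a finite field this follows from a clean Galois descent argument, since $\sph_{W(\fpk)}$ is a faithful finite Galois extension of $\sph_p$; for a general perfect $\fp$-algebra $B_0$, I would write $W(B_0)$ as a filtered colimit of its finitely generated perfect subalgebras and invoke the fact that connective $\kth$-theory commutes with filtered colimits. Together, these ingredients reduce the general case to the finite-field case handled by the descent step.
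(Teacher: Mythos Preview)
Your outline has the right spirit---base-change along $\sph\to\sph_{W(B_0)}$ and import the Hahn--Wilson detection class---but it misidentifies where that class lives and, as a result, runs into a real gap. Hahn--Wilson do not produce a $v_{n+1}$-self map on $\tc(\bpn)$ or on absolute $\thh(\bpn)$; their argument takes place in $\thh^{MU}(\bpn)^{hS^1}$, and redshift for $\bpn$ is deduced from the $E_2$-ring map $\kth(\bpn)\to\thh^{MU}(\bpn)^{hS^1}$ together with the nonvanishing of the $T(n+1)$-localization of the target. Your proposal instead tries to show that $\tc$ of $\sph_{W(B_0)}\wedge\bpn$ is obtained from $\tc(\bpn)$ by tensoring with $W(B_0)$. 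This step is not justified: $\tc$ involves homotopy fixed points and Tate constructions, which do not commute with smashing by $\sph_{W(B_0)}$ when $W(B_0)$ is not finite over $\zp$. Your proposed rescue via filtered colimits of finitely generated perfect subalgebras does not close the gap either, since finitely generated perfect $\fp$-algebras are not finite fields in general, and in any case $\tc$ does not commute with filtered colimits. The invocation of Dundas--Goodwillie--McCarthy is also unnecessary: once you have a ring map from $\kth$ to a $T(n+1)$-locally nonzero target, you are done.

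The paper's proof avoids all of this by staying on the relative-THH side. One has an equivalence of $S^1$-equivariant $E_2$-algebras
\[
\sph_{W(B_0)}\wedge\thh^{MU}(\bpn)\;\simeq\;\thh^{\,\sph_{W(B_0)}\wedge MU}\bigl(\sph_{W(B_0)}\wedge\bpn\bigr),
\]
so by flatness the homotopy is $W(B_0)\otimes\thh^{MU}_*(\bpn)$, which is even and polynomial by \cite[Theorem~2.5.4]{hahn2020redshift}. The homotopy fixed point spectral sequence therefore degenerates, giving $\pi_*\bigl(\thh^{\,\sph_{W(B_0)}\wedge MU}(\sph_{W(B_0)}\wedge\bpn)^{hS^1}\bigr)\cong W(B_0)\otimes\thh^{MU}_*(\bpn)\llbracket t\rrbracket$, and the comparison map from $\thh^{MU}(\bpn)^{hS^1}$ shows that $v_{n+1}$ is detected by $t\sigma^2v_{n+1}$ exactly as in Hahn--Wilson. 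This makes the $T(n+1)$-localization of the target nonzero, and the $E_2$-ring map $\kth(\sph_{W(B_0)}\wedge\bpn)\to\thh^{\,\sph_{W(B_0)}\wedge MU}(\sph_{W(B_0)}\wedge\bpn)^{hS^1}$ finishes the argument. The point is that the base-change is performed \emph{before} taking $(-)^{hS^1}$, and the evenness of the relative THH lets you control that single homotopy limit directly via the spectral sequence; there is no need to control $\tc$ or to reduce to finite fields.
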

\begin{proof}
    There is an equivalence of $E_2$-algebras in $S^1$-equivariant spectra
    \[\sph_{W(B_0)}\wdg \thh^{MU}(\bpn)\simeq \thh^{\sph_{W(B_0)} \wdg MU}( \sph_{W(B_0)} \wdg \bpn).\]
    Therefore, it follows by  \eqref{eq sph witt flatness} that we have
    \begin{equation}\label{eq relative thh groups for spherical witt bpn}
    \thh^{\sph_{W(B_0)} \wdg MU}_*( \sph_{W(B_0)} \wdg \bpn) \cong W(B_0) \otimes \thh^{MU}_*(\bpn)
    \end{equation}
    and this is a polynomial algebra over $W(B_0)[v_1,...,v_n]$ due to \cite[Theorem 2.5.4]{hahn2020redshift} where one of the generators is denoted by $\sigma^2v_{n+1}$ . The rest of the argument follows as in the proofs of \cite[Theorems 2.5.4 and 5.0.1, Corollary 5.0.2]{hahn2020redshift} by considering 
    \begin{equation}\label{eq htpy of rel thh fixedpoint of witt bpn}
    \pis \big(\thh^{\sph_{W(B_0)} \wdg MU}( \sph_{W(B_0)} \wdg \bpn)^{hS^1}\big)
    \end{equation}
     instead of $\pis\big(\thh^{MU}(\bpn)^{hS^1}\big)$.

     Namely, we obtain from \cite[Theorem 2.5.4]{hahn2020redshift} that \eqref{eq relative thh groups for spherical witt bpn} is concentrated in even degrees. Therefore, the corresponding homotopy fixed point spectral sequence degenerates on the second page providing the $W(B_0)[v_1,...,v_n]$-algebra  \eqref{eq htpy of rel thh fixedpoint of witt bpn} as 
     \[W(B_0) \otimes \thh^{MU}_*(\bpn) \llbracket t \rrbracket\]
     since \eqref{eq relative thh groups for spherical witt bpn} is polynomial.
     Using the map from  $\pis \big(\thh^{MU}(\bpn)^{hS^1}\big )$ to \eqref{eq htpy of rel thh fixedpoint of witt bpn}, we deduce from \cite[Theorem 5.0.1]{hahn2020redshift} that $v_{n+1}$ in \eqref{eq htpy of rel thh fixedpoint of witt bpn} is represented by the class $t\sigma^2v_{n+1}$. 
     
     Considering the action of $v_0,...v_{n+1}$ on \eqref{eq htpy of rel thh fixedpoint of witt bpn} described above, one observes that 
     \[L_{T(n+1)}\thh^{\sph_{W(B_0)} \wdg MU}( \sph_{W(B_0)} \wdg \bpn)^{hS^1} \not \simeq *.\]
Using this and the $E_2$-map 
     \[L_{T(n+1)}\kth(\sph_{W(B_0)} \wdg \bpn) \to L_{T(n+1)}\thh^{\sph_{W(B_0)} \wdg MU}(\sph_{W(B_0)} \wdg \bpn)^{hS^1},\]
     we deduce that $L_{T(n+1)}\kth(\sph_{W(B_0)} \wdg \bpn) \not \simeq *$
     as desired.
\end{proof}

\begin{cons}\label{cons mrv e theory out of bpn}
    Fix an $E_3$ $MU[\sigma_{2(p^n-1)}]$-algebra form of $\bpn$ where $\sigma_{2(p^n-1)}$ acts through $v_n$ and let $k$ be a perfect field of characteristic $p$. Recall that in this situation, degree $p^n-1$-root adjunction to $v_n$ provides a $p^n-1$-graded $E_3$ $MU[\sigma_2]$-algebra, see Remark \ref{rema bpnadj root is ethree}. We consider the $E_3$ $MU$-algebra: 
    \[E := \big(L_{K(n)} (\sph_{W(k)} \wdg \bpn)\big)(\sqrt[p^n-1]{v_n}).\]
    It follows by \cite[Theorem 1.5.4]{hovey1997vnelementsbordism} and \cite[Theorem 1.9]{hovey1995bousfieldhopkins} that 
    \[\pis E \cong W(k)[\lv u_1,...,u_{n-1}\rv][u^{\pm}]\]
    where $\lv u_i \rv = 0$ and $\lv u \rv = -2$.
    Furthermore, the resulting  $E_3$-map $MU\to\bpn \to  E$ provides a formal group law over $\pis E$.
    
\end{cons}

For a given perfect $\fp$-algebra  $B_0$  and a height $n$ formal group law $\Gamma$ over $B_0$, we let $E_{(B_0,\Gamma)}$ denote the corresponding Lubin-Tate spectrum. By Lurie's generalization \cite[Section 5]{lurie2018ellipticII} of the  Goerss-Hopkins-Miller theorem \cite{rezk1998notes,goersshopkins2004modulispcs}, $E_{(B_0,\Gamma)}$ is an $E_\infty$-ring.

\begin{prop}\label{prop bpn adj root is lubin tate}
In the setting of Construction \ref{cons mrv e theory out of bpn},   $E$ is equivalent to   $E_{(k,\Gamma)}$ as an $E_1$-ring for some height $n$ formal group law $\Gamma$ over $k$.
\end{prop}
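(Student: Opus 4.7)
The plan is to identify $E$ with a Lubin-Tate spectrum by producing the relevant formal group data and invoking Goerss-Hopkins-Miller / Lurie's uniqueness theorem.  First, the $E_3$ $MU$-algebra structure on $E$ provides a complex orientation $MU \to E$ and hence a formal group law $F$ over
\[
\pi_0 E \;\cong\; W(k)\llbracket u_1, \ldots, u_{n-1}\rrbracket .
\]
Reducing along the quotient by the maximal ideal $\mathfrak{m} = (p, u_1,\ldots,u_{n-1})$ yields a formal group law $\Gamma$ over $k$. Because $E = L_{K(n)}(\sph_{W(k)}\wedge BP\langle n \rangle)(\sqrt[p^n-1]{v_n})$ has $v_n$ acting invertibly, the $p$-series of $\Gamma$ has leading term a unit multiple of $x^{p^n}$, so $\Gamma$ has exact height $n$. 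This $\Gamma$ is our candidate formal group.

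Next I would verify that $(\pi_0 E, F)$ is a universal deformation of $\Gamma$. By classical Lubin-Tate theory, the universal deformation is carried by the complete local ring $W(k)\llbracket t_1, \ldots, t_{n-1}\rrbracket$, and $F$ determines a morphism
\[
\phi\co W(k)\llbracket t_1, \ldots, t_{n-1}\rrbracket \longrightarrow \pi_0 E = W(k)\llbracket u_1, \ldots, u_{n-1}\rrbracket
\]
of complete local $W(k)$-algebras over $k$. To conclude that $\phi$ is an isomorphism it is enough to check that the induced map on cotangent spaces $\mathfrak{m}/\mathfrak{m}^2$ is an isomorphism. This uses the definition of the parameters $u_i$: by construction of the $E_3$ $MU[\sigma_{2(p^n-1)}]$-algebra form of $BP\langle n\rangle$ from \cite{hahn2020redshift} together with the root-adjunction procedure of Remark \ref{rema bpnadj root is ethree}, the class $u_i$ corresponds, up to a unit, to $v_i u^{p^i-1}$, and the $v_i$ are known to furnish Hazewinkel-type parameters whose images in $\mathfrak{m}/\mathfrak{m}^2$ of the Lubin-Tate deformation ring form a basis. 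Hence $\phi$ is an isomorphism and $F$ is a universal deformation of $\Gamma$.

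Finally, I would invoke Lurie's generalization of Goerss-Hopkins-Miller \cite[Section 5]{lurie2018ellipticII}: among $K(n)$-local even periodic $E_\infty$-rings, $E_{(k,\Gamma)}$ is characterized (up to contractible choice) by the pair $(k,\Gamma)$ together with the structure of universal deformation on its formal group. Even though $E$ is only known to be an $E_3$-ring, its underlying spectrum is $K(n)$-local, even periodic, Landweber exact (because $F$ is Landweber exact, being a universal deformation of a height $n$ formal group), and it realizes the universal deformation $F$ of $\Gamma$. Landweber exactness then produces an equivalence of $MU$-modules $E \simeq E_{(k,\Gamma)}$, and this can be upgraded to an equivalence of underlying $E_1$-rings by comparing the two as multiplicative Landweber-exact theories (or by appealing to the connectedness of the space of $A_\infty$-refinements of a Landweber-exact complex orientation on a $K(n)$-local even periodic spectrum).

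The main obstacle I anticipate is the cotangent-space computation in the second step: one must keep careful track, through the several layers of construction ($BP\langle n\rangle$ as an $MU[\sigma_{2(p^n-1)}]$-algebra, spherical Witt extension, $K(n)$-localization, and root adjunction), of how the generators $u_i$ relate to the Hazewinkel generators $v_i$ of $\pi_* BP$, and verify that after all these operations they still provide a regular system of parameters reducing correctly modulo $\mathfrak{m}^2$. Granting that, the identification with $E_{(k,\Gamma)}$ is essentially formal.
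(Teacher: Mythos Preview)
Your approach is correct and essentially the same as the paper's. The paper streamlines both steps: it reads off directly from the construction that $v_i \mapsto u_i u^{-(p^i-1)}$ for $0<i<n$ and $v_n \mapsto u^{-(p^n-1)}$, then cites a standard criterion (\cite[Lecture 21, Theorem 5]{lurie2010chromaticlectures} or \cite[Proposition 1.1]{lubintate1966formalmodulionparameter}) to conclude the formal group on $\pi_0 E$ is the universal deformation---bypassing the cotangent-space bookkeeping you flagged---and then invokes the Hopkins--Miller theorem \cite[Theorem 7.1]{rezk1998notes} directly for the $E_1$-equivalence, rather than routing through Landweber exactness and a separate upgrade.
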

\begin{proof}
    By construction, the map $\pis MU_{(p)} \to \pis E$ carries $v_i$ to $u_iu^{-(p^i-1)}$ for $0<i<n$ and $v_n$ to $u^{-(p^n-1)}$. Therefore, the corresponding formal group law on $\pi_0E$ is the universal deformation of the resulting height $n$ formal group law $\Gamma$ over $k$. This follows by \cite[Theorem 5 in Lecture 21]{lurie2010chromaticlectures}. Alternatively, one can directly check the conditions given in \cite[Proposition 1.1]{lubintate1966formalmodulionparameter}. It follows from Hopkins-Miller theorem that there is an equivalence of $E_1$-rings $E\simeq E_{(k,\Gamma)}$ \cite[Theorem 7.1]{rezk1998notes}.

    
\end{proof}

Burklund, Hahn, Levy and Schlank are going to  use the following example in  their  construction of a counterexample to the telescope conjecture.
\begin{exam}\label{exa for the telescope conjecture}
   Let $k$ be a perfect algebraic extension of $\fp$. 
    We know from \cite[Corollary 4.31]{ramzi2023separability} that the $E_1$-algebra structure on $E_{(k,\Gamma)}$ lifts to a unique $E_d$-algebra structure for every $1 \leq d\leq \infty$. Since $E$ in  Construction \ref{cons mrv e theory out of bpn} is an $E_3$-ring, it follows from Proposition \ref{prop bpn adj root is lubin tate} that there is an $E_3$-equivalence
   \[E\simeq E_{(k,\Gamma)}\]
   for $\Gamma$ as in Proposition \ref{prop bpn adj root is lubin tate}. Through this equivalence, we may equip $E_{(k,\Gamma)}$ with the structure of an $E_3$ $MU$-algebra. In particular, we obtain a map of $E_3$ $MU$-algebras
   \[\bpn \to E_{(k,\Gamma)}.\]
   Furthermore, $E_{(k,\Gamma)}$ is a $\z/(p^n-1)$-graded $E_3$ $MU[\sigma_2]$-algebra where the weight $1$ class $\sigma_2$ acts through $u^{-1}$. 
\end{exam}

\begin{theo}\label{theo kthry splitting from bpn to lubin tate}
    In the setting of Construction \ref{cons mrv e theory out of bpn} and Proposition \ref{prop bpn adj root is lubin tate}, the canonical map 
    \[L_{T(n+1)}\kth(\sph_{W(k)} \wdg \bpn) \to L_{T(n+1)}\kth(E_{(k,\Gamma)})\]
    is the inclusion of a non-trivial wedge summand. 
\end{theo}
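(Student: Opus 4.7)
The strategy is to combine three ingredients: the general splitting result for root adjunctions at the level of telescopically localized $K$-theory, the identification of $T(n+1)$-local $K$-theory of the root adjunction with that of $E_{(k,\Gamma)}$, and the redshift property of $\sph_{W(k)} \wdg \bpn$ for non-triviality.

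First, I would set $A := \sph_{W(k)} \wdg \bpn$, which is $p$-local. By Construction~\ref{cons mrv e theory out of bpn} and Remark~\ref{rema bpnadj root is ethree}, $A$ carries an $E_3$ $MU[\sigma_{2(p^n-1)}]$-algebra structure in which $\sigma_{2(p^n-1)}$ acts through $v_n$, so Hypothesis~\ref{hypo root adjunction} holds for the class $a = v_n \in \pi_{2(p^n-1)} A$ with $m = p^n-1$ and internal degree parameter $2$. Since $p \nmid (p^n-1)$ and $|v_n| > 0$, Corollary~\ref{corr nonconnective alg kthry inclusion for root adj} applied with $i = n+1$ yields that the canonical map
\[
L_{T(n+1)} \kth(A) \to L_{T(n+1)} \kth\big(A(\sqrt[p^n-1]{v_n})\big)
\]
is the inclusion of a wedge summand.

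Second, I would identify the target with $L_{T(n+1)} \kth(E_{(k,\Gamma)})$. The unit map
\[
A(\sqrt[p^n-1]{v_n}) \to L_{K(n)}\!\big(A(\sqrt[p^n-1]{v_n})\big) \simeq E_{(k,\Gamma)},
\]
where the equivalence on the right follows from Proposition~\ref{prop bpn adj root is lubin tate} (using that $K(n)$-localization commutes with the root adjunction up to $K(n)$-equivalence), has $K(n)$-acyclic fiber. Hence the Purity Theorem of~\cite{land2020purity} gives an equivalence
\[
L_{T(n+1)} \kth\big(A(\sqrt[p^n-1]{v_n})\big) \simeq L_{T(n+1)} \kth(E_{(k,\Gamma)}),
\]
as already recorded in the Lubin-Tate discussion of the introduction.

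Finally, to see that the wedge summand is non-trivial, I would invoke Proposition~\ref{prop bpn with witt coefficients satisfy the redshift conjecture}, which asserts $L_{T(n+1)} \kth(\sph_{W(k)} \wdg \bpn) \not\simeq *$; a non-zero wedge summand is in particular non-trivial. The main technical point is the purity identification in the second step: one has to check that passing between the $p$-local root adjunction and its $K(n)$-localization does not change $L_{T(n+1)} \kth$, which reduces to verifying that the fiber is $T(i)$-acyclic for all $i \geq n+1$. Apart from this point, the argument assembles known results from earlier in the paper.
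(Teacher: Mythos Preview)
Your argument uses exactly the same three ingredients as the paper---Corollary~\ref{corr nonconnective alg kthry inclusion for root adj}, the Purity Theorem, and Proposition~\ref{prop bpn with witt coefficients satisfy the redshift conjecture}---but in a slightly different order. The paper applies Corollary~\ref{corr nonconnective alg kthry inclusion for root adj} directly to $L_{K(n)}(\sph_{W(k)}\wdg\bpn)$, whose root adjunction is $E_{(k,\Gamma)}$ \emph{by definition} (Construction~\ref{cons mrv e theory out of bpn}), and then invokes purity for the map $\sph_{W(k)}\wdg\bpn \to L_{K(n)}(\sph_{W(k)}\wdg\bpn)$. This sidesteps your need to identify $L_{K(n)}\big(A(\sqrt[p^n-1]{v_n})\big)$ with $E_{(k,\Gamma)}$ as $E_1$-rings.

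One point to correct: the purity hypothesis you state is too weak. Neither ``$K(n)$-acyclic fiber'' nor ``$T(i)$-acyclic for all $i\geq n+1$'' suffices; to conclude an $L_{T(n+1)}\kth$-equivalence, the Purity Theorem requires the map to be a $T(n)\vee T(n+1)$-equivalence. This is exactly what the paper asserts for $\sph_{W(k)}\wdg\bpn \to L_{K(n)}(\sph_{W(k)}\wdg\bpn)$, and it then transfers to your map of root adjunctions since, by~\eqref{eq weight pieces of root adj}, that map is a finite wedge of suspensions of the former.
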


\begin{proof}
It follows by Corollary \ref{corr nonconnective alg kthry inclusion for root adj} and Proposition \ref{prop bpn adj root is lubin tate} that \[L_{T(n+1)}\kth\big (L_{K(n)}(\sph_{W(k)} \wdg \bpn)\big) \to L_{T(n+1)}\kth(E_{(k,\Gamma)})\]
is the inclusion of a wedge summand. Since 
\[\sph_{W(k)} \wdg \bpn \to L_{K(n)} (\sph_{W(k)} \wdg \bpn)\]
is a $T(n) \vee T(n+1)$-equivalence, the result follows by \cite[Purity Theorem]{land2020purity}.
\end{proof}

We finally prove the following theorem of Yuan.

\begin{theo}[\cite{yuan2021examples}]\label{theo lubin tate redshift}
For every perfect $\fp$-algebra $B_0$ and height $n$ formal group law $\Gamma$ over $B_0$, the Lubin-Tate spectrum $E_{(B_0,\Gamma)}$ satisfies the redshift conjecture. 
\end{theo}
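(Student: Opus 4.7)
The plan is to generalize the argument for perfect fields, implicit in the preceding results, to an arbitrary perfect $\fp$-algebra $B_0$, by checking that each key ingredient carries over essentially verbatim. First, Proposition \ref{prop bpn with witt coefficients satisfy the redshift conjecture} is already stated for every discrete perfect $\fp$-algebra, so $\sph_{W(B_0)} \wdg \bpn$ satisfies the redshift conjecture. Second, I would verify that Construction \ref{cons mrv e theory out of bpn} and Proposition \ref{prop bpn adj root is lubin tate} extend to this setting: base changing our $E_3$ $MU[\sigma_{2(p^n-1)}]$-algebra form of $\bpn$ along $\sph \to \sph_{W(B_0)}$, then $K(n)$-localizing and adjoining a $(p^n-1)$-th root of $v_n$, produces an $E_1$-ring $E$ with
\[\pis E \cong W(B_0)[\![u_1, \ldots, u_{n-1}]\!][u^\pm].\]
The induced $MU$-orientation equips $\pi_0 E$ with a formal group law that, by the standard Lubin--Tate universal deformation argument, is the universal deformation of its reduction $\Gamma_0$, a height $n$ formal group law over $B_0$. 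Lurie's generalization of Goerss--Hopkins--Miller then identifies $E \simeq E_{(B_0,\Gamma_0)}$ as $E_1$-rings.

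Third, Corollary \ref{corr nonconnective alg kthry inclusion for root adj} applies to the root adjunction used in defining $E$, giving that
\[L_{T(n+1)}\kth(\sph_{W(B_0)}\wdg \bpn) \to L_{T(n+1)}\kth(E_{(B_0,\Gamma_0)})\]
is the inclusion of a non-trivial wedge summand. Combined with the Purity theorem of \cite{land2020purity}, this establishes redshift for $E_{(B_0,\Gamma_0)}$.

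To handle an arbitrary height $n$ formal group law $\Gamma$ over $B_0$, rather than only those $\Gamma_0$ produced by our construction, I would argue that $\Gamma$ becomes isomorphic to some such $\Gamma_0$ after a suitable faithfully flat base change $B_0 \to B_0'$ (essentially because height $n$ formal groups become isomorphic after passing to an algebraically closed residue field), and that $E_{(B_0, \Gamma)}$ is $K(n)$-locally obtained from $E_{(B_0', \Gamma_0)}$ in a way compatible with our construction; non-triviality of $L_{T(n+1)}\kth$ then propagates back.

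The main obstacle will be this last reduction step: confirming that redshift descends along the requisite base change between distinct height $n$ formal group laws over $B_0$. The spectrum-level tools established earlier in the paper handle the universal-deformation case cleanly via root adjunction, so the remaining content is essentially moduli-theoretic and descent-theoretic, outside the main thrust of the paper's methods.
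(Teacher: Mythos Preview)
Your proposal contains the right ingredients but takes a longer route than the paper, and the ``main obstacle'' you identify is in fact avoided by the paper with a one-line observation.

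The paper's proof never extends Construction~\ref{cons mrv e theory out of bpn} or Proposition~\ref{prop bpn adj root is lubin tate} beyond the case of a perfect \emph{field}. Instead, given arbitrary $(B_0,\Gamma)$, it simply chooses an $E_\infty$-map $E_{(B_0,\Gamma)} \to E_{(k,\Gamma')}$ with $k$ an algebraically closed field of characteristic~$p$. This reduces the problem to $E_{(k,\Gamma')}$, because non-triviality of $L_{T(n+1)}\kth(-)$ propagates \emph{backward} along ring maps: the induced map $L_{T(n+1)}\kth(E_{(B_0,\Gamma)}) \to L_{T(n+1)}\kth(E_{(k,\Gamma')})$ is a map of ring spectra, so a non-trivial target forces a non-trivial source. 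Over the algebraically closed~$k$, Lazard's theorem says there is a \emph{unique} height~$n$ formal group law, so $\Gamma'$ is automatically the one produced by Construction~\ref{cons mrv e theory out of bpn}, and Proposition~\ref{prop bpn with witt coefficients satisfy the redshift conjecture} together with Theorem~\ref{theo kthry splitting from bpn to lubin tate} finish the argument.

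Compared with this, your Steps~1--4 (generalizing the root-adjunction identification of Lubin--Tate spectra to arbitrary perfect $\fp$-algebras) are unnecessary: once you are willing to map to a larger base in Step~5 anyway, you may as well map all the way to an algebraically closed field and invoke the results already established for perfect fields. Your descent-theoretic worry about ``propagating back'' between different~$\Gamma$ over~$B_0$ dissolves into the elementary ring-map observation above, together with uniqueness over algebraically closed fields.
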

\begin{proof}

 There is an $E_\infty$-map $E_{(B_0,\Gamma)} \to E_{(k,\Gamma')}$ for some algebraically closed field $k$ of characteristic $p$ and $\Gamma'$ is the corresponding height $n$ formal group law on $k$. Since there is an induced $E_\infty$-map $\kth(E_{(B_0,\Gamma)}) \to \kth(E_{(k,\Gamma')})$, it suffices to prove the redshift conjecture for $E_{(k,\Gamma')}$.  

 Since $k$ is algebraically closed, there is a unique formal group law of height $n$ over $k$ \cite[Theorem \RomanNumeralCaps{4}]{lazard1955surlegroupesfrmlprmtrs}. Using Proposition \ref{prop bpn adj root is lubin tate}, we deduce that $E_{(k,\Gamma')} \simeq E$ for $E$ as in Construction \ref{cons mrv e theory out of bpn}. Combining Proposition \ref{prop bpn with witt coefficients satisfy the redshift conjecture} and Theorem  \ref{theo kthry splitting from bpn to lubin tate} we obtain that $E_{(k,\Gamma')}$ satisfies the redshift conjecture as desired. 
\end{proof}

We remark that it should be possible to  prove the redshift conjecture for all $E_1$ $MU$-algebra forms of  $\bpn$ by constructing maps $\bpn \to E_{(k,\Gamma)}$ through root adjunction.

\section{Algebraic $\kth$-theory and THH of Morava $E$-theories}\label{sec algebraic kthry of morava e theories}
In this section, we work with a particular form of Lubin-Tate spectra. This is the Morava $E$-theory spectrum  $E_n$ and $E_n$ is central in the Ausoni-Rognes program for the computation of $\kth(\sph)$. When we say Morava $E$-theory $E_n$, we mean the Lubin-Tate spectrum corresponding to the height $n$ Honda formal group. This formal group is characterized by its $p$-series
$$
[p]_n(x) = x^{p^n},
$$
and admits a canonical form over $\fpn$, in the sense that all of its endomorphisms are defined over this field. In this section, we prove a  splitting  result 
for the algebraic $\kth$-theory of the Morava $E$-theory $E_n$ and the corresponding two periodic Morava $K$-theory. Furthermore,  we   show that the THH of $E_n$ may be obtain from the THH of the $K(n)$-localized Johnson-Wilson spectrum   through base change.  
\subsection{An identification of Morava \texorpdfstring{$E$}{E}-theory}
Here, we provide an alternate  description of $E_n$ in terms of its fixed points and spherical Witt vectors.  We have 
\[\pis E_n\cong W(\fpn)[\lv u_1, \dots,u_{n-1}\rv][u^\pm]\]
where $\lv u_i \rv = 0$ and $\lv u \rv = -2$. 

\begin{prop}\label{prop spherical witt to morava e-theory}
 The map 
 \[\pis \sphwpn \cong W(\fpn)\otimes_{\zp}\pis \sphp \to \pis E_n\]
obtained via the map $\pis \sphp \to \pis E_n$ and the canonical $\wfpn$-module structure on $\pis E_n$ lifts to a  map of $E_\infty$ $\sphp$-algebras
\[\sphwpn \to E_n.\]

\end{prop}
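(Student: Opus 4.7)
The plan is to invoke the universal property of Lurie's spherical Witt vectors. Recall from \cite[Example 5.2.7]{lurie2018ellipticII} that for a perfect $\fp$-algebra $B_0$, the $E_\infty$-ring $\sph_{W(B_0)}$ is characterized (among $p$-complete $E_\infty$ $\sphp$-algebras) by the property that $\pi_0 \sph_{W(B_0)} \cong W(B_0)$ together with the following corepresentability: for any $p$-complete $E_\infty$-ring $R$, the space of $E_\infty$-ring maps $\sph_{W(B_0)} \to R$ is discrete and naturally equivalent to the set of commutative ring maps $W(B_0) \to \pi_0 R$ (equivalently, to ring maps $B_0 \to \pi_0 R/p$, lifted by Witt vector functoriality). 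Since $\sphp$ itself is $\sph_{W(\fp)}$, this corepresentability refines to an equivalence in the $\infty$-category of $E_\infty$ $\sphp$-algebras, so any such lift is automatically a $\sphp$-algebra map.

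To apply this universal property to our situation, I first note that $E_n$ is $K(n)$-local, hence $p$-complete, so the corepresentability above applies with $R = E_n$. Next, the ring $\pi_0 E_n \cong W(\fpn)\llbracket u_1, \dots, u_{n-1} \rrbracket$ contains $W(\fpn)$ as a subring via the unit map, yielding a commutative ring map $W(\fpn) \to \pi_0 E_n$. By the universal property, this lifts uniquely to an $E_\infty$ $\sphp$-algebra map
\[
\sphwpn \to E_n .
\]

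Finally, I need to verify that this map realizes the specified map on homotopy. Since $\sphwpn$ is flat over $\sphp$ (as cited around \eqref{eq sph witt flatness}), one has $\pi_* \sphwpn \cong W(\fpn) \otimes_{\zp} \pi_* \sphp$, and the induced map on $\pi_*$ is $\pi_*(\sphp)$-linear. Its restriction to the $W(\fpn)$-summand in degree zero is the chosen inclusion $W(\fpn) \hookrightarrow \pi_0 E_n$ by construction, and its restriction to the $\pi_* \sphp$-summand is the unit map $\pi_* \sphp \to \pi_* E_n$. Hence on $\pi_*$ the constructed map agrees with the one described in the statement. I do not foresee a substantive obstacle here: the whole argument rests on Lurie's construction, and the main care is simply to check that the ring map $W(\fpn) \to \pi_0 E_n$ we feed into the universal property recovers the structure described in the proposition.
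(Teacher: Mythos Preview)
Your proof is correct and takes essentially the same approach as the paper: both invoke the universal property of Lurie's spherical Witt vectors from \cite[Example~5.2.7]{lurie2018ellipticII}. The only difference is cosmetic---the paper phrases the universal property via $\fp$-algebra maps $\fpn \to \pi_0(R)/p$ and maps into the connective cover $cE_n$ first (then does a short check that $\pi_0 f$ is the canonical inclusion), whereas your formulation via $\Hom(W(\fpn),\pi_0 R)$ makes that verification automatic; just be aware that applying the universal property to the nonconnective $E_n$ implicitly uses that $\sphwpn$ is connective and that $cE_n$ is $p$-complete.
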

\begin{proof}
This is a consequence of  Lurie's theory of thickenings of relatively perfect morphisms \cite[Section 5.2]{lurie2018ellipticII}.  
Indeed, $\sphp \to \sphwpn$ is an $\sphp$-thickening of  $H\fp \to H\fpn$ in the sense of \cite[Definition 5.2.1]{lurie2018ellipticII}, see \cite[Example 5.2.7]{lurie2018ellipticII}. 

In particular, 
this implies that the space of $E_\infty$ $\sphp$-algebra maps from $\sphwpn$ to the connective cover $cE_n$ of $E_n$ is given by the set of $\fp$-algebra maps 
\begin{equation}\label{eq hom for thickenings}
    \hom_{\fp\textup{-} \mathcal{A}\textup{lg}}(\fpn, \fpn[\lvert u_1, \dots, u_{n-1} \rv])
\end{equation}
where this correspondence is given by the functor $\pi_0(-)/p$.

Let $f \co \sphwpn \to cE_n$ be the map of $E_\infty$ $\sphp$-algebras corresponding to the  canonical $\fp$-algebra map in \eqref{eq hom for thickenings}; in particular,   $\pi_0 (f)/p$ is the  canonical map in \eqref{eq hom for thickenings}. We first show that $\pi_0 f$ is given by  the canonical inclusion 
\[W(\fpn) \to W(\fpn)[\lvert u_1, \dots, u_{n-1} \rv].\]

Since  $W(\fp)$ is the  ring of integers of the unique unramified extension $\mathbb{Q}_p[\mu_{p^n-1}]$ of $\mathbb{Q}_p$ of degree $d$, $W(\fpn)$ is generated as a $\zp$-algebra by a primitive $p^n-1$ root of the unit. Since the roots of the unit of $\pi_0 cE_n$ are all in the image of the canonical inclusion $W(\fpn) \to W(\fpn)[\lvert u_1, \dots, u_{n-1} \rv]$, one observes that the map $\pi_0 f$ has to factor through the canonical inclusion $W(\fpn) \to W(\fpn)[\lvert u_1, \dots, u_{n-1} \rv] $. Furthermore, there is a unique ring map $W(\fpn) \to W(\fpn)$ that lifts the identity map on $\fpn$. This shows that $\pi_0 f$ is given by the canonical inclusion $W(\fpn) \to  W(\fpn)[\lvert u_1, \dots, u_{n-1} \rv]$. Since $\pis f$ is a map of $\pis \sphp$-modules, it follows that the composition of $f$ with the map $c E_n \to E_n$ provides the map claimed in the proposition.

\end{proof}
Let $Gal$ denote the Galois group $\text{Gal}(\fpn,\fp)$. Due to  Goerss-Hopkins-Miller theorem, there is an action of $Gal$ on the $E_\infty$-algebra $E_n$ for which 
\[\pis E_n^{hGal} \cong \zp[\lv u_1 \dots , u_{n-1}\rv][u^\pm]\]
where the degrees of the generators are as in $\pis E_n$.

\begin{prop}\label{prop smash splitting for morava e theory}
 There is an equivalence of $E_\infty$-$\sphp$-algebras:
\[\sphwpn \wdg_{\sphp} E_n^{hGal} \simeq E_n.\]
\end{prop}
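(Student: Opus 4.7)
The plan is to construct a canonical $E_\infty$ $\sphp$-algebra map from the smash product to $E_n$, and then verify it is an equivalence by checking that it induces an isomorphism on homotopy groups via a Tor spectral sequence argument exploiting the flatness of $\sphwpn$ over $\sphp$.

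First I would construct the map. Proposition \ref{prop spherical witt to morava e-theory} provides an $E_\infty$ $\sphp$-algebra map $\sphwpn \to E_n$, and the Goerss-Hopkins-Miller $E_\infty$-structure on $E_n$ comes with an $E_\infty$ map $E_n^{hGal}\to E_n$ realizing the homotopy fixed points inclusion. Since the relative smash product over $\sphp$ is the coproduct in $E_\infty$ $\sphp$-algebras, these two maps induce a canonical $E_\infty$ $\sphp$-algebra map
\[
\phi\co \sphwpn \wdg_{\sphp} E_n^{hGal} \longrightarrow E_n.
\]

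Next I would show $\phi$ is an equivalence by computing homotopy. By Proposition 7.2.2.13 of \cite{lurie2016higher} together with \cite[Proposition 2.7]{mao2020perfectoid} (as recalled in \eqref{eq sph witt flatness}), $\sphwpn$ is flat over $\sphp$ and satisfies $\pi_* \sphwpn \cong W(\fpn) \otimes_{\zp}\pi_* \sphp$; since $W(\fpn)$ is a finitely generated free $\zp$-module, this makes $\pis \sphwpn$ flat over $\pis \sphp$. Consequently, the K\"unneth spectral sequence
\[
\on{Tor}^{\pis \sphp}_{s,t}(\pis \sphwpn, \pis E_n^{hGal}) \Longrightarrow \pi_{s+t}(\sphwpn \wdg_{\sphp} E_n^{hGal})
\]
collapses on the $E^2$-page to yield
\[
\pis (\sphwpn \wdg_{\sphp} E_n^{hGal}) \cong W(\fpn) \otimes_{\zp} \pis E_n^{hGal} \cong W(\fpn) \otimes_{\zp} \zp[\lv u_1,\dots,u_{n-1}\rv][u^\pm].
\]

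Then I would identify this last tensor product with $\pis E_n$. Because $W(\fpn)$ is free of finite rank $n$ over $\zp$, for any formal power series ring one has $W(\fpn) \otimes_{\zp} \zp[\lv u_1,\dots,u_{n-1}\rv] \cong W(\fpn)[\lv u_1,\dots,u_{n-1}\rv]$, so inverting $u$ yields $\pis E_n$. Moreover, tracing through the construction, $\pi_* \phi$ sends the generators of $W(\fpn)$ via the map of Proposition \ref{prop spherical witt to morava e-theory} and the $u_i, u^\pm$ via the inclusion $E_n^{hGal}\to E_n$, and these together generate $\pi_* E_n$ as a ring; comparing with the explicit isomorphism above shows $\pi_*\phi$ is the canonical inclusion, hence an isomorphism.

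The main obstacle, which is really only a bookkeeping subtlety, is justifying the identification $W(\fpn)\otimes_{\zp}\zp[\lv u_1,\dots,u_{n-1}\rv]\cong W(\fpn)[\lv u_1,\dots,u_{n-1}\rv]$ over a completed ring, and checking that the collapse of the Tor spectral sequence is genuine and multiplicative; both issues are handled by the fact that $W(\fpn)$ is free of finite rank over $\zp$, which keeps the tensor product well-behaved with respect to the $\mathfrak{m}$-adic topology of $\pis E_n^{hGal}$.
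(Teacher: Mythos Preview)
Your proposal is correct and follows essentially the same approach as the paper: construct the $E_\infty$ map using Proposition \ref{prop spherical witt to morava e-theory} together with $E_n^{hGal}\to E_n$ (the paper phrases this as the composite $\sphwpn \wdg_{\sphp} E_n^{hGal} \to E_n \wdgp E_n \to E_n$, which is the same map you describe via the coproduct universal property), then check it is an equivalence on homotopy using flatness of $\sphwpn$ over $\sphp$ and the fact that $W(\fpn)$ is free of finite rank over $\zp$ so that tensoring commutes with the relevant completion.
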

\begin{proof}
This equivalence is given by the following composite map of $E_\infty$ $\sphp$-algebras 
\begin{equation}\label{eq map from smash splitting of en to en}
\sphwpn \wdg_{\sphp} E_n^{hGal} \to E_n \wdgp E_n \to E_n
\end{equation}
where the first map is induced by the map provided by Proposition \ref{prop spherical witt to morava e-theory} and the second map is given by the multiplication map of $E_n$. Due to the flatness of $\sphwpn$, this map induces the canonical map
\begin{equation}\label{eq map of htpy groups from smash splitting of en to en}
    \wfpn \otimes_{\zp} \zp[\lv u_1, \dots,u_{n-1}\rv][u^\pm] \to W(\fpn)[\lv u_1, \dots,u_{n-1}\rv][u^\pm].
\end{equation}
at the level of homotopy groups \cite[7.2.2.13]{lurie2016higher}.
Since $\wfpn$ is a free $\zp$-module of finite rank,  the functor $\wfpn \otimes_{\zp} - $ is given by taking a $n$-fold product of $-$. In particular, the functor $\wfpn \otimes_{\zp} - $ commutes  with completions. This shows that \eqref{eq map of htpy groups from smash splitting of en to en} is an isomorphism as desired. 
\end{proof}
\subsection{Algebraic $\kth$-theory of Morava $E$-theories}

There is a finite subgroup $\mathbb{F}_{p^n}^\times$ of the Morava stabilizer group such that $K = \mathbb{F}_{p^n}^\times \rtimes Gal$ acts on the $E_\infty$-algebra $E_n$. Furthermore, 
\[E_n^{hK} \simeq \widehat{E(n)}\]
where $\widehat{E(n)}$ denotes the $K(n)$-localization of the Johnson-Wilson spectrum $E(n)$, see \cite[Section 5.4.7]{rognes2008galois}. We have 
\[\pis E(n) \cong \zpl[v_1,\dots,v_{n-1}][v_n^\pm] \textup{\ \ and \ }\pis \widehat{E(n)} \cong \zpl[v_1,\dots,v_{n-1}][v_n^\pm]_I^{\land}\]
where $I$ denotes the ideal $(p,v_1,\dots,v_{n-1})$.
Since $\widehat{E(n)}$ is given by $E_n^{hK}$, there is a  $\kth$-equivariant map of  $E_\infty$-algebras  $\widehat{E(n)} \to E_n$. In particular, this provides a map  
\[\widehat{E(n)} \to E_n^{hGal}\]
of $E_\infty$-algebras. This map carries $v_n$ to $u^{-(p^n-1)}$ and $v_i$ to $u_i u^{-(p^i-1)}$ for $0<i<n$.

For the following, we fix an $E_2$-map  $\sph[\sigma_{2(p^n-1)}] \to \widehat{E(n)}$ to adjoin roots. Recall from Remark \ref{rema e2rings root adj are xalgebras} that in this situation, $\widehat{E(n)}(\sqrt[p^n-1]{v_n})$ is an $\widehat{E(n)}$-algebra.

\begin{theo}\label{theo morava E theories as root adjunction}
There are equivalences of $E_1$ $\widehat{E(n)}$-algebras:
\begin{align*}
    E_n^{hGal} \simeq& \  \widehat{E(n)}(\sqrt[p^n-1]{v_n})  \\
     E_n \simeq&\  \sphwq \wdg_{\sphp}\widehat{E(n)}(\sqrt[p^n-1]{v_n})
\end{align*}
where the class $u^{-1}$ corresponds to $\sqrt[p^n-1]{v_n}$ at the level of  homotopy groups for both of these equivalences.

In particular, $E_n^{hGal}$ and $E_n$ are $p^n-1$-graded $E_1$ $\widehat{E(n)}$-algebras with 
\[(E_n^{hGal})_i \simeq \Sigma^{2i} \widehat{E(n)}\]
and 
\[(E_n)_i \simeq \Sigma^{2i} \sphwq \wdg_{\sphp}\widehat{E(n)}\]
for every $0\leq i<p^n-1$.

\end{theo}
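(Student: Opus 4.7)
The plan is to deduce both equivalences from (i) the \'etale rigidity theorem of Hesselholt--Pstragowski \cite[Theorem 1.10]{hesselholt2022dirac} applied to the $E_\infty$-ring $\widehat{E(n)}$, and (ii) Proposition \ref{prop smash splitting for morava e theory}, together with the general grading description of Construction \ref{cons adjroots}.

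By Remark \ref{rema e2rings root adj are xalgebras}, since $\widehat{E(n)}$ is $E_\infty$, the spectrum $\widehat{E(n)}(\sqrt[p^n-1]{v_n})$ is naturally an $E_1$ $\widehat{E(n)}$-algebra, with homotopy $\pis\widehat{E(n)}[z]/(z^{p^n-1}-v_n)$ by Construction \ref{cons adjroots}; note that $z$ is invertible, since $z\cdot z^{p^n-2} = v_n$ and $v_n$ is invertible in $\pis\widehat{E(n)}$. Under the inclusion $\widehat{E(n)}=E_n^{hK}\to E_n^{hGal}$, the elements $v_i$ map to $u_iu^{-(p^i-1)}$ for $0<i<n$ and $v_n\mapsto u^{-(p^n-1)}$. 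Setting $z=u^{-1}$, one has $u_i=v_iz^{p^i-1}$, so these are derivable from the $v_i$ and $z$; hence $\pis E_n^{hGal}\cong \pis\widehat{E(n)}[z]/(z^{p^n-1}-v_n)$ as $\pis\widehat{E(n)}$-algebras. This extension is \'etale, since its derivative $(p^n-1)z^{p^n-2}$ is a unit because $p\nmid p^n-1$ and $z$ is invertible.

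Both $\widehat{E(n)}(\sqrt[p^n-1]{v_n})$ and $E_n^{hGal}$ are therefore $E_1$ $\widehat{E(n)}$-algebras realizing the same \'etale extension of $\pis\widehat{E(n)}$. Applying \cite[Theorem 1.10]{hesselholt2022dirac}, the identity map at the level of $\pis$ lifts uniquely to a map of $E_1$ $\widehat{E(n)}$-algebras $\widehat{E(n)}(\sqrt[p^n-1]{v_n})\to E_n^{hGal}$, and this map is automatically an equivalence since it is a $\pis$-isomorphism. This establishes the first equivalence; composing with Proposition \ref{prop smash splitting for morava e theory} yields
\[
E_n \simeq \sphwpn\wdg_{\sphp}E_n^{hGal} \simeq \sphwpn\wdg_{\sphp}\widehat{E(n)}(\sqrt[p^n-1]{v_n}),
\]
which is the second equivalence.

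For the grading assertions, Construction \ref{cons adjroots} promotes $\widehat{E(n)}(\sqrt[p^n-1]{v_n})$ to a $(p^n-1)$-graded $E_1$ $\widehat{E(n)}$-algebra, and \eqref{eq weight pieces of root adj} (with $k=2$) identifies its weight-$i$ piece with $\Sigma^{2i}\widehat{E(n)}$. Viewing $\sphwpn$ as concentrated in weight $0$, the relative smash product $\sphwpn\wdg_{\sphp}(-)$ preserves the grading weightwise and yields $(E_n)_i\simeq \Sigma^{2i}(\sphwpn\wdg_{\sphp}\widehat{E(n)})$. The main obstacle is the verification that the hypotheses of the Hesselholt--Pstragowski rigidity result genuinely apply to $\widehat{E(n)}$ and that the $\pis$-level identifications (particularly deriving the $u_i$ from $v_i$ and $z$ to conclude $\pis E_n^{hGal}$ \emph{is} the described \'etale extension) are complete; once these are in place, the remaining steps reduce to composing with Proposition \ref{prop smash splitting for morava e theory} and reading off the weight pieces from Construction \ref{cons adjroots}.
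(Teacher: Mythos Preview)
Your proposal is correct and follows essentially the same approach as the paper's own proof: both arguments verify that $\pis\widehat{E(n)}\to\pis\widehat{E(n)}[z]/(z^{p^n-1}-v_n)$ is \'etale (using invertibility of $v_n$ and of $p^n-1$ in $\pis\widehat{E(n)}$), invoke \cite[Theorem 1.10]{hesselholt2022dirac} to obtain the equivalence with $E_n^{hGal}$, then apply Proposition \ref{prop smash splitting for morava e theory} and read off the grading from Construction \ref{cons adjroots}. Your additional care in explicitly recovering the $u_i$ from $v_i$ and $z$ to confirm the $\pis$-isomorphism is a nice touch but does not deviate from the paper's strategy.
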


\begin{proof}
By inspection, one observes that 
\[\pis (\widehat{E(n)}(\sqrt[p^n-1]{v_n}))\cong \pis E_n^{hGal},\]
see \cite[5.4.9]{rognes2008galois}. Furthermore, the map of rings, 
\[\pis \widehat{E(n)} \to \pis (\widehat{E(n)}(\sqrt[p^n-1]{v_n}))\cong (\pis \widehat{E(n)})[z]/(z^{p^n-1}-v_n)\]
is a map of \'etale rings as $v_n$ and $p^n-1$ are invertible in $\pis \widehat{E(n)}$. Through \cite[Theorem 1.10]{hesselholt2022dirac}, we obtain the first equivalence in the theorem. The second equivalence follows by the first equivalence and Proposition \ref{prop smash splitting for morava e theory}. The statement on graded ring structures follows by the fact that root adjunction results in $m$-graded ring spectra, see Construction \ref{cons adjroots}.
\end{proof}

We are ready to prove our  result on the $\kth$-theory of Morava $E$-theories. For this, we use the following composite map 
\begin{equation}\label{eq map from jw to mrve}
    E(n) \to \widehat{E(n)} \to \widehat{E(n)}(\sqrt[p^n-1]{v_n}) \xrightarrow{\simeq} E_n^{hGal}
\end{equation}
of $E_1$-rings where the last map above is given by Theorem \ref{theo morava E theories as root adjunction}. Using Proposition \ref{prop smash splitting for morava e theory}, we obtain the following composite: 
\begin{equation}\label{eq map from witt jw to mrve}
\sphwpn \wdg E(n) \to \sphwpn \wdg \widehat{E(n)} \to \sphwpn \wdg E_n^{hGal} \to \sphwpn \wdg_{\sphp} E_n^{hGal} \xrightarrow{\simeq} E_n.
\end{equation}

\begin{theo}\label{theo K theory of Morava E theories}
The maps
\begin{align*}
\kth(E(n)) \to&\  K(E_n^{hGal})\\
\kth(\sphwpn \wdg E(n)) \to & \  \kth(E_n)
\end{align*}
induced by those above are inclusions of wedge summands after $T(n+1)$-localization. 
\end{theo}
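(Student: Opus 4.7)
The plan is to decompose each map into two stages: a root-adjunction stage, handled by Corollary \ref{corr nonconnective alg kthry inclusion for root adj}, and a $K(n)$-localization stage, handled by the purity theorem of \cite{land2020purity}. This mirrors the strategy used in the proof of Theorem \ref{theo kthry splitting from bpn to lubin tate}.

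For the first map, I would factor \eqref{eq map from jw to mrve} as
\[
E(n) \xrightarrow{j} \widehat{E(n)} \xrightarrow{r} \widehat{E(n)}(\sqrt[p^n-1]{v_n}) \simeq E_n^{hGal},
\]
using Theorem \ref{theo morava E theories as root adjunction} for the final identification. The map $r$ is a root adjunction to the class $v_n \in \pi_{2(p^n-1)} \widehat{E(n)}$, with $m = p^n-1$ coprime to $p$ and $k = 2 > 0$. Since $\widehat{E(n)}$ is $p$-complete, Corollary \ref{corr nonconnective alg kthry inclusion for root adj} directly produces the wedge-summand inclusion $L_{T(n+1)}\kth(\widehat{E(n)}) \to L_{T(n+1)}\kth(E_n^{hGal})$. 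For the $K(n)$-localization map $j$, the fiber $\fib(j)$ is $K(n)$-acyclic by construction; because $E(n)$ and $\widehat{E(n)}$ both have chromatic height exactly $n$, this forces $\fib(j)$ to be $T(n+1)$-acyclic, so \cite[Purity Theorem]{land2020purity} shows that $L_{T(n+1)}\kth(j)$ is an equivalence. Composing yields the first claim.

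For the second map, I would write \eqref{eq map from witt jw to mrve} as
\[
\sphwpn \wdg E(n) \xrightarrow{\id \wdg j} \sphwpn \wdg \widehat{E(n)} \xrightarrow{r'} \sphwpn \wdg_{\sphp} \widehat{E(n)}(\sqrt[p^n-1]{v_n}) \simeq E_n.
\]
Because base change along $\sphwpn \wdg_{\sphp} (-)$ commutes with the two-sided bar construction computing relative smash products, the map $r'$ is identified with the root adjunction applied to the $\sphp[\sigma_{2(p^n-1)}]$-algebra $\sphwpn \wdg \widehat{E(n)}$, so that Corollary \ref{corr nonconnective alg kthry inclusion for root adj} again gives a wedge-summand inclusion on $L_{T(n+1)}\kth$. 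The fiber of $\id \wdg j$ is $\sphwpn \wdg \fib(j)$, still $T(n+1)$-acyclic, so purity once more produces an equivalence on $L_{T(n+1)}\kth$, completing the argument by composition.

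The main technical hurdle will be verifying that $\fib(j)$ (and hence $\sphwpn \wdg \fib(j)$) is $T(n+1)$-acyclic in a form that lets the purity theorem apply. The analogous statement for $\bpn$ was handled in Theorem \ref{theo kthry splitting from bpn to lubin tate} via an explicit chromatic computation; here, the key input is that $E(n)$ and $L_{K(n)}E(n)$ have the same Morava $K(n)$-homology and both sit at height $n$, so their difference is concentrated at heights $\leq n$ and thus dies after smashing with any height $n+1$ telescope.
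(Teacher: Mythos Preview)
Your overall strategy matches the paper's exactly: factor each map as a $K(n)$-localization step followed by a root-adjunction step, apply purity to the first and Corollary~\ref{corr nonconnective alg kthry inclusion for root adj} to the second. The paper's proof is essentially a one-paragraph execution of precisely this plan.

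There is, however, a genuine gap in your invocation of purity. You argue that $\fib(j)$ is $T(n+1)$-acyclic and conclude that $L_{T(n+1)}\kth(j)$ is an equivalence. But the purity theorem does \emph{not} say that $T(n+1)$-equivalences of rings induce $T(n+1)$-equivalences on $K$-theory; if it did, redshift would be impossible, since every height-$n$ ring is $T(n+1)$-acyclic. What purity actually requires is that $j$ be a $T(n)\vee T(n+1)$-equivalence. The $T(n+1)$ part is automatic for the reason you give, but the $T(n)$ part needs its own argument: smashing $j\colon E(n)\to \widehat{E(n)}$ with a finite type-$n$ complex $F(n)$ already kills $p,v_1,\dots,v_{n-1}$, so $I_n$-completion becomes an equivalence and $F(n)\wedge j$ (hence $T(n)\wedge j$) is an equivalence. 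The paper simply asserts that $j$ is a $T(n)\vee T(n+1)$-equivalence and moves on; your proposal states the weaker, insufficient hypothesis.

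A smaller point: for the second map you collapse the passage from $\sphwpn\wedge(-)$ to $\sphwpn\wedge_{\sphp}(-)$ into your root-adjunction step $r'$. The paper treats this as a separate map in \eqref{eq map from witt jw to mrve} and notes it too is a $T(n)\vee T(n+1)$-equivalence, so that purity handles it alongside $\id\wedge j$. Your identification of $r'$ with a single root adjunction on $\sphwpn\wedge\widehat{E(n)}$ is correct up to this extra $p$-completion map, but you should account for it explicitly.
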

\begin{proof}
The first map in \eqref{eq map from jw to mrve} is a $T(n) \vee T(n+1)$-equivalence and hence induces a $T(n+1)$-equivalence in algebraic $K$-theory \cite[Purity Theorem]{land2020purity}. Therefore, the first equivalence in the theorem follows by applying Corollary \ref{corr nonconnective alg kthry inclusion for root adj} to $\widehat{E(n)}(\sqrt[p^n-1]{v_n})$.

Similarly, the first and the third maps in \eqref{eq map from witt jw to mrve} induce $T(n+1)$-equivalences in algebraic $K$-theory. The second equivalence in the theorem follows by observing that there is an equivalence of $E_1$-algebras:
\[\sphwpn \wdg \big(\widehat{E(n)}(\sqrt[p^n-1]{v_n})\big) \simeq \big(\sphwpn \wdg \widehat{E(n)}\big )(\sqrt[p^n-1]{v_n})\]
and then applying Corollary \ref{corr nonconnective alg kthry inclusion for root adj}.
\end{proof}
\subsection{Two-periodic Morava $K$-theories}\label{sec two periodic morava ktheory}
 We obtain analogous results for two-periodic Morava K-theories. 
 Taking a quotient with respect to a regular sequence in $\pi_* \sphwpn \wdgp \widehat{E(n)}$, one obtains an $\widehat{E(n)}$-algebra $K(n)$ \cite{lazarev2003towersofmualgebras,angeltveit2008thhofainfty,hahn2018quotients}. Here, $K(n)$ is the Morava $K$-theory spectrum with coefficients in $\fpn$. We have 
 \[\pis K(n) \cong \fpn[v_n^{\pm}].\]
 Using the $\widehat{E(n)}$-algebra structure on $K(n)$, we adjoin roots and define the two periodic Morava $K$-theory as follows 
 \[K_n := K(n)(\sqrt[p^n-1]{v_n}).\]
 In this case, 
 \[\pis K_n \cong \fpn[u^{\pm}]\]
 where $\lv u \rv = -2$.
Together with Theorem \ref{theo morava E theories as root adjunction}, this provides a commuting diagram of $E_1$-rings 
 \begin{equation*}
     \begin{tikzcd}
         \sphwpn \wdgp \widehat{E(n)} \ar[r]\ar[d] & E_n \ar[d]\\
         K(n) \ar[r] & K_n
     \end{tikzcd}
 \end{equation*}
 which justifies our definition of $K_n$.
 In particular, $K_n$ is a $p^n-1$-graded $E_1$-ring in a non-trivial way and we obtain the following from Corollary \ref{corr nonconnective alg kthry inclusion for root adj}.

 \begin{theo}\label{theo two periodic mrv ktheory alg kthry inclusion}
 The following map 
 \[\kth(K(n)) \to K(K_n)\]
     is the inclusion of a wedge summand after $T(i)$-localization for every $i\geq 2$.
 \end{theo}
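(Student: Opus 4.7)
The strategy is to recognize that this theorem follows by direct application of Corollary \ref{corr nonconnective alg kthry inclusion for root adj} to the root adjunction $K(n) \to K(n)(\sqrt[p^n-1]{v_n}) = K_n$, so the plan amounts to verifying the three hypotheses of that Corollary in the present setting.

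First I would identify the parameters: take $A = K(n)$, $a = v_n \in \pi_{2(p^n-1)} K(n)$, $m = p^n - 1$, and $k = 2$, so that $mk = 2(p^n-1)$ matches the degree of $v_n$. The numerical hypotheses are then immediate: $k = 2 > 0$, and $p \nmid m$ since $p^n - 1 \equiv -1 \pmod{p}$. Moreover $K(n)$ is $p$-local, being a Morava $K$-theory spectrum.

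Next I would verify Hypothesis \ref{hypo root adjunction}, which requires a chosen $\sph[\sigma_{2(p^n-1)}]$-algebra structure on $K(n)$ for which the unit sends $\sigma_{2(p^n-1)}$ to $v_n$. For this I invoke Proposition \ref{prop hypothesis}(2) applied to $R = \widehat{E(n)}$: this $R$ is an $E_\infty$-ring whose homotopy $\zpl[v_1,\dots,v_{n-1}][v_n^{\pm}]_I^{\wedge}$ is concentrated in even degrees, and the $\widehat{E(n)}$-algebra structure on $K(n)$ (used in Section \ref{sec two periodic morava ktheory} to define $K(n)$ as a quotient by a regular sequence) realizes $v_n \in \pi_*K(n)$ as the image of $v_n \in \pi_* \widehat{E(n)}$. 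Concretely, applying Proposition \ref{prop map from free spherical to even spectra} produces a graded $E_2$-ring map $\sph[\sigma_{2(p^n-1)}] \to \widehat{E(n)}$ carrying $\sigma_{2(p^n-1)}$ to $v_n$, and restricting scalars through this map along $\widehat{E(n)} \to K(n)$ equips $K(n)$ with the required $\sph[\sigma_{2(p^n-1)}]$-algebra structure. It must also be observed that the resulting $K(n)(\sqrt[p^n-1]{v_n})$ formed by Construction \ref{cons adjroots} agrees with the $K_n$ defined in Section \ref{sec two periodic morava ktheory}, which is a routine unwinding of definitions.

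With these hypotheses in place, Corollary \ref{corr nonconnective alg kthry inclusion for root adj} applies verbatim and yields that the map $L_{T(i)}\kth(K(n)) \to L_{T(i)}\kth(K_n)$ is the inclusion of a wedge summand for every $i \geq 2$, which is the claim. There is essentially no obstacle here beyond being careful about the $\sph[\sigma_{2(p^n-1)}]$-algebra structure: the only subtle point is ensuring that the structure used to build $K_n$ via Construction \ref{cons adjroots} coincides with the one obtained in Section \ref{sec two periodic morava ktheory} using the $\widehat{E(n)}$-algebra structure on $K(n)$, but this is guaranteed because both constructions arise from the same $E_2$-map $\sph[\sigma_{2(p^n-1)}] \to \widehat{E(n)}$ fixed at the start.
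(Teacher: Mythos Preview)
Your proposal is correct and takes essentially the same approach as the paper: the theorem is stated as an immediate consequence of Corollary~\ref{corr nonconnective alg kthry inclusion for root adj} applied to the root adjunction $K(n) \to K(n)(\sqrt[p^n-1]{v_n}) = K_n$, using the $\sph[\sigma_{2(p^n-1)}]$-algebra structure on $K(n)$ inherited from the fixed $E_2$-map $\sph[\sigma_{2(p^n-1)}] \to \widehat{E(n)}$. Your verification of the hypotheses is in fact more detailed than what the paper provides.
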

 \begin{corr}\label{corr redshift for two periodic mrv kthry}
      If $K(n)$ satisfies the redshift conjecture, then so does $K_n$.
 \end{corr}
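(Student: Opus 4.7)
The plan is to derive Corollary \ref{corr redshift for two periodic mrv kthry} directly from Theorem \ref{theo two periodic mrv ktheory alg kthry inclusion}, with a small verification that $K_n$ has the expected chromatic height. First I would check that $K_n$ is itself of height $n$. Since $K_n \simeq K(n)(\sqrt[p^n-1]{v_n})$ is built via root adjunction, the weight-piece decomposition from \eqref{eq weight pieces of root adj} shows that, as a spectrum, $K_n \simeq \bigvee_{0 \leq i < p^n-1} \Sigma^{2i} K(n)$. Therefore $L_{T(m)}K_n \simeq 0$ for $m > n$ and $L_{T(n)}K_n \not\simeq 0$, so $K_n$ has height $n$.

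Next, by the Purity Theorem of \cite{land2020purity}, $\kth(K_n)$ is of height at most $n+1$, i.e.\ $L_{T(m)}\kth(K_n)\simeq 0$ for $m > n+1$. Hence to establish redshift for $K_n$, it suffices to prove $L_{T(n+1)}\kth(K_n) \not\simeq 0$.

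Now I would invoke Theorem \ref{theo two periodic mrv ktheory alg kthry inclusion} with $i=n+1$: the canonical map
\[
\kth(K(n)) \to \kth(K_n)
\]
becomes the inclusion of a wedge summand after $T(n+1)$-localization. Under the hypothesis that $K(n)$ satisfies the redshift conjecture, we have $L_{T(n+1)}\kth(K(n)) \not\simeq 0$; since this spectrum is a wedge summand of $L_{T(n+1)}\kth(K_n)$, we conclude that $L_{T(n+1)}\kth(K_n)\not\simeq 0$, which together with the previous paragraph gives exactly the redshift property for $K_n$.

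No step is particularly hard here — the entire content is already packaged in Theorem \ref{theo two periodic mrv ktheory alg kthry inclusion}, which in turn comes from Corollary \ref{corr nonconnective alg kthry inclusion for root adj}. The only subtlety worth flagging is the verification that $K_n$ genuinely has height $n$ (so that ``redshift'' is the correct statement to prove); this is immediate from the wedge decomposition afforded by root adjunction.
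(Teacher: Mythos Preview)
Your proposal is correct and matches the paper's intended argument: the corollary is stated without proof in the paper, as an immediate consequence of Theorem \ref{theo two periodic mrv ktheory alg kthry inclusion} (equivalently, of Corollary \ref{corr nonconnective alg kthry inclusion for root adj}), and your write-up spells out exactly that deduction, including the routine check that $K_n$ has height $n$.
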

 The $V(1)$-homotopy of $\kth(k(1))$ is computed by  Ausoni and Rognes in \cite{ausoni2021kthrymrvKthry} for $p>3$. In particular, their computation shows that $K(1)$ satisfies the redshift conjecture. We obtain the following. 
 \begin{corr}\label{corr first two perdc mrv ktrhy redshift}
     The two periodic Morava $K$-theory $K_1$ of height one satisfies the redshift conjecture for $p>3$.
 \end{corr}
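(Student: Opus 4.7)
The plan is very short, since this corollary is designed to be an immediate consequence of two earlier results. The strategy is to recognize $K_1$ as a root adjunction $K(1)(\sqrt[p-1]{v_1})$ and then invoke Corollary \ref{corr redshift for two periodic mrv kthry}.

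First I would recall the definition: by construction, $K_1 := K(1)(\sqrt[p-1]{v_1})$, where $K(1)$ denotes the Morava $K$-theory with coefficients in $\mathbb{F}_p$. Note that Hypothesis \ref{hypo root adjunction} is satisfied for $K(1)$ with $a = v_1 \in \pi_{2(p-1)} K(1)$, and $m = p-1$ is coprime to $p$, so the hypotheses of Corollary \ref{corr redshift for two periodic mrv kthry} are met. That corollary asserts that, under these assumptions, if $K(1)$ satisfies the redshift conjecture then $K_1 = K(1)(\sqrt[p-1]{v_1})$ also satisfies the redshift conjecture.

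Next I would appeal to the computation of Ausoni--Rognes in \cite{ausoni2021kthrymrvKthry}, which gives a full description of $V(1)_*\kth(k(1))$ for $p>3$ and in particular shows that $L_{T(2)}\kth(k(1))\not\simeq 0$, i.e.\ $k(1)$ satisfies the redshift conjecture. Since the connective-to-periodic map $k(1)\to K(1)=k(1)[v_1^{-1}]$ has bounded-above fiber, it is a $T(i)$-equivalence for every $i\geq 1$, and hence by \cite[Purity Theorem]{land2020purity} induces a $T(2)$-equivalence $L_{T(2)}\kth(k(1))\simeq L_{T(2)}\kth(K(1))$. Thus $K(1)$ satisfies the redshift conjecture as well.

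Combining these two inputs yields the claim: $K_1$ satisfies the redshift conjecture for $p>3$. There is no substantial obstacle here; the only point that requires a moment of care is confirming that the conventions of \cite{ausoni2021kthrymrvKthry} for $k(1)$ match the height-one Morava $K$-theory $K(1)$ used here (with $\mathbb{F}_p$-coefficients rather than any larger residue field), which is why one needs the small purity argument above to pass from the connective $k(1)$ to the periodic $K(1)$ before applying Corollary \ref{corr redshift for two periodic mrv kthry}.
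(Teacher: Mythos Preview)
Your proposal is correct and follows essentially the same approach as the paper: invoke the Ausoni--Rognes computation to see that $K(1)$ satisfies redshift for $p>3$, and then apply Corollary~\ref{corr redshift for two periodic mrv kthry}. The only difference is that you make the purity step from $k(1)$ to $K(1)$ explicit, whereas the paper absorbs it into the sentence ``their computation shows that $K(1)$ satisfies the redshift conjecture.''
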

 
\subsection{THH descent for Morava $E$-theories}
Theorem \ref{theo cofiber seq for thh to log thh}  identifies THH of various periodic ring  spectra with their logarithmic THH. For instance, the  Morava $E$-theory spectrum $E_n$ is periodic with a unit $u$ in degree $-2$. Since $E_n/(u^{-1}) \simeq 0$, the canonical map  
\begin{equation*}
    \thh(E_n) \xrightarrow{\simeq} \thh(E_n \mid u^{-1})
\end{equation*}
is an equivalence. Using this, together with our result on logarithmic THH-\'etaleness of root adjunction, we show that $\thh(E_n)$ may be obtained from $\thh(\widehat{E(n)})$ via base-change up to $p$-completion. Such base-change formulas and their relationship with Galois descent problems for THH  were studied by Mathew in \cite{mathew2017thhbasechng}. 

\begin{theo}\label{theo thh of galois fixed points}
The canonical map: 
\[\thh(\widehat{E(n)}) \wdg_{\widehat{E(n)}} E_n^{hGal} \xrightarrow{\simeq} \thh(E_n^{hGal}), \]
is an equivalence. 
\end{theo}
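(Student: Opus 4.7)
The plan is to identify $E_n^{hGal}$ as a root adjunction via Theorem \ref{theo morava E theories as root adjunction} and then combine the THH splitting result for root adjunctions with the vanishing of $\thh(\widehat{E(n)}/v_n)$. Concretely, $E_n^{hGal} \simeq \widehat{E(n)}(\sqrt[p^n-1]{v_n})$, and Hypothesis \ref{hypo root adjunction} is satisfied for $A = \widehat{E(n)}$, $a = v_n$, $m = p^n-1$ and $k = 2$; moreover $\widehat{E(n)}$ is $p$-local and $p \nmid m$, so all the hypotheses of our earlier splitting theorems apply.

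First, I would invoke Theorem \ref{theo thh after root adjunction} to obtain the splitting
\[
\thh(E_n^{hGal}) \simeq \thh(\widehat{E(n)}) \vee \bigvee_{0<i<p^n-1}\Sigma^{2i}\thh(\widehat{E(n)}\mid v_n).
\]
The crucial input is that $v_n$ acts invertibly on $\pi_*\widehat{E(n)}$, so the cofibre $\widehat{E(n)}/v_n$ of multiplication by $v_n$ is contractible, and hence $\thh(\widehat{E(n)}/v_n) \simeq 0$. Theorem \ref{theo cofiber seq for thh to log thh} then forces the natural map $\thh(\widehat{E(n)}) \to \thh(\widehat{E(n)}\mid v_n)$ to be an equivalence, so that
\[
\thh(E_n^{hGal}) \simeq \bigvee_{0\leq i < p^n-1}\Sigma^{2i}\thh(\widehat{E(n)}).
\]

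Next, I would compute the source of the canonical base-change map. By \eqref{eq weight pieces of root adj}, as an underlying $\widehat{E(n)}$-module $E_n^{hGal}$ splits as $\bigvee_{0\leq i<p^n-1}\Sigma^{2i}\widehat{E(n)}$, whence
\[
\thh(\widehat{E(n)})\wdg_{\widehat{E(n)}} E_n^{hGal} \simeq \bigvee_{0\leq i<p^n-1}\Sigma^{2i}\thh(\widehat{E(n)}),
\]
matching the previous calculation abstractly.

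The main obstacle will be verifying that the canonical map really realises this equivalence, rather than merely that the two sides coincide up to abstract isomorphism. To resolve this, I would check the map weight-by-weight with respect to the $\mathbb{Z}/(p^n-1)$-grading on $E_n^{hGal}$ furnished by Construction \ref{cons adjroots}: on weight $0$, the map is the equivalence of Theorem \ref{theo weight zero of THH after root adjunction}; on weight $i > 0$, the naturality of the weight splitting in Theorem \ref{theo thh after root adjunction} together with the graded structure on log THH from Construction \ref{cons graded structure on log thh} identifies the map with $\Sigma^{2i}$ of $\thh(\widehat{E(n)}) \to \thh(\widehat{E(n)}\mid v_n)$, which is an equivalence by the vanishing argument above. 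Assembling these weight-wise equivalences yields the desired global equivalence.
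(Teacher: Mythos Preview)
Your proposal is correct and follows essentially the same strategy as the paper: identify $E_n^{hGal}$ as the root adjunction $\widehat{E(n)}(\sqrt[p^n-1]{v_n})$ via Theorem~\ref{theo morava E theories as root adjunction}, use that $v_n$ is a unit so $\widehat{E(n)}/v_n\simeq 0$ and hence $\thh(\widehat{E(n)})\simeq\thh(\widehat{E(n)}\mid v_n)$ by Theorem~\ref{theo cofiber seq for thh to log thh}, and then invoke the log-THH machinery for root adjunction.

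The only organizational difference is that the paper applies Theorem~\ref{theo root adj is log thh etale } directly: after rewriting the source as $\thh(\widehat{E(n)})\wdg_{\sphpl[\sigma_{2(p^n-1)}]}\sphpl[\sigma_2]$ (using $E_n^{hGal}\simeq\widehat{E(n)}\wdg_{\sphpl[\sigma_{2(p^n-1)}]}\sphpl[\sigma_2]$), log-THH \'etaleness together with $\thh\simeq\thh(-\mid -)$ on both sides gives the equivalence in one step, without needing to reassemble from weight pieces. Your route through Theorem~\ref{theo thh after root adjunction} and a weight-by-weight check is valid but slightly less direct, since that theorem is itself deduced from Theorem~\ref{theo root adj is log thh etale }; the paper's packaging sidesteps the naturality verification you flag as the ``main obstacle.''
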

\begin{proof}
Recall from Theorem \ref{theo morava E theories as root adjunction} that  there is an equivalence of $\widehat{E(n)}$-algebras 
\[E_n^{hGal} \simeq \widehat{E(n)}(\sqrt[p^n-1]{v_n}). \]
Therefore, it follows by Construction \ref{cons adjroots} that
\begin{equation}\label{eq  thh basechange of jw spectrum}
\begin{split}
    \thh(\widehat{E(n)})\wdg_{\widehat{E(n)}} E_n^{hGal} &\simeq \thh(\widehat{E(n)})\wdg_{\widehat{E(n)}} \widehat{E(n)} \wdg_{\sphpl[\sigma_{2(p^n-1)}]} \sphpl[\sigma_2] \\ &\simeq \thh(\widehat{E(n)}) \wdg_{\sphpl[\sigma_{2(p^n-1)}]} \sphpl[\sigma_2].
\end{split}
\end{equation}
    Since $v_n$ is a unit in $\widehat{E(n)}$ and $u^{-1}$ is a unit in $E_n^{hGal}$, Theorem \ref{theo cofiber seq for thh to log thh} provides the equivalences:
\begin{equation*}
       \thh(\widehat{E(n)}) \xrightarrow{\simeq} \thh(\widehat{E(n)} \mid v_n)  \textup{\ \ and \ }\thh(E_n^{hGal}) \xrightarrow{\simeq} \thh(E_n^{hGal} \mid u^{-1}).
\end{equation*}
Using these equivalences together with Theorem \ref{theo root adj is log thh etale }, we obtain that the following canonical map is an equivalence.
\[\thh(\widehat{E(n)}) \wdg_{\sphpl[\sigma_{2(p^n-1)}]} \sphpl[\sigma_{2}] \xrightarrow{\simeq } \thh(E_n^{hGal})\]
This, together with \eqref{eq  thh basechange of jw spectrum}, provides the desired result. 
\end{proof}
\begin{theo}\label{theo thh basechange for morava e theory}
The canonical map: 
\[\thh(\widehat{E(n)})\wdg_{\widehat{E(n)}} E_n \xrightarrow{\simeq_p} \thh(E_n),\]
is an equivalence after $p$-completion.
\end{theo}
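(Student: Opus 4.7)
The plan is to reduce the statement to Theorem~\ref{theo thh of galois fixed points} and a THH base-change statement along the spherical Witt vector extension $\sphp \to \sphwpn$, which we will justify via the étaleness of this map in Lurie's sense.

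First I would exploit the decomposition $E_n \simeq \sphwpn \wdg_{\sphp} E_n^{hGal}$ of Proposition~\ref{prop smash splitting for morava e theory} to rewrite the left-hand side. Since $E_n$ is an $E_\infty$ $E_n^{hGal}$-algebra via this decomposition, we may base-change the equivalence of Theorem~\ref{theo thh of galois fixed points} along $E_n^{hGal} \to E_n$ to obtain
\[
\thh(\widehat{E(n)}) \wdg_{\widehat{E(n)}} E_n \;\simeq\; \thh(E_n^{hGal}) \wdg_{E_n^{hGal}} E_n \;\simeq\; \sphwpn \wdg_{\sphp} \thh(E_n^{hGal}),
\]
where the last equivalence substitutes the smash splitting for $E_n$.

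Next I would compare this with $\thh(E_n)$. Applying Proposition~\ref{prop thh of relative smash product} to the $E_\infty$ $\sphp$-algebra decomposition $E_n \simeq E_n^{hGal} \wdg_{\sphp} \sphwpn$ gives
\[
\thh(E_n) \;\simeq\; \thh(E_n^{hGal}) \wdg_{\thh(\sphp)} \thh(\sphwpn).
\]
Combining this with the previous paragraph, the theorem reduces to showing that the natural map
\[
\sphwpn \wdg_{\sphp} \thh(E_n^{hGal}) \;\longrightarrow\; \thh(\sphwpn) \wdg_{\thh(\sphp)} \thh(E_n^{hGal})
\]
is a $p$-equivalence. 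Smashing the THH base-change map $\sphwpn \wdg_{\sphp} \thh(\sphp) \to \thh(\sphwpn)$ with $\thh(E_n^{hGal})$ over $\thh(\sphp)$ produces exactly this map, so it suffices to check that $\sphwpn \wdg_{\sphp} \thh(\sphp) \to \thh(\sphwpn)$ is a $p$-equivalence.

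The main obstacle is therefore establishing THH base-change along $\sphp \to \sphwpn$ after $p$-completion. By Lurie's theory of spherical Witt vectors \cite[Example 5.2.7]{lurie2018ellipticII}, $\sphwpn$ is the canonical $\sphp$-thickening of the finite étale extension $\fp \to \fpn$, so in particular $\sphp \to \sphwpn$ is formally étale and faithful; equivalently, it is a faithful Galois extension of $E_\infty$-rings with Galois group $\text{Gal}(\fpn \mid \fp)$. The required base-change is then a standard consequence of étale (equivalently, Galois) descent for $\thh$ along such extensions; in our precise setting it is an instance of the descent results of Mathew~\cite{mathew2017thhbasechng}. Assembling these ingredients yields the desired $p$-complete equivalence.
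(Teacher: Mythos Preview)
Your approach is essentially the paper's: both reduce via Proposition~\ref{prop smash splitting for morava e theory} and Theorem~\ref{theo thh of galois fixed points} to THH base-change along $\sphp \to \sphwpn$ after $p$-completion. The paper handles this last step by an explicit diagram comparing $\thh$ to $\thh^{\sphp}$, invoking \cite[Lemma~5.20]{mao2020perfectoid} for the relevant $\sph/p$-equivalences together with the symmetric monoidality of $\thh^{\sphp}$; you instead appeal to étale/Galois descent via \cite{mathew2017thhbasechng}. One caution on your route: Mathew's paper is precisely about \emph{when} THH base-change holds for Galois extensions, and exhibits examples where it fails, so the bare citation is imprecise. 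What makes your case go through is that $\sphp \to \sphwpn$ is étale in Lurie's sense (since $\zp \to W(\fpn)$ is classically étale and $\pi_*$ base-changes, cf.~\eqref{eq sph witt flatness}), not merely Galois; you should point to the étale base-change statement specifically, or else reproduce the paper's more self-contained argument through relative THH and Mao's lemma.
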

\begin{proof}
The first equivalence below follows by  Proposition \ref{prop smash splitting for morava e theory}. 
 and the second follows by Theorem \ref{theo thh of galois fixed points}.
 \begin{equation*}
    \begin{split}
        \thh(\widehat{E(n)}) \wdg_{\widehat{E(n)}} E_n &\simeq \thh(\widehat{E(n)})\wdg_{\widehat{E(n)}}( E_n^{hGal} \wdgp \sphwpn) \\
       & \simeq \thh(E_n^{hGal}) \wdgp \sphwpn
    \end{split}
\end{equation*}

Therefore, it is sufficient to show that the canonical map 
\[\thh(E_n^{hGal}) \wdgp \sphwpn \to \thh(E_n^{hGal} \wdgp \sphwpn)\]
is an equivalence after $p$-completion. This follows by the following canonical diagram of $\sph/p$-equivalences. 
\begin{equation*}
    \begin{tikzcd}
    \thh(E_n^{hGal}) \wdgp \sphwpn \ar[r] \ar[d,"\simeq_p"] &\thh(E_n^{hGal} \wdgp \sphwpn) \ar[d,"\simeq_p"] \\
    \thh^{\sphp}(E_n^{hGal}) \wdgp \sphwpn \ar[r] \ar[d,"\simeq_p"] & 
    \thh^{\sphp}(E_n^{hGal} \wdg_{\sphp} \sphwpn) \\
    \thh^{\sphp}(E_n^{hGal}) \wdgp \thh^{\sphp}(\sphwpn) \ar[ru,"\simeq"]&
    \end{tikzcd}
\end{equation*}
The right hand vertical map and the upper left  vertical map are $\sph/p$-equivalences due to \cite[Lemma 5.20]{mao2020perfectoid}. The fact that the lower left vertical map is an $\sph/p$-equivalence follows by \cite[proof of Lemma 5.20]{mao2020perfectoid} and the fact that the composite $\sphwpn \to \thh(\sphwpn) \to \sphwpn$ is an equivalence. This shows that the upper horizontal map is an $\sph/p$-equivalence proving the theorem. 
\end{proof}

\bibliographystyle{amsalpha}
{\footnotesize \bibliography{references}}
\end{document}